\DeclareMathAlphabet{\mathpzc}{OT1}{pzc}{m}{it}
\numberwithin{equation}{section}
\newtheorem{thm}{Theorem}[section]
\newtheorem{lemma}[thm]{Lemma}
\newtheorem{prop}[thm]{Proposition}
\newtheorem{cor}[thm]{Corollary}
\newtheorem{hyp}[thm]{Hypothesis}
\theoremstyle{definition}
\newcommand{\defeq}{\stackrel{\rm{def}}{=}}
\def \msca {\operatorname{m}_{sc}}
\renewcommand{\Re}{\operatorname{\rm Re}\nolimits}
\renewcommand{\Im}{\operatorname{\rm Im}\nolimits}
\def \rank {\operatorname{rank}}
\def \supp {\operatorname{supp}}
\def \sgn {\operatorname{sgn}}
\def \supp {\operatorname{supp}}
\def \tr {\operatorname{tr}}
\def \Vol {\operatorname{Vol}}
\def \vol {{\rm vol}}
\def \fn {\mathfrak n}
\def \restrict {\upharpoonright}
\def \Real {{\mathbb R}}
\def \Sphere {\mathbb{S}}
\def \Complex {\mathbb{C}}
\def \Natural {{\mathbb N}}
\def \Sphere {{\mathbb S}}
\def \U+ {U_+}
\def \Integers {{\mathbb Z}}
\def \pr {\mathpzc{p}}
\title
 [Resonant rigidity in even dimensions]
{
Resonant rigidity for  Schr\"odinger operators in even dimensions
}
   \author { T.J. Christiansen}
\keywords{Schr\"odinger operator, resonance, scattering theory, inverse problem}
\address{Department of Mathematics,
University of Missouri,
Columbia, Missouri 65211, USA} 
\email{christiansent@missouri.edu}
\begin{document}

\begin{abstract}
This paper studies the resonances of 
Schr\"odinger operators with bounded, compactly supported,
real-valued potentials on $\Real^d$, where the dimension $d$ is even.  
If the potential $V$ is non-trivial and $d\not =4$, then the
meromorphic continuation of the resolvent of the  Schr\"odinger
operator has infinitely many poles, with
a quantitative lower bound on their density. 
A somewhat weaker statement holds if $d=4$. We 
prove several inverse-type results.  If the 
meromorphic continuations of the resolvents of two Schr\"odinger
operators $-\Delta +V_1$ and $-\Delta +V_2$ have the same poles,
$V_1,\;V_2\in L^\infty_c(\Real^d;\Real)$,
$k\in \Natural$ and if $V_1\in H^k(\Real^d;\Real)$, then $V_2\in H^k$ as well.  
Moreover, we prove that certain sets of isoresonant potentials are compact.
We also
show that
the poles of the resolvent for a smooth potential
 determine the heat coefficients
and that the (resolvent) resonance sets of two  potentials
in $L^\infty_c(\Real^d;\Real)$ cannot
differ by a nonzero finite number of elements away from $0$.
\end{abstract}

\maketitle
\section{Introduction}

%{\em note:}
%I think I'd like to introduce the notion of the ``extended resonance set''-- 
%including poles for eigenvalues and for resonances.  The problem is that
%without it whenever $0$ is an eigenvalue, it is also a ``resonance''-- leaving
%it hard to distinguish between the two.  Alternately, one can add something
%about the dimension of range of the singular part of $R_V$ at $0$ minus the 
%dimension of the the $L^2$ null space at $0$

This paper  proves some results about 
resonances of the Schr\"odinger operator $-\Delta +V$ on $\Real^d$ 
when $d$ is even and the potential $V\in L^\infty_c(\Real^d;\Real)$.  For example, we show that if 
$V\in L^\infty_c(\Real^d;\Real)$
is nontrivial and $d\not =4$ then $-\Delta +V$ has infinitely many 
(resolvent) resonances.  If $d=4$, then either $0$ is a resonance of 
$-\Delta+V$, or there are infinitely many resonances.  
Hence, one could say that this demonstrates the 
resonant rigidity of the $0$ potential among all potentials in $L^\infty_c(\Real^d;\Real)$.
If $V_1,V_2\in L^\infty_c(\Real^d;\Real)$ have the same 
resolvent resonance
set, including multiplicities,
 and if $V_1\in H^k(\Real^d) $ for some $k\in \Natural$, then $V_2\in H^k(\Real^d)$ as well.  These
results are inspired by analogous results of Smith and Zworski \cite{sm-zw} 
in  odd dimension $d\geq 3$.  In addition, we show that if
$ V_1,\; V_2\in C_c^{\infty}(\Real^d;\Real)$ have 
the same resonances, including multiplicities, then they 
have the same heat coefficients.  
The compactness of the set of potentials in $L^\infty_c(\Real^2;\Real)$
 with support in a fixed compact set
and  having the same poles
as a fixed potential $V_0\in C_c^{\infty}(\Real^2;\Real)$ then 
follows rather directly
by results of \cite{bruning, donnelly}.
 There is 
 a weaker result in higher dimensions.  See \cite{hi-wo} for analogous 
results in dimension $d=1,3$.  
As a whole, these results can be interpreted as saying something about the 
rigidity of the set of potentials $V\in L^\infty_c(\Real^d;\Real)$ having
the same resonances.

%The proofs use  the heat trace and some 
%results of \cite{sm-zw} on the behavior of the heat trace as $t\downarrow 
%0$.  Moreover, the paper uses both 
%techniques from \cite{sm-zw}, 
%and  from S\'a Barreto's paper
%\cite{SBeven}, which studied the existence
%of resonances when $V\in C_0^{\infty}(\Real^d;\Real)$,
% $d\geq 4$ even. 
% The paper \cite{SBeven} uses in a central way that for 
%$V\in C_0^\infty(\Real^d;\Real)$, the logarithmic derivative
%of the determinant of the scattering matrix has a full asymptotic expansion as $\lambda \rightarrow
%\infty$, $\lambda \in \Real$.  This is not available for potentials
%$V$ which are merely in $L^\infty_0(\Real^d;\Real)$.

For Schr\"odinger operators on $\Real^d$, the manifold on which the 
resonances lie is determined by the parity of the dimension $d$.
Set $R_V(\lambda)=(-\Delta+V-\lambda^2)^{-1}$ when $0<\arg \lambda<
\pi$.  Then if $d$ is odd $R_V$ has a meromorphic continuation, as an operator
from $L^2_c(\Real^d)$ to $L^2_{\operatorname{loc}}(\Real^d)$, to the complex plane.
If $d$ is even, the continuation is to $\Lambda$, the logarithmic
cover of $\Complex \setminus \{0\}$.  The poles of this continuation to 
$\Lambda$
are the nonzero (resolvent) resonances. 
 Here we explicitly include as resonances
poles 
corresponding to eigenvalues and lying in the physical
space $\{ \lambda: 0<\arg \lambda <\pi\}$,
 although conventions differ on this.

The following theorem provides a 
quantitative lower bound on the number of resonances for a Schr\"odinger
operator in even dimensions.  We describe a point $\lambda\in \Lambda$
by specifying its norm $|\lambda|$ and argument $\arg \lambda$, where
we do not identify points whose arguments differ by a nonzero integral
multiple of $2\pi$.   For Hypothesis \ref{hy:H1}, which is a hypothesis about the nature of the singularity of the resolvent at the origin (if it is unbounded
 there), 
see Section \ref{ss:H1}.  We remark
that Hypothesis \ref{hy:H1} holds generically.
  The definition of the multiplicity of
a nonzero pole of the resolvent is given in (\ref{eq:multresolvent}).
\begin{thm}\label{thm:infinitelymany} Let $d$ be even.  
Let $V\in L^{\infty}_c(\Real^d;\Real)$ and suppose $V\not \equiv 0$.
%If $d=4$, suppose in addition that $-\Delta +V$ does not have $0$ as a resonance.
If $d=4$, suppose in addition that Hypothesis \ref{hy:H1} holds.
Then  $-\Delta +V$ has infinitely many (resolvent) resonances.  In fact, 
set $N(r)$ to be the number of poles of the resolvent on $\Lambda$,
counted with multiplicity, that have $1/r<|\lambda|<r$ and 
$|\arg \lambda|<\log r$.  Then, for any $\epsilon >0$,
$$\lim \sup_{r\rightarrow \infty}
\frac{N(r)}{(\log r)^{1-\epsilon}}=\infty.$$
\end{thm}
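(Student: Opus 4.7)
I would argue by contradiction: suppose that for some $\epsilon>0$ there is a constant $C$ with $N(r)\le C(\log r)^{1-\epsilon}$ for all large $r$, and aim to force $V\equiv 0$.

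The first ingredient is a holomorphic function on $\Lambda$ whose nonzero zeros (with multiplicity) encode the resonances. A standard choice is a regularized Fredholm determinant $D(\lambda)$ built from $V$ and $R_0(\lambda)=(-\Delta-\lambda^2)^{-1}$ meromorphically continued to $\Lambda$. The analytic properties I would need are (a) a global upper bound of the rough form $\log|D(\lambda)|\le C_V|\lambda|^d(1+|\arg\lambda|)$ on $\Lambda$, obtained from Schatten-class estimates on $VR_0(\lambda)$ together with the explicit form of the free resolvent kernel in even dimensions (Hankel-function factors, whose logarithmic structure is responsible for the sheet dependence); and (b) sheet-to-sheet asymptotic expansions of $D$ whose leading coefficients involve moments of $V$.

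The second ingredient is a Jensen-type estimate. Applied to $D$ on the slit region $\{\lambda\in\Lambda: 1/r<|\lambda|<r,\;|\arg\lambda|<\log r\}$, the global upper bound from (a) together with the assumption $N(r)\le C(\log r)^{1-\epsilon}$ lets me factor out the zeros and obtain a strengthened upper bound on $|D(\lambda)|$ at typical interior points, of the rough form $\log|D(\lambda)|\le C'|\lambda|^d(\log r)^{1-\epsilon/2}$. The point is that the exponent of $\log r$ here is strictly below $1$.

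The final and crucial step is to exhibit test points $\lambda_*$ on a deep sheet, with $|\arg\lambda_*|$ comparable to $\log r$ and $|\lambda_*|$ comparable to $r$, at which $\log|D(\lambda_*)|\ge c(V)\,|\lambda_*|^d\log r$ for some positive constant $c(V)$, using that $V\not\equiv 0$. Comparing with the strengthened upper bound from the previous step yields a contradiction for large $r$. The main obstacle will be precisely this lower-bound step: producing a nonvanishing contribution of $V$ to the sheet-to-sheet asymptotics of $D$. This is where the dichotomy $d\neq 4$ versus $d=4$ enters, since in dimension four the naive leading sheet-to-sheet moment vanishes and one must instead invoke Hypothesis \ref{hy:H1}, which controls the singularity of $R_V(\lambda)$ at $\lambda=0$, to recover a usable lower bound.
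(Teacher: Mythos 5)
Your proposal takes a genuinely different route from the paper, and it has a gap at precisely the step you yourself flag as the ``main obstacle'': the lower bound on a regularized determinant at deep sheets of $\Lambda$. To run your Jensen-type argument you would need something like $\log|D(\lambda_*)| \geq c(V)\,|\lambda_*|^d\log r$ at test points with $|\arg\lambda_*|\sim\log r$, and there is no clear way to obtain such a bound for $V$ merely in $L^\infty_c(\Real^d;\Real)$. The sheet-to-sheet shift in even dimensions is $R_0(e^{ik\pi}\lambda)=R_0(\lambda)-ik\,T(\lambda)$, and extracting a nonvanishing ``moment'' of $V$ from it with error control on deep sheets would essentially require a Born-type expansion whose remainder is $O(\lambda^{-N})$ for all $N$; that is available only for smooth $V$. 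In fact the high-energy estimate the paper establishes for rough $V$ (Theorem \ref{thm:derivasmpt}) has only an $O(\lambda^{d-7/2})$ error, which would be far too weak to isolate a $\log r$ gain on deep sheets. So as written, the lower-bound step would fail, and you should not expect to recover it without substantial new input. Your description of the role of Hypothesis \ref{hy:H1} in $d=4$ is also not quite right: its purpose in the paper is to control the behavior of $\det S$ near $\lambda=0$ (Lemma \ref{l:near0}), which feeds into Theorem \ref{thm:samepoles}, rather than to repair a deep-sheet lower bound; the $d=4$ case additionally requires a Benguria--Yarur-type argument to produce a negative eigenvalue.

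The paper's proof sidesteps the lower-bound problem entirely. It is also by contradiction, but the chain of implications is different. Sparse resonances put one in the hypotheses of Theorem \ref{thm:samepoles} with $V_1=V$, $V_2\equiv 0$, which forces $\det S_V\equiv 1$; the mechanism there is not Jensen but Phragm\'en--Lindel\"of-type control of an entire function of $z=\log\lambda$ of exponential type that decays on the real axis (via \cite[Corollary 5.1.14]{boas}), using Theorem \ref{thm:derivasmpt} and Lemma \ref{l:near0} as the real-axis input. From $\det S_V\equiv 1$, Theorem \ref{thm:Hm} gives $V\in H^k$ for every $k$, hence $V\in C_c^\infty$. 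Then the Birman--Krein formula (\ref{eq:b-k}) collapses to a bounded smooth function of $t>0$, while the small-$t$ heat expansion has $C_2(V)=\alpha_{2,d}\int V^2\neq 0$: for $d>4$ this is an immediate contradiction, and for $d=2,4$ it forces a strictly negative eigenvalue. Corollary \ref{c:negativeeigenvalues} then produces explicit poles at $e^{i\pi(k+1/2)}\rho$ for all $k\in\Integers$, giving $N(r)\gtrsim\log r$ and hence the stated divergence. The upshot is that the paper converts the hard deep-sheet lower bound into a soft trace/heat-coefficient nonvanishing, at the cost of first proving the rigidity statement $\det S_V\equiv 1$ and the regularity bootstrap $V\in C_c^\infty$.
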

S\'a Barreto \cite{SBeven} ($d\geq 4$) and Chen \cite{chen} ($d=2$)
proved a  related,  stronger,
bound for $V\in C_c^\infty(\Real^d;\Real)$, $V\not \equiv 0$.  
They showed $\lim \sup _{r\rightarrow \infty}
\frac{N(r)}{(\log r)(\log \log r)^{-p}}=\infty$ for any $p>1$,
and did not require Hypothesis \ref{hy:H1} when $d=4$.  Each of these lower bounds
is much smaller than the upper bounds  known to hold in even dimensions
 \cite{intissar, vodeveven, vodev2},
and the lower bounds  which are known to hold generically, see 
\cite{ch-hi2}.

Interestingly, in {\em odd} dimensions $d> 3$, the result 
of \cite{sm-zw} analogous
to Theorem \ref{thm:infinitelymany}
 is that any non-trivial, real-valued potential
$V$ must have at least one resonance, although in
odd dimensions it is known that any non-trivial, smooth, real-valued
potential must have infinitely many, and 
there is a quantitative lower bound (e.g. \cite{SaB01} and references
therein).  
In dimension $3$, any nontrivial
$V\in L^{\infty}_c(\Real^3;\Real)$ must have infinitely many (\cite{sm-zw}), and in 
dimension $d=1$, asymptotics of the 
resonance-counting function are known \cite{froese, simon, zworski1d}.
  Both 
here and in \cite{sm-zw}, it is important that we require 
$V$ to be real-valued, since in dimension at least $3$ there are 
examples of complex-valued potentials with no resonances,
and in dimension $d=2$ no
resonances away from $0$ \cite{autin,chex,iso}.  See \cite{sm-zw} and references therein for further results in odd dimensions.

 It is important to emphasize that 
in the assumptions of Theorems \ref{thm:heatcoefficients} and 
\ref{thm:inverse} and elsewehere 
we use the notions of multiplicity of a resonance
defined in this paper.  The multiplicity of points of $\Lambda$ as
a resonance is rather standard and is recalled in Section 
\ref{s:multiplicities}.
However, the notion of the multiplicity of $0$ as a resonance is a 
rather subtle point.  The one we use here might more 
properly be called a normalized or weighted multiplicity, and 
can be found in Section \ref{ss:H1}.
    For 
other purposes a different notion of multiplicity of $0$ as a resonance
than that of this paper
may be preferable.  

The preliminary steps in the proof of Theorem \ref{thm:infinitelymany} 
give rather directly
 some results about the heat coefficients for smooth potentials
in even dimensions.  Recall that here and elsewhere we include poles of
the resolvent corresponding to eigenvalues of $-\Delta +V$  among
the resonances.
\begin{thm}\label{thm:heatcoefficients}  Let $d$ be even, and 
let $V_1,\; V_2\in C_c^\infty(\Real^d;\Real)$.  Suppose
$V_1$ and $V_2$ have the same resolvent
resonances, including multiplicities.  
%If $d=4$,  assume Hypothesis \ref{hy:H1} holds for both $V_1$ and $V_2$.
%assume $0$ is not a resonance.  
Then $V_1$ and $V_2$ have the same heat coefficients.
%Then if $d\geq 4$
%$V_1$ and $V_2$ have the same heat coefficients except, perhaps,
%for the the first; that is, it is possible that $\int V_1(x)dx$ is
%not the same as $\int V_2(x) dx$.  If $d=2$ $V_1$ and $V_2$ have the 
%same heat coefficients except, perhaps, the second ????.
\end{thm}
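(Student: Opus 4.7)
The plan is to attach to each smooth potential a scattering-style entire function whose zeros on the logarithmic cover $\Lambda$ are exactly the resonances (with multiplicity), then to read off both (a) the heat coefficients and (b) the information ``same resonances'' from this single object, and compare.

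First, I would work with a regularized Fredholm determinant $D_V(\lambda)=\det_{p}(I+V R_0(\lambda))$ for a suitable integer $p=p(d)$, where $R_0(\lambda)$ is the free resolvent. For $V\in C_c^\infty(\Real^d;\Real)$, $D_V$ is holomorphic on $\Lambda$ (after pulling out an explicit $\lambda=0$ factor controlled by Hypothesis~\ref{hy:H1}) with order of growth at most $d$ on each finite sector, and its zeros, counted with multiplicity, are exactly the nonzero resonances of $-\Delta+V$ together with the normalized contribution at $0$. This construction, and the corresponding growth estimates, are the ``preliminary steps'' in the proof of Theorem~\ref{thm:infinitelymany} alluded to in the introduction.

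Next, assume $V_1,V_2\in C_c^\infty$ share the same resonance set with multiplicities. Then $D_{V_1}/D_{V_2}$ extends to a nonvanishing holomorphic function on $\Lambda$, and Hadamard-type considerations, combined with the sector-by-sector growth bounds from the first step, force
\begin{equation*}
\log D_{V_1}(\lambda)-\log D_{V_2}(\lambda)=P(\lambda)
\end{equation*}
on the physical sheet, where $P$ is a polynomial of degree at most $d$ (plus, on non-physical sheets, controlled terms that are polynomial in $\lambda$ and $\log\lambda$ but irrelevant to what follows).

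The crucial step is identifying the heat coefficients of $V$ with coefficients in the large-$|\lambda|$ asymptotics of $\log D_V(\lambda)$ along a physical ray. Using
\begin{equation*}
\log D_V(\lambda)=\sum_{k\geq p}\frac{(-1)^{k+1}}{k}\operatorname{tr}\bigl((V R_0(\lambda))^k\bigr)
\end{equation*}
together with the classical semiclassical/Weyl expansion of $\operatorname{tr}((V R_0(\lambda))^k)$ in negative powers of $\lambda$, one gets
\begin{equation*}
\log D_V(\lambda)\sim Q_V(\lambda)+\sum_{j\geq 1}\gamma_j(V)\,\lambda^{d-2j},\qquad |\lambda|\to\infty,
\end{equation*}
where $Q_V$ is polynomial of degree at most $d$ and, up to universal nonzero constants, $\gamma_j(V)$ is the $j$-th heat coefficient of $-\Delta+V$. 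Equivalently, this is the Mellin/Laplace dual of the small-$t$ expansion of $\operatorname{tr}(e^{-t(-\Delta+V)}-e^{t\Delta})$, obtained by rotating the spectral parameter contour; it is most transparent via the Birman--Krein identity relating the relative heat trace to the scattering phase, which in turn is $\tfrac{1}{2\pi i}\log$ of a ratio of $D_V$'s on adjacent sheets.

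Combining: subtracting the asymptotic expansions for $V_1$ and $V_2$, the left-hand side equals $P(\lambda)$ (a polynomial), while on the right-hand side one obtains $(Q_{V_1}-Q_{V_2})(\lambda)+\sum_{j\geq 1}(\gamma_j(V_1)-\gamma_j(V_2))\lambda^{d-2j}$. Uniqueness of asymptotic expansions then forces $\gamma_j(V_1)=\gamma_j(V_2)$ for every $j\geq 1$, which is exactly the equality of all heat coefficients. The main obstacle is the third step: making the asymptotic expansion of $\log D_V$ in inverse powers of $\lambda$ rigorous (including the error control beyond the polynomial part) and correctly matching the coefficients $\gamma_j(V)$ with the standard heat invariants, since on $\Lambda$ one must choose a ray that avoids the positive real axis (where the scattering phase jumps) and keep track of the contribution from eigenvalues of $-\Delta+V$.
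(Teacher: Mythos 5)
Your plan — encode the resonances as zeros of a scattering/Fredholm determinant, compare the two determinants by Hadamard-type factorization, and read off the heat coefficients from the large-$\lambda$ expansion via Birman--Krein — is in the same general spirit as the paper's. But there are two genuine gaps, and the first one is exactly the point the paper spends most of its effort on.

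The central difficulty in even dimensions is that $D_V(\lambda)$ (or $\det S_V(\lambda)$) is not entire on $\Complex$: it lives on the logarithmic cover $\Lambda$. So ``Hadamard-type considerations'' do not directly force $\log D_{V_1}(\lambda)-\log D_{V_2}(\lambda)=P(\lambda)$ with $P$ a polynomial of degree $\leq d$ in $\lambda$. The paper works instead with $F(z)=\det S_1(e^z)/\det S_2(e^z)$, which \emph{is} meromorphic on $\Complex$, builds canonical products in $z$ (Proposition~\ref{prop:gest}), and what comes out of the Hadamard/Cartan argument there is only the bound $|g(z)|\leq C\exp(C|z|)$ — an exponential-type bound in $z$, not a polynomial in $\lambda$. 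To convert this into a usable conclusion the paper then needs \emph{both} the high-energy asymptotics (Theorem~\ref{thm:derivasmpt}, a new result precisely because $V$ is only $L^\infty_c$ at that stage) and the low-energy behavior near $\lambda=0$ (Lemma~\ref{l:near0}), to show the auxiliary function $G(z)$ decays exponentially on $\Real$ and hence vanishes by a Phragm\'en--Lindel\"of theorem; and even after that it only obtains $\det S_1(\lambda)=e^{iq(\lambda)}\det S_2(\lambda)$ with $q$ a polynomial in $\lambda^{d-4},\lambda^{d-2}$, and a further, genuinely even-dimensional idea (Lemma~\ref{l:klimit}: $\det S(\rho e^{ik\pi})\to 1$ as $k\to\infty$) is required to force $e^{iq}\equiv 1$. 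Your proposal skips this entire layer; the statement ``growth of order at most $d$ on each finite sector'' does not on its own yield a polynomial relation, because the number of sheets is unbounded.

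There is also a second gap in your final deduction. Even granting that $\log D_{V_1}-\log D_{V_2}$ equals a polynomial $P(\lambda)$ of degree $\leq d$, matching with $\sum_{j\geq 1}(\gamma_j(V_1)-\gamma_j(V_2))\lambda^{d-2j}$ only forces $\gamma_j(V_1)=\gamma_j(V_2)$ for the strictly negative powers, i.e.\ for $j>d/2$; the coefficients with $j\leq d/2$ can be absorbed into $P$ (and into the difference $Q_{V_1}-Q_{V_2}$), so you do not get \emph{all} heat coefficients. You also flag, but do not resolve, the contribution from the bound states and the threshold: the Birman--Krein formula~(\ref{eq:b-k}) contains both the eigenvalue sum and the term $\beta(V,d)$, and equality of heat coefficients requires matching those as well. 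The paper's definition of the (weighted) multiplicity of $0$ as a resonance is engineered exactly so that ``same resonances including multiplicities'' forces those contributions to cancel. The paper then avoids the absorption problem entirely: it shows $\tr(e^{-tP_1}-e^{-tP_2})\equiv 0$ for all $t>0$ (not merely asymptotically), from which the equality of \emph{all} heat coefficients is immediate. To repair your argument you would need to replace the asymptotic-matching step by this exact trace identity, and to replace the Hadamard step by the full analysis of Sections~\ref{s:asymptotic}--\ref{s:samepoles}.
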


A similar result holds in odd dimension $d$.  Two 
potentials in $C_c^\infty(\Real^3;\Real)$ with the same resonance set which
does not include $0$ have
the same heat coefficients, except, possibly, for the first--that is, the
integral of $V$.  If $d\geq 5$, the first two heat coefficients may differ
if $0$ is an eigenvalue of both Schr\"odinger operators.
This follows from  bounds on the determinant of the 
scattering matrix and the number of 
resonances, Hadamard's factorization theorem, a trace formula,
 and the behavior of the determinant of the
scattering matrix near $0$.  For a different proof  in dimensions
$d=1$ and $d=3$, see \cite{hi-wo}.

For $R>0$, set $B(0,R)=\{x\in \Real^d:|x|<R\}$.
Fix $R_0>0$, and let $V_0\in C_c^\infty(\Real^d;\Real)$ satisfy $
\supp V_0 \subset \overline{B}(0,R_0)$.
  Set 
\begin{multline}\label{eq:isodef}
Iso(V_0,R_0)=\{ V\in C^\infty_c(\Real^d;\Real): \; \supp V \subset \overline{B}(0,R_0)\;
\text{ and
$-\Delta+V$ and $-\Delta+V_0$}\\ \text{have the same resolvent resonances, including multiplicities}\}
\end{multline}
and, for $c_0>0$, $s\geq 0$, 
$$Iso(V_0,R_0, s,c_0)= \{ V\in Iso(V_0,R_0):\; \| V\|_{H^s}\leq c_0\}.$$
%For $R>0$, set $B(0,R)=\{x\in \Real^d:|x|<R\}$.
%We note that a consequence of our Theorem \ref{thm:Hm} is
%that 
%\begin{multline*}
%Iso(V_0,R_0)=\{ V\in L^\infty_0(\Real^d;\Real): \; \supp V \subset \overline{B}(0,R_0)\;
%\text{ and
%$-\Delta+V$ and $-\Delta+V_0$}\\ \text{have the same resolvent resonances, including multiplicities}\}.
%\end{multline*}
\begin{thm}\label{thm:inverse} 
Let $V_0\in C_c^\infty(\Real^d;\Real)$, with 
$\supp V_0\subset B(0,R_0)$.  If $d=2$, then $Iso(V_0,R_0)$ is compact in the 
topology of $C^\infty(B(0,R_0))$.  If $d\geq 4 $ is even and $s>d/2-2$, then
$Iso(V_0,R_0,s,c_0)$ is compact in $C^\infty(B(0,R_0))$. 
% If
%$d=4 $ and $s>0$, and if $V_0$ satisfies Hypothesis \ref{hy:H1}, then 
%$Iso(V_0,R_0,s,c_0)$ is compact in $C^\infty(B(R_0,0))$.
\end{thm}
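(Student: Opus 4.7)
The plan is to combine Theorem \ref{thm:heatcoefficients} with the Sobolev-norm induction for potentials with prescribed heat invariants due to Br\"uning \cite{bruning} and Donnelly \cite{donnelly}, and then close the argument with a continuity principle for resonances under $C^\infty$ convergence of the potential.

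First, every $V\in Iso(V_0,R_0)$ lies in $C_c^\infty(\Real^d;\Real)$ with support in $\overline{B}(0,R_0)$, so Theorem \ref{thm:heatcoefficients} applies and each such $V$ has the same heat coefficients as $V_0$.  Each heat coefficient $a_j$ (for $j\geq 2$) is an integral of a universal polynomial in $V$ and its derivatives whose top-derivative term, after integration by parts on $\Real^d$, is a nonzero multiple of $\int|\nabla^{j-2}V|^2\,dx$.  Thus knowing all the heat coefficients determines $\|\nabla^{j-2}V\|_{L^2}^2$ for every $j$ modulo polynomial quantities in $V$ involving strictly fewer derivatives, and one can hope to inductively bound $\|V\|_{H^k}$ for every $k$.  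In dimension $d=2$, the induction starts from $\|V\|_{L^2}^2\sim a_2$ and propagates using the Sobolev embedding $H^1(B(0,R_0))\hookrightarrow L^p(B(0,R_0))$ for every $p<\infty$; this is the content of \cite{bruning}.  In dimensions $d\geq 4$, the polynomial remainders are not controlled by the $L^2$ norm alone, but the a priori bound $\|V\|_{H^s}\leq c_0$ with $s>d/2-2$ provides exactly the Sobolev embedding into the $L^p$ spaces needed to initiate Donnelly's induction \cite{donnelly}.  Either way, I obtain uniform $H^k$-bounds on the respective isoresonant set for all $k$, and the Rellich--Kondrachov theorem then yields precompactness in $C^\infty(B(0,R_0))$.

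It remains to verify closedness.  Suppose $V_n\to V$ in $C^\infty(B(0,R_0))$ with each $V_n\in Iso(V_0,R_0)$; then $V$ is smooth, real-valued, and supported in $\overline{B}(0,R_0)$.  Fixing a cutoff $\chi\in C_c^\infty(\Real^d)$ equal to $1$ on $\overline{B}(0,R_0)$, the standard analytic Fredholm construction provides the meromorphic continuation of $\chi R_V(\lambda)\chi$ to $\Lambda$ as the inverse of a compact perturbation of the identity built from $R_0(\lambda)$ and $V$, and uniform convergence $V_n\to V$ gives operator-norm convergence of these perturbations on compact subsets of $\Lambda$ away from the poles of the limit.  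A Hurwitz-type argument applied to a scalar Fredholm-type determinant whose zero set with multiplicities agrees with the nonzero resonance set (as recalled in Section \ref{s:multiplicities}) then shows that on any bounded subset of $\Lambda$ the resonance sets of the $V_n$ converge to that of $V$ with multiplicities, so $V\in Iso(V_0,R_0)$.  The most delicate point, and where I expect the real obstacle to lie, is the multiplicity bookkeeping at $\lambda=0$: this requires the normalized convention of Section \ref{ss:H1} and a separate small-$\lambda$ analysis of the determinant, which must then be interfaced consistently with the Hurwitz argument on the remainder of $\Lambda$.
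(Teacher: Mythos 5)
Your overall strategy is the paper's: use Theorem \ref{thm:heatcoefficients} to get equality of heat coefficients, invoke Br\"uning--Donnelly for precompactness, and run a Hurwitz/Fredholm-determinant argument for closedness of the resonance set. But there are two concrete gaps.

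First, Br\"uning \cite{bruning} and Donnelly \cite{donnelly} prove their compactness theorems for Schr\"odinger operators on \emph{compact Riemannian manifolds}; they do not apply directly on $\Real^d$. The paper therefore passes to a flat torus $M=M(R_0)$ and proves Lemma \ref{l:heatcoeff}, which shows that equality of all heat coefficients on $\Real^d$ for potentials supported in $B(0,R_0)$ is equivalent to equality of all heat coefficients of the corresponding torus operators. Only after this transfer can the results of \cite{bruning,donnelly} be invoked and a $C^\infty(M)$-convergent subsequence extracted. Your sketch invokes the inductive $H^k$ bounds directly on $\Real^d$; as written that step is not justified and the passage to $M$ is a necessary ingredient.

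Second, and more seriously, the multiplicity at $0$. You correctly single this out as the obstacle, but your proposed fix---a small-$\lambda$ analysis of the scattering determinant interfaced with the Hurwitz argument---is not the paper's argument, and it is not clear it can be made to work: the normalized multiplicity of $0$ used here (Section \ref{ss:H1}) is $\fn_0(-\Delta+V)+\beta(V,d)$, where $\fn_0$ is the dimension of the $L^2$ null space and $\beta(V,d)$ the constant from the Birman--Krein formula (\ref{eq:b-k}); this is not a zero/pole count of $\det S$ at $\lambda=0$, so a Hurwitz-type argument does not see it. The paper's actual argument at $0$ is a trace-formula argument. After extracting the limit $V_*$, Lemma \ref{l:isresaway0} shows $V_*$ and $V_0$ have the same nonzero resonances with multiplicities (hence the same negative eigenvalues), and Theorem \ref{thm:samepoles} then gives $\det S_{V_*}\equiv\det S_{V_0}$ on $\Lambda$. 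Plugging into (\ref{eq:b-k}), the difference $\tr(e^{t(\Delta-V_*)}-e^{t(\Delta-V_0)})$ reduces to the constant
$$\fn_0(-\Delta+V_*)+\beta(V_*,d)-\fn_0(-\Delta+V_0)-\beta(V_0,d).$$
On the other hand, since the heat coefficients depend continuously on the potential and $C_l(V_n)=C_l(V_0)$ for all $l,n$, we have $C_l(V_*)=C_l(V_0)$ for all $l$, so this trace difference is $O(t^N)$ for every $N$ as $t\downarrow 0$. A constant that is $o(1)$ vanishes, which is exactly the equality of the normalized multiplicities at $0$. You have not supplied this (or an equivalent) argument, and without it $V_*\in Iso(V_0,R_0)$ is not established.
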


Hislop and Wolf have proved analogs of Theorems \ref{thm:heatcoefficients}
and \ref{thm:inverse} in dimensions $d=1$ and $d=3$, \cite{hi-wo}.
In dimension $1$, some stronger results are
due to Zworski \cite{zwremark}
and Korotyaev \cite{khalfline, kstability,kline}.
 Our proof of Theorem
\ref{thm:inverse} uses results of Br\"uning \cite{bruning}
and Donnelly \cite{donnelly} for isospectral Schr\"odinger operators
on compact Riemannian manifolds, together with 
Theorem \ref{thm:heatcoefficients}.  We remark that again it is 
necessary to assume the potentials are real-valued, as examples
of \cite{autin, iso} give large families of isoresonant complex-valued 
potentials which are not even bounded in $L^\infty$.

%{\em If I leave the heat coefficient result here, change below to put in 
%more references to this result.  also make it a theorem rather than a corollary}

Our proof of Theorem \ref{thm:infinitelymany}
 uses an adaptation of techniques from \cite{sm-zw} and \cite{SBeven}.
The central novel technical results are Theorems
\ref{thm:samepoles} and  \ref{thm:derivasmpt}.
Theorem \ref{thm:samepoles}  gives a  relationship between the 
determinants of two scattering matrices 
if the difference of the sets of their poles is not too big.  
%The analogous, but slightly 
%weaker, result in odd dimensions 
%follows more easily from  the Hadamard factorization theorem.
When combined  with techniques of \cite{sm-zw} or \cite{SBeven},
we shall see that Theorem \ref{thm:samepoles} has a number of corollaries,
among them Theorems \ref{thm:infinitelymany}, \ref{thm:heatcoefficients},
 and  \ref{thm:Hm}.

In the statement of Theorem \ref{thm:samepoles}, for $z\in \Complex$,
$S_j(e^z)$ means we evaluate $S_j$ at the point
in $\Lambda$ having argument $\Im z$ and norm $e^{\Re z}$.
\begin{thm}\label{thm:samepoles}
Let $d$ be even, and  $V_j\in L^{\infty}_c(\Real^d;\Real)$ for $j=1,\;2.$  Set
$P_j=-\Delta+V_j$ and let $S_j(\lambda)$ be the associated scattering 
matrix, unitary for $\lambda>0$. Set
$$F(z)=\frac{\det S_1(e^z)}{\det S_2(e^z)}.$$
Let $\{z_l\}$ denote the distinct poles of $F(z)$, and $M(z_l)$ their 
multiplicities. Suppose that for some $\epsilon_0>0$,
\begin{equation} \label{eq:notmanypoles1}
\sum _{|z_l|<r:\; M(z_l)>0} 1=O(r^{1-\epsilon_0})
\end{equation}
and 
\begin{equation}\label{eq:notmanypoles2}
\sum \frac{M(z_l)}{|z_l|^m}<\infty \; \text{ for some $m\in (0,\infty)$.}
\end{equation}
If the dimension $d=4$, assume in addition that {\em either}
 Hypothesis \ref{hy:H1} holds for both $P_1$ and $P_2$ {\em or} 
both $V_1, \; V_2\in C^\infty_c(\Real^4;\Real)$.
%that $0$ is   not a resonance of either $P_1$ or $P_2$.
Under these hypotheses $F(z)$ has no poles, and 
\begin{equation}\label{eq:detrelation}
\det S_1(\lambda) =
\det S_2(\lambda) \; \text{for all $\lambda \in \Lambda$}.
\end{equation}
%where $c_d= -\pi \vol(\Sphere^{d-1}) $.  If $d=2$, 
%$\det S_1(z)=\det S_2(z)$.
\end{thm}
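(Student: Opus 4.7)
The plan is to work in the logarithmic variable $z=\log\lambda$, which identifies the logarithmic cover $\Lambda$ with $\Complex$. In this picture $F(z)$ is a meromorphic function on $\Complex$, its poles $\{z_l\}$ are discrete points, and the conclusion (\ref{eq:detrelation}) becomes the assertion that $F\equiv 1$ on $\Complex$. The proof will combine three ingredients: (a) a priori upper bounds for $|\det S_j(e^z)|$ in each horizontal strip, coming from a Hadamard-type factorization of $\det S_j$ over its lifted resonance set together with known even-dimensional counting estimates (Intissar, Vodev); (b) a canonical product built from the poles $\{z_l\}$ of $F$, whose order is small thanks to (\ref{eq:notmanypoles1})--(\ref{eq:notmanypoles2}); and (c) unitarity of $S_j(\lambda)$ for $\lambda>0$, giving $|F(z)|=1$ on the real axis.

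First I would establish growth estimates: in any horizontal strip $|\Im z|\leq A$, $\log|\det S_j(e^z)|\leq C_A e^{d\Re z}(1+|\Re z|)$ as $\Re z\to+\infty$, with an analogous bound as $\Re z\to-\infty$ obtained using the symmetry $\lambda\mapsto\lambda^{-1}$ of the scattering determinant together with control of $\det S_j$ near $\lambda=0$. This last point is precisely where the special hypothesis in dimension $d=4$ is needed: either Hypothesis \ref{hy:H1} or $V_j\in C_c^\infty$ is used to rule out an uncontrolled singular factor coming from the zero-energy behavior of the resolvent. Next I would choose an integer $p$ large enough that (\ref{eq:notmanypoles2}) makes the genus-$p$ Hadamard product over $\{z_l\}$ converge, and set $G(z)=\prod_l E_p(z/z_l)^{M(z_l)}$. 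By (\ref{eq:notmanypoles1}) the convergence exponent of $\{z_l\}$ is at most $1-\epsilon_0<1$, so $G$ is entire of order at most $1-\epsilon_0$ and $\log|G(z)|\leq C_\delta|z|^{1-\epsilon_0+\delta}$ for every $\delta>0$. Then $H:=FG$ is entire.

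The crucial step is to show that in every horizontal strip $|\Im z|\leq A$ one has $\log|H(z)|\leq C_{A,\delta}|z|^{1-\epsilon_0+\delta}$: the genuine doubly-exponential-in-$z$ growth that each of $\det S_1$ and $\det S_2$ exhibits on non-physical sheets must cancel in the ratio $F$ once the sparse non-matching resonances have been absorbed into $G$. Granting this, unitarity $|F(z)|=1$ on $\Real$, combined with a Phragm\'en--Lindel\"of argument in each strip, propagates the polynomial bound globally, and Hadamard's theorem for entire functions of order $<1$ forces $H$ to be the canonical product of its own zeros, of the same shape as $G$ up to a constant. In particular $F$ is a ratio of such canonical products, hence $F\equiv c$ for some constant, and the normalizations $|F|=1$ on $\Real_+$ together with $\det S_j(\lambda)\to 1$ as $\lambda\to+\infty$ pin down $c=1$, yielding (\ref{eq:detrelation}).

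The main obstacle is the cancellation claim just above: individually $|\det S_j(e^z)|$ can grow like $\exp(Ce^{d\Re z})$, and one must show the ratio is sub-polynomial after the sparse poles $\{z_l\}$ are removed. This will be done by comparing the Hadamard factorizations of $\det S_1(e^z)$ and $\det S_2(e^z)$ term by term: by hypothesis the factorizations agree outside the sparse set $\{z_l\}$, so the genuinely exponential pieces of the two factorizations match and the surviving contribution in $F\cdot G$ is a finite-order product together with polynomial Hadamard prefactors. Controlling these prefactors, especially near $\lambda=0$ in the delicate case $d=4$, is where the proof will require the most care.
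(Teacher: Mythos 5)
Your overall strategy --- pass to $z=\log\lambda$, separate the sparse poles into a canonical product, and try to show the remainder is constant --- is pointed in a reasonable direction, but two load-bearing steps fail, and the proposal never supplies the ingredients that make the paper's argument go through.

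First, the ``cancellation'' step you flag as the main obstacle is not merely technically delicate; it is false in the form you state it. After absorbing the poles $\{z_l\}$ into a canonical product $G$, the entire function $H=FG$ does \emph{not} acquire polynomial growth. The paper's own Proposition~\ref{prop:gest} is the sharp statement here: writing $F=e^{g}P/Q$ with $P,Q$ canonical products over $\{\overline{z_l}\}$ and $\{z_l\}$, one only gets $|g(z)|\leq C\exp(C|z|)$, i.e.\ exponential --- not polynomial --- type for $g$, and hence doubly exponential growth for $|e^{g}|$. Your proposed fix, ``compare the Hadamard factorizations of $\det S_1(e^z)$ and $\det S_2(e^z)$ term by term,'' does not make sense as written: hypothesis~(\ref{eq:notmanypoles1})--(\ref{eq:notmanypoles2}) constrains the poles of the \emph{ratio} $F$, not the resonance sets of $S_1$ and $S_2$ individually, and $\det S_j(e^z)$ is a meromorphic (not entire) function of $z$ with very large counting function, so it has no genus-controlled Hadamard factorization to compare. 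There is no route from the stated hypotheses to a polynomial bound on $FG$ in strips, so the subsequent Phragm\'en--Lindel\"of and ``order $<1$ Hadamard'' steps have nothing to apply to.

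Second, your normalization $\det S_j(\lambda)\to 1$ as $\lambda\to+\infty$ is incorrect: by the Birman--Krein formula the argument of $\det S_j(\lambda)$ grows like $\lambda^{d-2}$ on the positive real axis whenever $\int V_j\neq 0$, so $\det S_j$ oscillates and has no limit at infinity. The correct normalization is $\det S_j(\lambda)\to 1$ as $\lambda\downarrow 0$, which the paper uses, and even then the proof is not finished: after showing $F'/F$ equals an explicit finite expression, one obtains only $\det S_1(\lambda)=e^{iq(\lambda)}\det S_2(\lambda)$ with $q$ a polynomial in $\lambda^{d-2},\lambda^{d-4}$, and eliminating this exponential factor requires the additional fact (Lemma~\ref{l:klimit}) that $\det S(\rho e^{ik\pi})\to 1$ as $k\to\infty$ for each fixed $\rho>0$ --- a genuinely even-dimensional phenomenon with no analogue in your outline.

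The missing ideas, concretely, are: (i) work with $F'/F$ rather than $F$ itself, since the logarithmic derivative kills the exponential entire factor $g'$ up to Cauchy-estimate control; (ii) the new high-energy asymptotics of $(\det S)'/\det S$ for $V\in L^\infty_c$ (Theorem~\ref{thm:derivasmpt}), without which there is no usable decay of the relevant auxiliary function as $\Re z\to+\infty$; (iii) the corresponding low-energy asymptotics (Lemma~\ref{l:near0}), which is where the $d=4$ hypothesis actually enters; (iv) a Boas-type theorem (exponential-type function decaying exponentially on $\Real$ must vanish) applied to the auxiliary function $G(z)$ that packages these asymptotics; and (v) Lemma~\ref{l:klimit}. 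Unitarity on $(0,\infty)$ alone, which is what your argument leans on, is nowhere near enough.
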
  

The multiplicity $M(z_l)$ used in  Theorem \ref{thm:samepoles} is 
given by 
$$M(z_l)=\min \{ m\in \Natural \cup\{0\}: (z-z_l)^m F(z)\; \text{is analytic
near $z=z_l$}\}.$$

Since, for $x,y\in \Real$, 
\begin{equation}
S_j^*(e^{x-iy})= (S_j(e^{x+iy}))^{-1}
\end{equation}
the condition on the poles of $F$ is  symmetric in $V_1$ and $V_2$,
although it may not appear to be so on first inspection. Moreover,
since $e^0=1$, and the scattering matrices are 
unitary on the real axis, $z=0$ is not a pole of $F$ so there 
is no difficulty with (\ref{eq:notmanypoles2}) at $0$.

%Hypothesis \ref{hy:H1} is a hypothesis about $0$ as a singularity of the 
%resolvent.  It is discussed in Section \ref{ss:H1}.  We note here that
%it holds generically.

 An analog
of Theorem \ref{thm:samepoles}
in the case of $V_2\equiv 0$ and $V_1\in C_c^\infty(\Real^d;\Real)$ is part of the proof of \cite[Theorem 1.1]{SBeven}.
%(We note that the result of \cite{SBeven} does not require the 
%hypothesis that $0$ is not a resonance when $d=4$; we could easily avoid this
%requirement here as well by making stronger hypotheses on the regularity of
%both potentials.) 
 The extension to
arbitrary $V_1,\;V_2 \in C_c^\infty(\Real^d;\Real)$ is fairly straightforward;
 it is the limited
regularity assumed of the $V_j$ which is the delicate
issue to address in the proof of this result, and for this we need
Theorem \ref{thm:derivasmpt}.  The paper \cite{SBeven} uses in a central way that for 
$V\in C_c^\infty(\Real^d;\Real)$, the logarithmic derivative
of the determinant of the scattering matrix has an asymptotic expansion as
 $\lambda \rightarrow
\infty$, $\lambda \in \Real$, with error $O(\lambda^{-N})$ for any $N$,
see e.g. \cite{CdV,gu}.   
This is not available for potentials
$V$ which are merely in $L^\infty_c(\Real^d;\Real)$.  Theorem
\ref{thm:derivasmpt} provides the results needed.

%We remark that Hypothesis \ref{hy:H1}, that $0$ is not a 
%resonance not caused by an eigenvalue, holds generically.  
%It does seem that if $d=4$
%and Hypothesis \ref{hy:H1} does not hold for one (or both) of the $P_j$,
%then there is a 
%relationship similar to 
%(\ref{eq:detrelation})
% between the determinants of $S_1(\lambda)$ and $S_2(\lambda)$.
%However, since at this time it is not clear that there are applications of this
%(more complicated)
%identity, and because the proof is somewhat involved, we do not include
%it here.

Theorem \ref{thm:samepoles} and its corollary Theorem
\ref{thm:Hm} are stated, roughly, as results 
about the number of  points of $\Lambda$ which are poles of the 
determinant of one scattering 
matrix $S_j$ but not of the determinant of the other scattering matrix.  
In Section \ref{s:multiplicities}
 we recall some results about the relation between the poles
of the determinant of the scattering matrix and the poles of the meromorphic
continuation of the cut-off resolvent.  In particular, we note here that
$\det S_j(\lambda)$ cannot have a pole at $\lambda_0$ unless the meromorphic
continuation of the cut-off resolvent, $\chi R_j(\lambda)\chi$ has a pole
at $\lambda_0$.

Theorem \ref{thm:samepoles} has several corollaries, including
Theorems \ref{thm:infinitelymany}, \ref{thm:heatcoefficients},
and \ref{thm:Hm}.  Some of these are inspired by
analogous results of Smith and Zworski \cite{sm-zw} in the case of $d\geq 3$ odd.
\begin{thm}\label{thm:Hm} Let $d$ be even. Suppose $V_1$, $V_2\in L^{\infty}_c(\Real^d;\Real)$
are as in the statement of Theorem \ref{thm:samepoles}, including the 
conditions on the poles of $F$. If $d=4$, 
assume that Hypothesis \ref{hy:H1} holds for $P_1$ and $P_2$.
%$0$ is not a resonance of either $P_1$ or $P_2$ if $d=4$. 
 If for some 
$k\in \Natural$, $V_2\in H^k(\Real^d)$, then $V_1\in H^k(\Real^d)$ as well.
\end{thm}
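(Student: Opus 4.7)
The plan is to combine Theorem~\ref{thm:samepoles}, which gives $\det S_1(\lambda)=\det S_2(\lambda)$ on all of $\Lambda$, with the sharp asymptotic expansion of $\frac{d}{d\lambda}\log\det S_j(\lambda)$ as $\lambda\to+\infty$ furnished by Theorem~\ref{thm:derivasmpt}, and then to extract the Sobolev regularity of $V_1$ by matching coefficients of heat-invariant type. The overall strategy is an induction on $k$.

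The hypotheses on $V_1$ and $V_2$ are precisely those of Theorem~\ref{thm:samepoles} (including the Hypothesis~\ref{hy:H1} stipulation when $d=4$), so that theorem applies and gives $\det S_1\equiv \det S_2$ on $\Lambda$. Restricting to $\lambda>0$ and differentiating yields
$$\frac{d}{d\lambda}\log\det S_1(\lambda)=\frac{d}{d\lambda}\log\det S_2(\lambda),\qquad \lambda>0,$$
so every asymptotic coefficient of these logarithmic derivatives as $\lambda\to+\infty$ agrees. By Theorem~\ref{thm:derivasmpt}, for each $V_j$ one has an expansion whose coefficient at the $k$-th level is a non-zero universal constant times $\int|\nabla^k V_j|^2$ plus a polynomial expression $Q_k(V_j)$ in the Sobolev norms of $V_j$ of order strictly less than $k$, and with remainder small enough to isolate this coefficient provided only $V_j\in H^{k-1}$.

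For the induction, the base case $k=0$ is automatic since $V_1\in L^\infty_c(\Real^d)\subset L^2(\Real^d)$. For the inductive step, assume $V_1\in H^{k-1}$ and $V_2\in H^k$; matching the $k$-th asymptotic coefficient of the two logarithmic derivatives gives an identity of the form
$$c_k\int|\nabla^k V_1|^2+Q_k(V_1)=c_k\int|\nabla^k V_2|^2+Q_k(V_2).$$
The right-hand side is finite by the hypothesis $V_2\in H^k$, and $Q_k(V_1)$ is finite by the inductive hypothesis $V_1\in H^{k-1}$; since $c_k\neq 0$ this forces $\int|\nabla^k V_1|^2<\infty$, i.e.\ $V_1\in H^k$, closing the induction.

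The main obstacle is the second step: identifying the $k$-th coefficient in the expansion of $\frac{d}{d\lambda}\log\det S_j(\lambda)$ as a specific heat-type invariant in which $\int|\nabla^k V_j|^2$ appears isolated from lower-order Sobolev data, and doing so with a remainder estimate small enough to make the match meaningful under only the borderline regularity $V_j\in H^{k-1}\cap L^\infty_c$. This is exactly the role of Theorem~\ref{thm:derivasmpt}; its delicacy is that the classical heat-kernel derivation of the coefficients must be pushed through at limited regularity, with derivatives interpreted distributionally and Sobolev-norm structure uncovered by integration by parts. A secondary point is that one should verify that the $k$-th coefficient on the $V_1$ side is not automatically finite for trivial reasons — otherwise the argument would give no information — but this is guaranteed by the non-degeneracy of $c_k$ together with the inductive control on $Q_k(V_1)$.
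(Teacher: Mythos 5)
The central difficulty is that you attribute to Theorem~\ref{thm:derivasmpt} a full asymptotic expansion, at every order, of $\frac{d}{d\lambda}\log\det S_j(\lambda)$ with coefficients of the form $c_k\int|\nabla^k V_j|^2 + Q_k(V_j)$. Theorem~\ref{thm:derivasmpt} asserts no such thing: it gives only the single leading term $-(d-2)c_d\bigl(\int V\bigr)\lambda^{d-3}$ with a power-saving error $O(\lambda^{d-7/2})$, and this is precisely the modest amount of information needed to run the entire-function argument in Theorem~\ref{thm:samepoles}. For merely bounded $V$ there is no known full expansion of the scattering phase, and the Sobolev-regularity content you are trying to read off does not live in Theorem~\ref{thm:derivasmpt}. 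So step two of your plan — isolating $\int|\nabla^k V_j|^2$ from the $k$-th asymptotic coefficient — has nothing to rest on, and the induction collapses.

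What the paper actually does is pass from the frequency side to the time side via the Birman--Krein formula (\ref{eq:b-k}). Since $\det S_1\equiv\det S_2$ on $\Lambda$, the scattering-phase integrals cancel and one finds $\tr(e^{-tP_1}-e^{-tP_2})$ is a finite sum of exponentials in $t$ plus a constant, hence smooth at $t=0$ after multiplication by $t^{(d-2)/2}$. The regularity conclusion is then obtained by invoking \cite[Theorem 4]{sm-zw}, which characterizes $V\in H^k$ purely in terms of the form (number of terms and boundedness of the remainder) of the short-time heat trace expansion of $\tr(e^{-tP}-e^{-tP_0})$: applying it first to $V_2$ (producing the expansion) and then in reverse to $V_1$ (whose expansion differs from $V_2$'s by a smooth function) gives $V_1\in H^k$ in one step, with no induction and no need to identify individual coefficients. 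You should not try to recover the content of \cite[Theorem 4]{sm-zw} from Theorem~\ref{thm:derivasmpt} — they are complementary, and the former is doing the real regularity work here.
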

Again, this result is not true if we omit the hypothesis that the potentials
are real-valued.  The paper \cite{sm-zw} proves a similar result
in odd dimensions: if $V_1,\; V_2\in L^\infty_c(\Real^d;\Real)$ have the 
same resonances, including multiplicities, and if $V_1\in H^k(\Real^d)$, 
then $V_2\in H^k(\Real^d)$.

We note that a consequence of our Theorem \ref{thm:Hm} is
that for $V_0\in C_c^\infty(\Real^d;\Real)$ supported in $\overline{B}(0,R_0)$,
we could replace the definition (\ref{eq:isodef}) by the equivalent
\begin{multline*}
Iso(V_0,R_0)=\{ V\in L^\infty_c(\Real^d;\Real): \; \supp V \subset \overline{B}(0,R_0)\;
\text{ and
$-\Delta+V$ and $-\Delta+V_0$}\\ \text{have the same resolvent resonances, including multiplicities}\}.
\end{multline*}

Another corollary of Theorem \ref{thm:samepoles} is the following.
\begin{cor}\label{cor:infinitediff}
 Let $d$ be even, and let $V_1, \; V_2\in L^{\infty}_c(\Real^d;\Real)$.  Then, if $d\not = 4$,
the resolvent resonance sets of $V_1$ and $V_2$ cannot differ by a nonzero finite number of nonzero elements.
If $d=4$, the same is true, provided that Hypothesis \ref{hy:H1} holds for both potentials, or both potentials are smooth.
%$0$ is not a resonance for either potential.
\end{cor}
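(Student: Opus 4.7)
The plan is to argue by contradiction. Assume the resolvent resonance sets of $V_1$ and $V_2$ differ at only finitely many nonzero points of $\Lambda$, but the difference is nontrivial. The strategy is to deduce $\det S_1\equiv \det S_2$ from Theorem \ref{thm:samepoles}, and then invoke the standard relation between the pole/zero structure of $\det S_V$ and the resonance multiplicities of $-\Delta+V$ to conclude the two resonance sets must in fact agree at every nonzero point of $\Lambda$, giving the desired contradiction.

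The first step is to verify the hypotheses of Theorem \ref{thm:samepoles} under the contradiction hypothesis. By the material recalled in Section \ref{s:multiplicities}, any pole of $\det S_j$ is a pole of the cut-off resolvent $\chi R_j\chi$ and hence corresponds to a resonance of $V_j$; moreover, via the scattering-matrix functional equation in even dimensions the zeros of $\det S_j$ are tracked by resonances at rotated points on $\Lambda$. Consequently, if the resonance sets of $V_1$ and $V_2$ agree outside a finite subset of $\Lambda\setminus\{0\}$, then the poles and zeros of $\det S_1$ coincide with those of $\det S_2$ outside a finite set, so $F(z)=\det S_1(e^z)/\det S_2(e^z)$ has at most finitely many poles. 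Both (\ref{eq:notmanypoles1}) and (\ref{eq:notmanypoles2}) then hold trivially, and the additional hypothesis for $d=4$ is part of the corollary's assumption.

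Theorem \ref{thm:samepoles} now yields $\det S_1(\lambda)=\det S_2(\lambda)$ for all $\lambda\in\Lambda$, so the orders of the zeros and poles of $\det S_1$ and $\det S_2$ agree everywhere on $\Lambda$. Appealing once more to the standard even-dimensional formula expressing the multiplicity of a nonzero resolvent resonance in terms of the orders of $\det S_V$ at the point in question and at the related point on the adjacent sheet, I would conclude that $V_1$ and $V_2$ have identical resonance multiplicities at every $\lambda_0\in\Lambda\setminus\{0\}$, contradicting the original assumption.

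The main obstacle I anticipate is the first step: translating ``the resonance sets differ at finitely many nonzero points'' into ``$F$ has finitely many poles''. This is delicate because the pole/zero structure of $\det S_V$ is related to the resonance set only through the sheet symmetry peculiar to even dimensions, so multiplicities must be tracked carefully across sheets of $\Lambda$. The material referenced in Section \ref{s:multiplicities} is designed to supply exactly this bookkeeping, and once it is in hand the remainder of the argument is a clean application of Theorem \ref{thm:samepoles} together with the multiplicity formula.
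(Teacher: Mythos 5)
Your first two steps — verifying the hypotheses of Theorem \ref{thm:samepoles} under the contradiction assumption and then concluding $\det S_1 \equiv \det S_2$ on $\Lambda$ — are correct and match the paper's argument. The gap is in your final step.

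You claim that, from $\det S_1\equiv\det S_2$ and the ``standard even-dimensional formula,'' you can conclude that $\mu_{R_1}(\lambda_0)=\mu_{R_2}(\lambda_0)$ for every $\lambda_0\in\Lambda\setminus\{0\}$. This does not follow. The formula in question, (\ref{eq:multrelation}), is
\begin{equation*}
\mu_{R_V}(\lambda_0)-\mu_{R_V}(\overline{\lambda_0})=-\operatorname{m}_{sc}(\det S(\lambda),\lambda_0),
\end{equation*}
which only pins down the \emph{difference} of resolvent multiplicities at $\lambda_0$ and at its conjugate point $\overline{\lambda_0}$. From $\det S_1\equiv\det S_2$ one therefore gets only
\begin{equation*}
\mu_{R_1}(\lambda_0)-\mu_{R_1}(\overline{\lambda_0})=\mu_{R_2}(\lambda_0)-\mu_{R_2}(\overline{\lambda_0}) \quad\text{for all } \lambda_0\in\Lambda,
\end{equation*}
and this is perfectly compatible with $\mu_{R_1}-\mu_{R_2}$ being a fixed nonzero constant along an entire orbit of points in $\Lambda$. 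Indeed, the paper explicitly flags (in the discussion just before the proof of Theorem \ref{thm:infinitelymany}) that $\det S_1\equiv\det S_2$ does not by itself force the resolvent resonance sets to coincide away from $0$; one can describe scenarios where the symmetric difference is infinite. You cannot deduce pointwise equality from the scattering determinant alone.

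The correct way to close the argument is Lemma \ref{l:multiplicities}: once $\det S_1/\det S_2$ is analytic and nonvanishing on $\Lambda$, if $\mu_{R_1}(\tau_0 e^{i\theta_0})-\mu_{R_2}(\tau_0 e^{i\theta_0})=m_0\neq0$ at a single point, then the same nonzero difference $m_0$ propagates to $\tau_0 e^{i(\theta_0+k\pi)}$ for all $k\in\Integers$, producing infinitely many points of disagreement — contradicting the assumption that the resonance sets differ at only finitely many nonzero points. This ``propagation'' step, not equality, is the mechanism that makes the corollary go through.
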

We emphasize here that we cannot at this point exclude the 
possibility that the resonance sets of $V_1 $ and $V_2$ are the same 
{\em except} that the point $0$ has different multiplicities as 
an element of the resonance sets for the two potentials.  We remark
that Korotyaev has studied the rigidity of the resonance set for 
a larger class of potentials in one dimension, i.e., $d=1$.  He showed
that within this larger class of potentials, 
finitely many resonances can be ``moved''
within certain restrictions.  For the full line case see
 \cite[Theorem 1.3]{kline} and for the
half-line  \cite{khalfline, kstability}.

\subsection{Notational conventions}
Throughout this paper we shall use the convention that
$C$  stands for a positive constant, the value of which may change from line to line
without comment. By the physical space in $\Lambda$  we mean the copy of the 
upper half plane $\{\Im \lambda>0\}$ in $\Lambda$ on which the 
resolvent $R_V(\lambda)$ is bounded from $L^2(\Real^d)$ to $L^2(\Real^d)$.
Our convention is that this corresponds to $\{ \lambda\in \Lambda: 0<\arg
\lambda<\pi\}$, and we identify this with the upper half plane when convenient.
%We identify $0<\arg \lambda<\pi$ as
%the portion of $\Lambda$ on which $R(\lambda)$ is bounded on $L^2(\Real^d)$, and 
%identify this with the upper half plane when convenient.
When $(0,\infty)=\Real_+$ is considered as a subset of $\Lambda$, it is 
identified with the set of points with argument $0$.  Likewise, if $\lambda 
\in\Lambda$, by
$\lambda>0$ or by $\lambda \in (0,\infty)$ we mean that the point 
$\lambda \in \Lambda$ has argument $0$.  

The set of resonances of $-\Delta +V$ includes all poles of $R_V$ on $\Lambda$,
including those corresponding to eigenvalues, and should be repeated 
with multiplicity.  Moreover, $0$ may be a resonance of $-\Delta +V$,
with multiplicity as defined in Section 
\ref{ss:H1}.

The dimension $d$ is assumed to be
even in subsequent sections, {\em except}  in Section \ref{s:asymptotic},
where $d$ can be even or odd, but $d\geq 2$.

The set $L^\infty_c(\Real^d;\Real)=\{ f\in L^\infty(\Real^d;\Real):\; \text{$f$ has compact support}\}$,
and similarly for $L^2_c(\Real^d)$, $C^\infty_c(\Real^d)$.

\subsection{Organization of the paper}
We briefly outline the organization of this paper. 
In Section \ref{ss:H1} we discuss the behavior of the resolvent near $0$
and fix the notion of the multiplicity of $0$ as a resonance which we shall
use in this paper.  This section also includes Hypothesis \ref{hy:H1}.
Section \ref{s:multiplicities}  recalls some results about the 
relationship between the poles and zeros of the determinant of the 
scattering matrix and the poles of the 
(meromorphically continued) resolvent,
and recalls the definition of the multiplicity of a nonzero resonance.  
%In section \ref{s:independence}
%we 
%consider some questions related to linear independence of elements of the 
%image of the singular part of the resolvent at different points on $\Lambda$.

An important technical step in the proof of Theorem \ref{thm:samepoles}
is Theorem \ref{thm:derivasmpt}, which is proved in Section 
\ref{s:asymptotic}.  Theorem  \ref{thm:derivasmpt} is a result
on the high-energy behavior of the logarithmic derivative of the 
determinant of the scattering matrix of $-\Delta +V$ under the
assumption only that $V\in L^\infty_c(\Real^d;\Real)$--that is, without
any additional regularity assumed on $V$.  Theorem
\ref{thm:derivasmpt} is valid in both even and odd dimensions.  
See Section \ref{s:asymptotic} for the statement of the theorem 
and references to earlier results.  

In Section \ref{s:near0} we turn to the behavior of the determinant of the 
scattering matrix near $0$.  
In Section \ref{s:canonicalprod} we write the function $F$ from
Theorem \ref{thm:samepoles} using a canonical product, and
then prove Theorem \ref{thm:samepoles} in Section \ref{s:samepoles}. The proof uses results about both 
the high and low energy behavior of the logarithmic derivative of the scattering matrix.  
Section \ref{s:thmpfs} includes proofs of Theorems \ref{thm:Hm}, 
\ref{thm:infinitelymany}, and 
 \ref{thm:heatcoefficients} and Corollary \ref{cor:infinitediff}.

In Section \ref{s:independence}
we 
consider some questions related to linear independence of elements of the 
image of the singular part of the resolvent at different points on $\Lambda$.

%Theorem \ref{thm:samepoles} and Proposition
%\ref{prop:derivasmpt} are the main technical results of this paper.  
%Theorem \ref{thm:samepoles}
% uses an adaptation of methods of S\'a Barreto \cite{SBeven} (see
%also \cite{chen}) and Proposition \ref{prop:derivasmpt}, which 
%is of some interest itself.
%Proposition  \ref{prop:derivasmpt} is a result
%on the high-energy behavior of the logarithmic derivative of the 
%determinant of the scattering matrix of $-\Delta +V$ under the
%assumption only that $V\in L^\infty_0(\Real^d;\Real)$--that is, without
%any additional regularity assumed on $V$.  Proposition 
%\ref{prop:derivasmpt} is valid in both even and odd dimensions.  
%See Section \ref{s:asymptotic} for the statement of the proposition 
%and references to earlier results.  

\section{Resonances at $0$ and Hypothesis \ref{hy:H1}}
\label{ss:H1}

Let $V\in L^\infty_c(\Real^d;\Real)$ with $d$ even.
All but at most
one of the resonances of $-\Delta +V$ lie on $\Lambda$, the logarithmic
cover of $\Complex \setminus \{0\}$.  However, $0$ may be a resonance
as well.  Since this is a delicate question and is related to Hypothesis
\ref{hy:H1}, we address this here.  For further details in even
dimensions we refer the reader to \cite{b-g-d} for dimension $2$,  \cite{jensen4} for dimension $4$, and \cite{jensen>4} for dimension $d\geq 6$.  We 
recall the results which are most important to us here, clarifying the 
language we shall use to describe the possible scenarios.

If for each $\chi  \in C_c^\infty(\Real^d)$, $\lim_{\epsilon \downarrow 0}\chi (-\Delta+V+\epsilon^2)^{-1}\chi$ exists as a bounded operator on $L^2(\Real^d)$, then
$0$ is {\em not} a resonance of $-\Delta +V$.  If this limit does not 
exist for some
$\chi \in C_c^\infty(\Real^d)$, then our convention is that $0$ is a resonance of $-\Delta +V$,
even if this singularity is caused by $-\Delta +V$ having $0$ as an eigenvalue.

%If $0$ is an eigenvalue of $-\Delta+V$, then, by our convention $0$ is a
%resonance of $-\Delta+V$.  
Let  $\mathcal{P}_0$ denote
projection onto the $L^2$ null space of $-\Delta +V$, with 
$\mathcal{P}_0=0$ if $0$ is not an eigenvalue of $-\Delta +V$.
  Then if $\mathcal{P}_0\not =0$ the leading 
singularity of $(-\Delta+V+\epsilon^2)^{-1}$ is given by $\mathcal{P}_0 \epsilon^{-2}$.  

%To give a unified treatment, set $\mathcal{P}_0=0$ 
%if $0$ is not an eigenvalue of $-\Delta +V$.  
If $d\geq 6$, then for any 
$\chi \in C_c^\infty(\Real^d)$
\begin{equation}\label{eq:0beh}
\lim_{\epsilon \downarrow 0} \chi(  (-\Delta +V+\epsilon^2)-\mathcal{P}_0/\epsilon^2) \chi
\end{equation}
exists, \cite{jensen>4}.  However, the behavior of the resolvent near $0$ is 
more complicated in lower dimensions.
%this may not be true in dimensions $d=2$ or $d=4$.  (A
%similar thing happens if $d$ is $1$ or $3$.  However, we restrict our discussion
%to even dimensions here.)

If for some $\chi \in C_c^\infty(\Real^d)$ the limit (\ref{eq:0beh}) {\em fails}
to exist, we will say that $0$ is a  ``non-eigenvalue
resonance'' of $-\Delta +V$.  We shall use this notation even if $0$ is simultaneously 
an eigenvalue of $-\Delta +V$, so that it is
possible for $0$ to be both an eigenvalue resonance and a non-eigenvalue
resonance. (We note that here is an awkwardness that arises from our 
convention that the square roots of eigenvalues in the closure of the 
physical space are resonances.  Had we not
adopted that convention, we could just say that $0$ is a resonance in this 
case.  Many writers do choose this other convention.  However,
our convention is more convenient for some other purposes.)  The non-eigenvalue
resonances at $0$ correspond to elements of the null space of $-\Delta +V$ which
are bounded (if $d=2$) or decaying at infinity (if $d=4$), but which are not
in $L^2(\Real^d)$.

In dimension $d=4$, some of our techniques do not work if $0$ is a 
non-eigenvalue
resonance
of $-\Delta +V$.  Hence for some results in dimension $4$ 
we shall need to assume the following hypothesis.
\begin{hyp}\label{hy:H1}  Let $\mathcal{P}_0$ denote projection onto the 
$L^2$ null space of $-\Delta+V$, if any.  Then for any $\chi \in C_c^\infty(\Real^4)$, $$\lim_{\epsilon \downarrow 0} \chi \left((-\Delta +V+\epsilon^2)^{-1}
-\epsilon^{-2}\mathcal{P}_0\right) \chi$$
exists.
%Either the resolvent of $-\Delta+V$ is bounded at $0$ (as an operator from $L^2_0(\Real^d)$ to $L^2_{\operatorname{loc}}(\Real^d)$)
%{\bf or} the range of the unbounded part of the the resolvent  at $0$ consists only of $L^2$ eigenfunctions of $-\Delta +V$.
\end{hyp}
We note that if this limit exists for one 
nontrivial $\chi_0\in C_c^\infty(\Real^4)$ with $\chi_0 V=V$, then it exists for
any $\chi\in C_c^\infty(\Real^4)$.  Moreover, this hypothesis is true 
generically.

Finally, we shall need a notion of the multiplicity of $0$ as a resonance
of $-\Delta +V$.  If $0$ is not a non-eigenvalue resonance, this is straightforward,
and is the dimension of the $L^2$ null space of $-\Delta +V$.  However, if
$0$ is a non-eigenvalue resonance, there are several possible notions.  We 
choose the one which is  most convenient for our purposes, but which may
not be the most natural in terms of the dimension of the space of bounded/decaying elements of the null space of $-\Delta +V$.

Let $S(\lambda)$ denote
the scattering matrix of $-\Delta +V$ and let 
$-\mu_1^2 \leq -\mu_2^2\leq ...\leq -\mu_K^2\leq 0$ denote the eigenvalues
of $-\Delta +V$, repeated according to multiplicity.  Our assumptions on
$V$ ensure that there are at most finitely many eigenvalues
and they are  real and non-positive. 
We shall use the heat trace repeatedly in our proofs, and we 
recall one expression for it here.  The Birman-Krein formula
tells us, for $t>0$, 
\begin{equation}\label{eq:b-k}
\tr(e^{t(\Delta -V)}-e^{t\Delta})= \frac{1}{2\pi i}\int_0^\infty \tr 
\left(S(\lambda)^{-1}
\frac{d}{d\lambda} S(\lambda)\right)e^{-t \lambda^2}d\lambda +\sum _{k=1}^Ke^{t\mu_k^2}+\beta(V,d)
\end{equation}
see \cite{CdV,gu}, \cite[Section 3.8]{dy-zw}, and references therein.
Here $\beta(V,d)$ is $0$ whenever $0$ is not a non-eigenvalue resonance of 
$-\Delta +V$.  However, the converse is not true
and the exact behavior is rather subtle; see \cite{b-d-o,b-g-d}.  
Our notion of multiplicity of $0$ as a resonance would better be 
called a normalized or weighted multiplicity, but 
in the interest of brevity we shall just refer to it as 
multiplicity.  We define
the multiplicity of $0$ as a resonance of $-\Delta +V$ to be
$$\beta(V,d)+\dim\{ f\in L^2(\Real^d): \;(-\Delta+V)f=0\}.$$
Note  that if $0$ is an eigenvalue of $-\Delta+V$ 
then it makes a
contribution  in (\ref{eq:b-k}) to the sum over the eigenvalues.

%In dimensions $d=2$ and $d=4$, the behavior of the resolvent of the Schr\"odinger operator $-\Delta +V$  at $0$ can be particularly complicated.  In 
%particular, the range of the unbounded
% part of the resolvent at $0$ can  include eigenfunctions (elements of the null space of $-\Delta +V$ which are in $L^2(\Real^d)$)
%and/or ``genuine'' $0$ resonant states, to adapt the language of 
%\cite{yafaevmathematical}.  
%These latter are elements of the null space of $-\Delta +V$ which
%are {\em not} in $L^2(\Real^d)$, but which are bounded.   If the image of the unbounded part of the resolvent of $-\Delta +V$ has a non-$L^2$ element,
%let us say that $0$ is a  resonance of $-\Delta +V$ not caused by an eigenvalue5.  We will use this language even if $0$ is simultaneously an eigenvalue of 
%$-\Delta +V$.

 %In any dimension $d$, our techniques can handle the presence of an eigenvalue at $0$.  However,
%in dimension $d=4$ 
%they cannot handle the possibility that $0$ is a  resonance of $-\Delta +V$ not caused by an eigenvalue.  Hence we shall often
%make the following hypothesis if $d=4$.  
%\begin{hyp}\label{hy:H1} Either the resolvent of $-\Delta+V$ is bounded at $0$ (as an operator from $L^2_0(\Real^d)$ to $L^2_{\operatorname{loc}}(\Real^d)$)
%{\bf or} the range of the unbounded part of the the resolvent  at $0$ consists only of $L^2$ eigenfunctions of $-\Delta +V$.
%\end{hyp}

%For a much more detailed discussion of possible resolvent behaviors at $0$, see \cite{b-g-d} for dimension $2$,  \cite{jensen4} for dimension $4$, and \cite{jensen>4} for dimension $d\geq6$.

\section{Multiplicities of poles of the resolvent and scattering determinant}
\label{s:multiplicities}

In this section we clarify the relationship between the poles of the
resolvent and the poles of the determinant of the scattering matrix.  
This is well-known in odd dimensions, but is a bit more subtle in even 
dimensions.  The discussion here is taken from \cite{ch-hi4}, though modified
to reflect the fact that
we need only a somewhat simplified version for our specific case of the 
Schr\"odinger operator on $\Real^d$.

Let $d$ be even.  We consider here the 
case of poles of the resolvent which lie on $\Lambda$--that
is, all of the poles except the (possible) pole at $0$.
We define the notion of the multiplicity $\mu_{R_V}$  of 
the pole of the resolvent as follows. Given $\lambda_0\in \Lambda$, define $\gamma_{\lambda_0}$ to be 
a small positively oriented
circle centered at $\lambda_0$ that contains no poles of 
the resolvent except, possibly, a pole at $\lambda_0$. Here we locally 
identify a subset of $\Lambda$ with a subset of the complex plane.
Define, for $\lambda_0
\in \Lambda$,
\begin{equation}
\label{eq:multresolvent}
\mu_{R_V}(\lambda_0) \defeq \rank  \int_{\gamma_{\lambda_0}} 
 R_V(\lambda) 2 \lambda d\lambda = \rank \int_{\gamma_{\lambda_0}}
 R_V(\lambda) d\lambda.
\end{equation}

We shall also consider poles of the determinant of the scattering matrix,
 a scalar function on $\Lambda$.  Following \cite{ch-hi4},
let $f$ be a (scalar) function meromorphic on $\Lambda$.  
If $f(\lambda_0)=0$, define $\msca (f,\lambda_0)$ to be the 
multiplicity of $\lambda_0$ as a zero of $f$.  If $f$  has a pole at $\lambda_0$, define $\msca (f,\lambda_0)$ to be 
minus the order of the pole of $f$ at $\lambda_0$.  If $\lambda_0$ is neither a pole nor a zero of $f$, 
set $\msca (f,\lambda_0)=0$.  Thus $\msca(f, \cdot)$ is positive at zeros and negative at poles.  It should be thought of as measuring the order of vanishing
of the function $f$ at $\lambda_0$.

From \cite[Theorem 4.5]{ch-hi4}, if $V\in L^\infty_c(\Real^d;\Real)$, then
for $\lambda_0\in \Lambda$,
\begin{equation}\label{eq:multrelation}
\mu_{R_V}(\lambda_0) - \mu_{R_V}(\overline{\lambda_0})
=-\msca(\det S(\lambda),\lambda_0),
\end{equation}  where $\overline{\lambda} = |\lambda| e^{-i \arg \lambda}$.
Thus the determinant of the scattering matrix does not necessarily have a 
pole at each pole of the resolvent.  However, if the determinant of the scattering matrix
has a pole at $\lambda_0$, then the resolvent must have a pole at $\lambda_0$.

The following lemma and its corollary will be used in the proofs of 
Theorem \ref{thm:infinitelymany} and Corollary \ref{cor:infinitediff}.
\begin{lemma}\label{l:multiplicities}
 Let $V_1,\; V_2 \in L^\infty_c(\Real^d;\Real)$, and for $j=1,2$
let
$S_j$ denote the scattering matrix for the operator $-\Delta +V_j$.
Suppose $\det S_1(\lambda)/\det S_2(\lambda)$ is analytic on all of $\Lambda$.
Let $\mu_{R_j}(\lambda)$ denote the quantity (\ref{eq:multresolvent})
for the operator $-\Delta +V_j$. Then, for $\tau_0>0$, $\theta_0\in \Real$,
$$\mu_{R_1}(\tau_0 e^{i(\theta_0+k\pi)})-\mu_{R_2}(\tau_0 e^{i(\theta_0+k\pi)})
= \mu_{R_1}(\tau_0 e^{i\theta_0})-\mu_{R_2}(\tau_0 e^{i\theta_0}),\; \text{if}\;k\in \Integers$$
and 
$$\mu_{R_1}(\tau_0 e^{i(-\theta_0+k\pi)})-\mu_{R_2}(\tau_0 e^{i(-\theta_0+k\pi)})
= \mu_{R_1}(\tau_0 e^{i\theta_0})-
\mu_{R_2}(\tau_0 e^{i\theta_0}),\; \text{if}\; k\in \Integers.$$
\end{lemma}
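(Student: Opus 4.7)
My plan is to obtain invariance of $h(\lambda) := \mu_{R_1}(\lambda) - \mu_{R_2}(\lambda)$ on $\Lambda$ under the group generated by $\lambda \mapsto \overline\lambda$ and $\lambda \mapsto \lambda e^{i\pi}$, by combining (\ref{eq:multrelation}) with a second symmetry coming from the real-valuedness of the $V_j$. First I would check that $F$ is nowhere vanishing on $\Lambda$: real-valuedness gives $S_j^*(\overline\lambda) = S_j(\lambda)^{-1}$, so taking determinants yields $F(\lambda)\overline{F(\overline\lambda)} = 1$, i.e.\ $\msca(F,\lambda) + \msca(F,\overline\lambda) = 0$; combined with the hypothesis that $F$ is analytic on $\Lambda$ (so $\msca(F,\cdot) \geq 0$), this forces $\msca(F,\lambda) = 0$ throughout $\Lambda$. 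Subtracting (\ref{eq:multrelation}) for $V_1$ from that for $V_2$ at any $\lambda_0 \in \Lambda$ then gives
\[
h(\lambda_0) - h(\overline{\lambda_0}) = -\msca(F,\lambda_0) = 0,
\]
which, evaluated at $\lambda_0 = \tau_0 e^{i(\theta_0+k\pi)}$, is precisely the second displayed identity of the lemma with $k$ replaced by $-k$.

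Next I would invoke the ``complex conjugation plus $\pi$-sheet shift'' symmetry of $R_V$ for real-valued $V$ on $\Lambda$, namely $\mu_{R_V}(\lambda) = \mu_{R_V}(\overline\lambda e^{i\pi})$ for every $\lambda$. Heuristically, a resonant state at $\lambda_0$ with outgoing asymptotics $\psi_0 \sim H^{(1)}_\nu(\lambda_0 r)$ conjugates (allowed because $V$ is real) to a solution at energy $\overline{\lambda_0}^{\,2}$ with asymptotics $\overline{H^{(1)}_\nu(\lambda_0 r)} = H^{(2)}_\nu(\overline{\lambda_0}\, r)$, and the integer-order Hankel identity $H^{(1)}_\nu(z e^{i\pi}) = -e^{-i\nu\pi}H^{(2)}_\nu(z)$ identifies this as outgoing at $\overline{\lambda_0} e^{i\pi}$. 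Taking the $V_1$--$V_2$ difference then gives $h(\lambda) = h(\overline\lambda e^{i\pi})$. Combining with the reflection identity from the first paragraph yields $h(\overline\lambda) = h(\overline\lambda e^{i\pi})$, i.e.\ $h(\mu) = h(\mu e^{i\pi})$ for every $\mu \in \Lambda$, which is the $\pi$-shift invariance. Iterating this together with the reflection identity produces both displayed identities of the lemma for every $k \in \mathbb{Z}$.

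The principal obstacle is the careful justification of the PT-type symmetry $\mu_{R_V}(\lambda) = \mu_{R_V}(\overline\lambda e^{i\pi})$ across all sheets of $\Lambda$. In the physical sheet it follows from $R_V(\lambda)^* = R_V(\overline\lambda)$ together with the $\pi$-sheet transformation of the free resolvent kernel via integer-order Hankel functions; extending it to the full logarithmic cover requires tracking the analytic continuation of $R_V$ through the Lippmann--Schwinger representation $R_V = R_0 - R_0 V R_V$ and propagating the Hankel identities above. The relation (\ref{eq:multrelation}) alone only pairs $\lambda$ with its conjugate $\overline\lambda$ (realizing the involution $\theta \mapsto -\theta$ on arguments), and the orbits of this involution on $\Lambda$ do not cross the $\pi$-translated sheets, so the missing cross-sheet information must be supplied by this additional even-dimension-specific PT symmetry.
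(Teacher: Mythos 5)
Your proof is correct and follows essentially the same route as the paper's: both establish the reflection identity $h(\lambda)=h(\overline\lambda)$ from (\ref{eq:multrelation}) after observing that $F$ is non-vanishing on $\Lambda$ (via $F(\lambda)\overline{F(\overline\lambda)}=1$), then combine it with the individual symmetry $\mu_{R_j}(\lambda)=\mu_{R_j}(e^{i\pi}\overline\lambda)$ coming from the real-valuedness of $V_j$, and iterate.

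One remark worth making: the ``PT-type'' symmetry that you flag in your final paragraph as the principal obstacle is in fact considerably easier than your Hankel-function heuristic suggests. For real-valued $V$ and $\lambda$ in the physical region $0<\arg\lambda<\pi$ one has $R_V(\lambda)^*=(-\Delta+V-\overline{\lambda^2})^{-1}$, and since $(e^{i\pi}\overline\lambda)^2=\overline{\lambda^2}$ and $e^{i\pi}\overline\lambda$ again lies in the physical region, this is exactly $R_V(e^{i\pi}\overline\lambda)$. The identity $R_V(\lambda)^*=R_V(e^{i\pi}\overline\lambda)$ then persists under meromorphic continuation to all of $\Lambda$, so $\mu_{R_V}(\lambda)=\mu_{R_V}(e^{i\pi}\overline\lambda)$ for every $\lambda\in\Lambda$ with no need to unwind the free-resolvent kernel through integer-order Hankel identities or the Lippmann--Schwinger series. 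This is how the paper states the fact, and invoking it directly would tighten your argument and remove the main perceived gap.
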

\begin{proof}
Note that our assumption on $\det S_1(\lambda)/\det S_2(\lambda)$ implies that
this ratio is non-vanishing on all of $\Lambda$.  Hence, for all 
$\lambda_0 \in \Lambda$, 
\begin{equation}\label{eq:mscasame}
\msca(\det S_1(\lambda), \lambda_0)=
 \msca(\det S_2(\lambda), \lambda_0).
\end{equation}

 By (\ref{eq:multrelation})
and (\ref{eq:mscasame}),
\begin{equation}
\label{eq:mudiff1}
\mu_{R_1}(\tau e^{i\theta})-\mu_{R_2}(\tau e^{i\theta})= 
\mu_{R_1}(\tau e^{-i\theta})-\mu_{R_2}(\tau e^{-i\theta})
\end{equation}
for any $\tau>0$, $\theta\in \Real$.
%Since $R_j(\lambda)= R^*(e^{i\pi} \overline{\lambda})$,
%Using the 
%notation  $(Jf)(\omega)=f(-\omega)$, from \cite[Proposition 2.1]{ch-hi4} 
%$$S_j^*(\overline{\lambda})= 2I-J S_j(e^{i\pi} \lambda)J.$$  Hence,
%if $t>0$, $\theta_1 \in \Real$,
%% \begin{equation}\label{eq:ressymmetry}
%\mu_{R_j}(t e^{i(\pi/2-\theta_1)})= \mu_{R_j}(t e^{i( \pi/2 + \theta_1)}).
%\end{equation}
%This implies that
Since $R_j(\lambda)= R^*_j(e^{i\pi} \overline{\lambda})$, $\mu_{R_j}(\lambda)=
\mu_{R_j}(e^{i\pi} \overline{\lambda})$ and 
\begin{equation}\label{eq:step2}
\mu_{R_j}( \tau e^{\pm i \theta})= \mu_{R_j}(\tau e^{i(\pi \mp\theta)}).
\end{equation}
Combining this with (\ref{eq:mudiff1}) gives
$$
\mu_{R_1}(\tau e^{i(\pi\pm \theta)})-\mu_{R_2}(\tau e^{i(\pi \pm \theta)})
= \mu_{R_1}(\tau e^{i\theta})-\mu_{R_2}(\tau e^{i\theta}).$$
Applying (\ref{eq:mudiff1}) again, this gives
% (\ref{eq:multresolvent}) and (\ref{eq:mscasame}) again gives
\begin{align}\label{eq:step4}\nonumber
\mu_{R_1}(\tau e^{-i(\pi\pm \theta)})-\mu_{R_2}(\tau e^{-i(\pi \pm \theta)}) & 
=
\mu_{R_1}(\tau e^{i(\pi\pm \theta)})-\mu_{R_2}(\tau e^{i(\pi \pm \theta)}) \\
& 
= \mu_{R_1}(\tau e^{i\theta})-\mu_{R_2}(\tau e^{i\theta}).
\end{align}

Using (\ref{eq:mudiff1}), (\ref{eq:step2}), and (\ref{eq:step4}) 
inductively proves the lemma.
\end{proof}

Lemma \ref{l:multiplicities} has the following corollary
as a special case.  This corollary will be used in the proof of
Theorem \ref{thm:infinitelymany}.
\begin{cor} \label{c:negativeeigenvalues}
 Let $V\in L^\infty_c(\Real^d;\Real)$ and denote the associated scattering
matrix $S(\lambda)$.  If $\det S(\lambda)$ is analytic on all
of $\Lambda$ but $-\Delta +V$ has a negative eigenvalue, then 
the meromorphic continuation of the  resolvent of $-\Delta +V$
has infinitely many poles.  In particular, in this case if $-\rho^2 $ is an 
eigenvalue of $-\Delta +V$ with multiplicity $m_0>0$ and if $\rho>0$, then 
$e^{i\pi(k+1/2)}\rho \in \Lambda$ is a pole of $\chi R_V(\lambda)\chi$
of multiplicity $m_0$ for every 
$k\in \Integers$.
\end{cor}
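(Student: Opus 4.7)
The plan is to reduce to Lemma \ref{l:multiplicities} by taking the comparison potential to be trivial. Setting $V_2 \equiv 0$, the meromorphically continued free resolvent $R_0(\lambda) = (-\Delta - \lambda^2)^{-1}$ has no poles on $\Lambda$ at all, so $\mu_{R_2}(\lambda) = 0$ identically; moreover the free scattering matrix is the identity, hence $\det S_2 \equiv 1$. The hypothesis on $\det S$ in the corollary then reads precisely that $\det S_1(\lambda)/\det S_2(\lambda)$ is analytic on all of $\Lambda$, which is exactly what Lemma \ref{l:multiplicities} requires.

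Next I would pin down a starting point dictated by the eigenvalue. The eigenvalue $-\rho^2$ of $-\Delta + V$ corresponds to the point $\lambda_0 = i\rho = \rho e^{i\pi/2}$ in the physical sheet of $\Lambda$, since $\lambda_0^2 = -\rho^2$ and $\arg \lambda_0 = \pi/2 \in (0,\pi)$. A standard residue calculation from the spectral decomposition shows that the rank of the contour integral in (\ref{eq:multresolvent}) around $\lambda_0$ equals the dimension $m_0$ of the $L^2$ eigenspace, so $\mu_{R_1}(\rho e^{i\pi/2}) = m_0$.

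With $\tau_0 = \rho$ and $\theta_0 = \pi/2$, Lemma \ref{l:multiplicities} then propagates this multiplicity along the whole orbit:
$$\mu_{R_1}(\rho e^{i(\pi/2 + k\pi)}) = \mu_{R_1}(\rho e^{i\pi/2}) = m_0, \qquad k \in \Integers,$$
which is the claim since $e^{i\pi(k+1/2)}\rho = \rho e^{i(\pi/2 + k\pi)}$. Because $\Lambda$ is the logarithmic cover of $\Complex \setminus \{0\}$ and points whose arguments differ by a nonzero integral multiple of $2\pi$ are not identified, the points $\{\rho e^{i(\pi/2 + k\pi)} : k \in \Integers\}$ are pairwise distinct in $\Lambda$, yielding infinitely many poles of $\chi R_V(\lambda)\chi$. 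I do not expect serious obstacles here: the only step that requires a moment of care is verifying that the contour integral in (\ref{eq:multresolvent}) has rank exactly $m_0$ at a genuine $L^2$ eigenvalue, which is a routine consequence of the Riesz projection formula at an isolated eigenvalue; after that, the corollary is a one-line application of Lemma \ref{l:multiplicities}.
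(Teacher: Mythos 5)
Your proof is correct and follows the paper's approach exactly: both apply Lemma \ref{l:multiplicities} with $V_1=V$, $V_2\equiv 0$, $\tau_0=\rho$, $\theta_0=\pi/2$. The paper states this as a one-line proof; you simply spell out the supporting observations ($\mu_{R_2}\equiv 0$, $\det S_2\equiv 1$, and $\mu_{R_1}(\rho e^{i\pi/2})=m_0$ via the Riesz projection) that the paper leaves implicit.
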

\begin{proof}
In Lemma \ref{l:multiplicities}, take $V_1=V$, $V_2\equiv 0$, 
$\tau_0=\rho$ and $\theta_0=\pi/2$.
\end{proof}
\section{High energy behavior of the determinant of 
the scattering matrix on the 
real line}
\label{s:asymptotic}

The proof of Theorem \ref{thm:samepoles} which we shall give uses Theorem
\ref{thm:derivasmpt}, a result about the large $\lambda$ behavior of the determinant of
the scattering matrix on the positive real axis.    A stronger result
 is well known for smooth potentials $V$ \cite{CdV,gu,pop} and \cite[Theorem 9.2.12]{yafaevmathematical}, 
or even, if $d=3$
for some potentials
 with less regularity \cite{jensenasymptotics}, but still with 
more regularity than $L^\infty$. 
A related but slightly different 
 result for dimensions $2$ and $3$ 
and $V\in L^\infty_c(\Real^d;\Real)$ is \cite[Theorem 3.12]{yafaev}
or \cite[Theorem 9.1.14]{yafaevmathematical}. 
However, we are unaware of a result
which is valid  in all dimensions 
$d\geq 2$ for the class of potentials which we consider here.  
The parity of the dimension is not important here.  Hence 
in this section the dimension $d$ is allowed to be even or odd, but
we always assume $d\geq 2$.
\begin{thm}\label{thm:derivasmpt} Let $d\geq 2$ be even or odd.
Let $V\in L^{\infty}_c(\Real^d;\Real)$, and let $S(\lambda)$ be the associated
scattering matrix. Then for $\lambda \in \Real_+$ (i.e., $\arg \lambda=0$),
$$\frac{1}{i}\frac{\frac{d}{d\lambda} \det S(\lambda)}{\det S(\lambda)}
= -(d-2)c_d \int V(x)dx \lambda^{d-3}+O(\lambda^{d-7/2})\; \text{as $\lambda \rightarrow \infty$}.$$
Here $c_d=\pi (2\pi)^{-d} \vol(\Sphere^{d-1})$.
\end{thm}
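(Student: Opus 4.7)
The plan is to pass from $\det S(\lambda)$ to a suitably regularized trace of the resolvent difference via the Birman--Krein formula, expand around the free resolvent using the resolvent identity, compute the leading Born contribution explicitly in terms of the spectral density of $-\Delta$, and bound the remainder using only the $L^\infty$ norm of $V$ together with high-energy resolvent estimates.

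First, I would apply the Birman--Krein identity $\det S(\lambda)=e^{-2\pi i\xi(\lambda^2)}$, together with the spectral-shift representation $\pi\xi'(\lambda^2)=\Im\tr\bigl(R_V(\lambda+i0)-R_0(\lambda+i0)\bigr)$, to obtain
$$\frac{1}{i}\frac{\frac{d}{d\lambda}\det S(\lambda)}{\det S(\lambda)} = -4\lambda\,\Im\tr\bigl(R_V(\lambda+i0)-R_0(\lambda+i0)\bigr)$$
on $(0,\infty)$, away from the finitely many (real, non-positive) eigenvalues of $-\Delta+V$. Here the trace is understood with enough Born iterates subtracted to be trace class in the given dimension; only imaginary parts are needed, and these are controlled by the smooth spectral density of $-\Delta$.

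Second, applying the resolvent identity once,
$$R_V-R_0 = -R_0 V R_0 + R_0 V R_0 V R_V,$$
I would evaluate the leading term explicitly. By cyclicity and $R_0^2=-\partial_E R_0$, $\Im\tr(R_0 V R_0) = -\int V(x)\,\partial_E\Im R_0(\sqrt{E}+i0;x,x)\big|_{E=\lambda^2}\,dx$. Substituting the standard expression $\Im R_0(\lambda+i0;x,x)=\frac{\pi\vol(\Sphere^{d-1})}{2(2\pi)^d}\lambda^{d-2}$ for the free spectral density at energy $\lambda^2$, differentiating, and combining with the prefactor $-4\lambda$ yields exactly the claimed leading term $-(d-2)c_d\lambda^{d-3}\int V(x)\,dx$.

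Third, I would bound the remainder $\Im\tr(R_0 V R_0 V R_V)(\lambda+i0)$ by $O(\lambda^{d-9/2})$, so that after multiplication by $-4\lambda$ the error is $O(\lambda^{d-7/2})$. With $\chi\in C_c^\infty(\Real^d)$ equal to $1$ on $\supp V$, cyclicity reduces the quantity to the trace of $\chi R_V\chi\cdot V\chi R_0\chi V\chi R_0\chi$, which is bounded in absolute value by
$$\|\chi R_V(\lambda+i0)\chi\|_{L^2\to L^2}\cdot\|V\|_\infty^2\cdot\|\chi R_0(\lambda+i0)\chi\|_{\mathrm{HS}}^2.$$
The first factor is $O(\lambda^{-1})$ by the standard limiting-absorption / Agmon estimate valid for any $V\in L^\infty_c(\Real^d;\Real)$; the cut-off Hilbert--Schmidt norm of the free resolvent is computable from the explicit Hankel-function kernel and supplies the remaining powers of $\lambda$.

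The main obstacle is this last step. The half-power loss $\lambda^{-1/2}$ relative to the Born leading term precisely reflects the missing half-derivative of $V$ compared with the smooth-potential case, so the trade-off between the growth of the Hilbert--Schmidt norms of the cut-off free resolvent and the $\lambda^{-1}$ decay of $\chi R_V\chi$ must be balanced carefully. A further subtlety is that in dimensions $d\geq 4$ even $R_0 V R_0$ is not itself trace class without cutoffs, so one has to work entirely with imaginary parts (which are smooth on the diagonal thanks to the spectral density of $-\Delta$) rather than full traces, and verify that the regularization used in Step 1 is consistent with the explicit computations in Steps 2--3.
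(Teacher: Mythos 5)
Your overall decomposition---Born expansion to isolate the leading term, then bound the remainder---parallels the paper's strategy (there it is phrased via the explicit formula $S(\lambda)=I-2\pi i\,\Gamma_0(\lambda)(V-VR_V(\lambda)V)\Gamma_0^*(\lambda)$ rather than Birman--Krein plus the resolvent identity, with $B_1=-2\pi i\Gamma_0V\Gamma_0^*$ and $B_2=2\pi i\Gamma_0VR_0V\Gamma_0^*$ playing the roles of your two pieces), and the Step~2 evaluation of the leading Born term via the free spectral density is essentially correct. The gap is in Step~3, and it is substantive.

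First, $\chi R_0(\lambda+i0)\chi$ is Hilbert--Schmidt only for $d\le 3$: the free resolvent kernel behaves like $|x-y|^{-(d-2)}$ near the diagonal, which fails to be locally square-integrable once $d\ge 4$. The bound $\|\chi R_V\chi\|\cdot\|V\|_\infty^2\cdot\|\chi R_0\chi\|_{\mathrm{HS}}^2$ is therefore not defined in dimensions $d\ge 4$, which is exactly the regime the even-dimensional results in this paper most need. The paper sidesteps this by factoring through $\Gamma_0(\lambda)\chi$, which is Hilbert--Schmidt in every dimension $d\ge 2$ with the explicit growth $\|\Gamma_0(\lambda)\chi\|_{\mathrm{HS}}^2=O(\lambda^{d-2})$ (see (\ref{eq:HSnorm})).

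Second, and more fundamentally, a norm bound on the remainder operator cannot produce the claimed $\lambda^{-1/2}$ gain, because it does not distinguish the imaginary part of the trace from the full trace. Even in $d=3$, where everything is defined, one has $\|\chi R_V\chi\|=O(\lambda^{-1})$ and $\|\chi R_0\chi\|_{\mathrm{HS}}^2=O(1)$, so your estimate gives $O(\lambda^{-1})$, and after the $4\lambda$ prefactor this is $O(1)=O(\lambda^{d-3})$---the \emph{same} order as the leading term, not the required $O(\lambda^{d-7/2})$. (The paper's own naive bound $\|\dot B_2\|_{\tr}\le C\lambda^{d-3}$ is of exactly this order; it is only $\Im\tr\dot B_2$ that is smaller.) The half-power improvement comes from opening up the kernel: $\Im\tr\dot B_2$ is rewritten as an oscillatory integral over $\eta\in\Real^d$, $\omega\in\Sphere^{d-1}$ involving $|\hat V(\lambda\eta)|^2\big(|\eta+\omega|^2-1-i0\big)^{-1}$, whose principal-value $\omega$-integral $g(|\eta|)$ has a controlled $|{|\eta|}-2|^{-1/2}$ singularity on the sphere $|\eta|=2$ (Lemmas~\ref{l:gcont1}--\ref{l:gprop}). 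The proof of Lemma~\ref{l:B2est} then splits the $\eta$-integral into the regions $||\eta|-2|\lessgtr\lambda^{-1}$, and the competition between the $|{|\eta|}-2|^{-1/2}$ weight and the localization $|\hat V(\lambda\eta)|^2$ is precisely what buys the factor $\lambda^{-1/2}$---using only that $\hat V,\nabla\hat V\in C^\infty\cap L^2$, i.e., only $V\in L^\infty_c$. This is a stationary-phase/singularity analysis, not an operator-norm estimate, and without it the remainder stays at the scale of the leading term.

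Your concluding paragraph correctly names both obstructions, but naming them is not the same as resolving them: as written, the argument in Steps~1 and~3 does not close, and the route through the SSF plus $\|\chi R_0\chi\|_{\mathrm{HS}}$ does not appear to be repairable without essentially re-deriving the paper's Fourier-side cancellation.
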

Note that when $d=2$, the coefficient of $\lambda^{d-3}$ is $0$.

\subsection{Reduction of the proof of Theorem \ref{thm:derivasmpt}
to Lemma \ref{l:B2est}}
The proof uses an explicit expression for the scattering
matrix (e.g. \cite[(8.1)]{yafaevbook})
\begin{equation}\label{eq:scattmatrix}
S(\lambda) = I-2\pi i \Gamma_0(\lambda) (V-VR_V(\lambda)V)\Gamma_0(\lambda)^*,
\; \text{if $\lambda \in (0,\infty)$}
\end{equation}
where 
$$\left(\Gamma_0(\lambda)f\right)(\omega)= 2^{-1/2} (2\pi)^{-d/2}\lambda^{(d-2)/2}
\hat{f}(\lambda \omega),\; \omega \in \Sphere^{d-1}$$
and $\hat{f}(\xi)=\int e^{-i x \cdot \xi}f(x)dx$.
Consistent with our notation elsewhere, for 
$\lambda>0$  $R_V(\lambda)=(-\Delta+V-(\lambda +i0)^2)^{-1}$.

Let $\chi\in C_c^{\infty}(\Real^d)$.
By \cite[Theorem 16.1]{k-k}, with $R_0(\lambda)=(-\Delta -\lambda^2)^{-1}$ 
when $\Im \lambda>0$, 
\begin{equation}\label{eq:resbd}
\left\| \frac{d^j}{d\lambda^j}\chi R_0(\lambda) \chi \right\| \leq C_j \lambda^{-1} 
\; \text{when $\lambda \in (0,\infty)$,  $j\in \Natural_0$}.
\end{equation}
Here the value of $C_j$ depends on $\chi$ as well as on $j$.

Since $$\| \Gamma_0(\lambda )\chi f\|^2 = \pi^{-1} \Im  \langle \chi R_0(\lambda)\chi f,f\rangle$$
\cite[(10.1)]{yafaevbook}, we have
\begin{equation}\label{eq:gammaest}\left\|  \Gamma_0 (\lambda) \chi \right\| 
\leq C \lambda^{-1/2},\; 
\left\| \frac{d}{d\lambda }  \Gamma_0 (\lambda) \chi \right\| 
\leq C \lambda^{-1/2} 
\; \text{when $\lambda \in (0,\infty)$}.
\end{equation}
With $\| \cdot \|_{HS}$ denoting the Hilbert-Schmidt norm
\begin{equation}\label{eq:HSnorm}
\| \Gamma_0(\lambda) \chi\|_{HS}^2  =  \frac{1}{2(2\pi)^d}\int_{x\in \supp \chi}
\int _{\omega \in \Sphere^{d-1}} \left|\lambda^{(d-2)/2} e^{-i\lambda x\cdot \omega} \chi(x)\right|^2
d\sigma_\omega dx
\leq C \lambda^{d-2},\; \lambda 
\in (0,\infty)
\end{equation}
where $d\sigma$ denotes the density on $\Sphere^{d-1}$.
Likewise,
 $\| \frac{d}{d\lambda} \Gamma_0(\lambda) \chi\|_{HS}^2\leq C \lambda^{d-2}$ for $\lambda \in (1,\infty)$.

For an operator $A$ depending on $\lambda$,
 denote by $\dot{A}(\lambda)$ the derivative of $A(\lambda)$ with respect to $\lambda$.

Recall that 
for $\lambda \in (0,\infty)$, $S^*(\lambda)S(\lambda)=I$, so that
\begin{equation}
\tr[ S^*(\lambda) \dot{S}(\lambda)]= \tr[ S^{-1}(\lambda) \dot{S}(\lambda)]
= \frac{\frac{d}{d\lambda}\det S(\lambda)}{\det S(\lambda)},\; \lambda \in 
(0,\infty).
\end{equation}

Set
\begin{equation}\label{eq:Bs}
B_1(\lambda) = -2\pi i \Gamma_0(\lambda) V \Gamma_0^*(\lambda)\; \text{and}\;
B_2(\lambda) = 2\pi i \Gamma_0(\lambda) V  R_0(\lambda) V \Gamma_0^*(\lambda).
\end{equation}
\begin{lemma}\label{l:tracewithBs}Let $B_1$, $B_2$ be as defined in (\ref{eq:Bs}).
Then for $\lambda \in (0,\infty)$, 
$$\tr (S^*(\lambda)\dot{S}(\lambda))= 
\tr \left( \dot{B}_1(\lambda)+  \dot{B}_2(\lambda)+ B_1^*(\lambda) \dot{B}_1(\lambda)
\right) +O(\lambda^{d-4})\; \text{ when $\lambda \rightarrow \infty$}.
$$
\end{lemma}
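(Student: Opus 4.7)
The plan is to expand $V - V R_V V$ as a Born series in $V$ so as to isolate the few trace terms that are not automatically $O(\lambda^{d-4})$. Iterating the resolvent identity $R_V = R_0 - R_0 V R_V$ twice yields
\[
V - V R_V V = V - V R_0 V + V R_0 V R_0 V - V R_0 V R_0 V R_V V,
\]
and substituting into (\ref{eq:scattmatrix}) gives $S = I + B_1 + B_2 + B_3 + \mathscr{R}$ with
\[
B_3 = -2\pi i \,\Gamma_0 V R_0 V R_0 V \Gamma_0^*, \qquad \mathscr{R} = 2\pi i \,\Gamma_0 V R_0 V R_0 V R_V V \Gamma_0^*.
\]
Since $S^* S = I$ on $(0,\infty)$, setting $T = S - I$ gives $S^* \dot S = \dot T + T^* \dot T$, so
\[
\tr(S^* \dot S) = \sum_{k=1}^{4} \tr \dot X_k + \sum_{i,j=1}^{4} \tr(X_i^* \dot X_j),
\]
with $(X_1,X_2,X_3,X_4) = (B_1,B_2,B_3,\mathscr{R})$.

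The task is then to show that every summand other than $\tr \dot B_1$, $\tr \dot B_2$, and $\tr(B_1^* \dot B_1)$ is $O(\lambda^{d-4})$. The tools are (\ref{eq:resbd}), (\ref{eq:gammaest}), (\ref{eq:HSnorm}), the inequality $\|AB\|_1 \leq \|A\|_{HS}\|B\|_{HS}$, and a standard high-energy bound $\|\chi R_V(\lambda)\chi\| = O(\lambda^{-1})$ (obtained by writing $\chi R_V \chi = \chi R_0 \chi - \chi R_0 V R_0 \chi + \chi R_0 V R_V V R_0 \chi$, solving for $\chi R_V \chi$, and applying (\ref{eq:resbd})); differentiating the resolvent identity gives the same order bound for $\chi \dot R_V \chi$. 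Every summand is a chain of the form $\Gamma_0 \chi \cdot (\text{interior}) \cdot \chi \Gamma_0^*$ (with certain factors differentiated); splitting the chain into two Hilbert--Schmidt pieces, each endpoint contributes $O(\lambda^{(d-2)/2})$ by (\ref{eq:HSnorm}), each interior resolvent factor ($R_0$ or $R_V$) contributes $O(\lambda^{-1})$, and each interior $\Gamma_0$ or $\dot\Gamma_0$ contributes $O(\lambda^{-1/2})$. Bookkeeping these powers shows the three retained terms are of order at most $\lambda^{d-3}$, while every other summand has at least one additional interior resolvent factor and is therefore $O(\lambda^{d-4})$: for instance, $\tr \dot B_3$ and $\tr \dot{\mathscr{R}}$ carry two or more extra interior resolvents, and $\tr(B_1^* \dot B_2)$ gains one extra $R_0$ compared with $\tr(B_1^* \dot B_1)$, giving $O(\lambda^{d-3}) \cdot O(\lambda^{-1}) = O(\lambda^{d-4})$.

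The main technical obstacle is the uniform bookkeeping across all of the cross terms: the chain must be split at a location where both Hilbert--Schmidt factors are well-controlled (only a factor containing a $\Gamma_0 \chi$ endpoint supplies an HS bound of order $\lambda^{(d-2)/2}$, so interior $\Gamma_0$'s are treated in operator norm), and the derivative $d/d\lambda$ can fall on any of the constituent operators, each of which satisfies the same high-energy bound as the undifferentiated one by (\ref{eq:resbd})--(\ref{eq:HSnorm}). Carrying out this case-by-case analysis yields the claimed identity.
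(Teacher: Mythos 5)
Your proof is correct and follows essentially the same route as the paper: iterate the Born series in (\ref{eq:scattmatrix}) and estimate each term in trace norm via (\ref{eq:resbd})--(\ref{eq:HSnorm}) and $\|AB\|_{\tr}\le\|A\|_{HS}\|B\|_{HS}$. The only inessential difference is that you isolate the third-order term $B_3$ explicitly, whereas the paper lumps the entire tail of the series into a single remainder with trace norm $O(\lambda^{d-4})$ and operator norm $O(\lambda^{-3})$ (and the same bounds on its $\lambda$-derivative); also the remark about interior $\Gamma_0$ factors is vacuous here, as $\Gamma_0,\Gamma_0^*$ occur only at the endpoints of each chain.
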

\begin{proof}
We use the expression (\ref{eq:scattmatrix}) for the scattering matrix.  
Let $\chi\in C_c^{\infty}(\Real^d)$ satisfy $\chi V=V$.  Since, for large $\lambda\in 
\Real_+$,  $\chi R_V(\lambda) \chi =
\chi R_0(\lambda) \chi \sum _{j=0}^{\infty}(-VR_0(\lambda) \chi)^j,$
using the bounds (\ref{eq:resbd}-\ref{eq:HSnorm}) we see that 
$$\left \| \frac{d^j}{d\lambda^j}\left[ S(\lambda)- (I+B_1(\lambda)+ B_2(\lambda))
\right] \right \|_{\tr}
\leq C \lambda^{d-4} \; 
\text{for $\lambda \in \Real, \; \lambda \geq 1,\; j=0,\; 1$}
$$
and
$$\left\| \frac{d^j}{d\lambda^j}\left[ S(\lambda)- (I+B_1(\lambda)+ B_2(\lambda))
\right] \right\|_{L^2 \rightarrow L^2}
\leq C \lambda^{-3} \; \text{for $\lambda \in \Real, \; \lambda \geq 1,\; j=0,\; 1$}.
$$
Moreover, 
\begin{equation}\label{eq:B2easyestimate}
\|B_2(\lambda) \|_{L^2\rightarrow L^2} \leq C \lambda^{-2}, \;
\| B_2(\lambda)\|_{\tr} \leq C\lambda^{d-3}.
\end{equation}
Hence, by (\ref{eq:scattmatrix})
$$ \tr (S^*(\lambda)\dot{S}(\lambda))
= \tr [ (I + B_1^*(\lambda) )(\dot{B}_1(\lambda)+\dot{B}_2(\lambda))]
+ O(\lambda^{d-4}).$$
Similarly, 
\begin{align*}
\left| 
\tr[B_1^*(\lambda) \dot{B}_2(\lambda)]\right| & 
\leq \| B_1^*(\lambda) \|_{\tr} \| \dot{B}_2(\lambda)\|_{L^2\rightarrow L^2}\\ & 
\leq C \lambda^{d-2} \lambda^{-2}
\end{align*} 
for some constant $C$, since $\| \dot{B}_2\|_{L^2\rightarrow L^2}= O(\lambda^{-2})$.
%Likewise, since $\| \dot{B}_1\|_{L^2\rightarrow L^2}= O(\lambda^{-1})$
%and $ \| B_2 \|_{\tr}= O(\lambda^{d-3})$, we find $\tr( B_2(\lambda) \dot{B}_1(\lambda))
%= O(\lambda^{d-4})$.
\end{proof}

\begin{lemma}\label{l:trB1} Let $B_1$ be as defined in (\ref{eq:Bs}).  Then
$$\tr[\dot{B}_1(\lambda)]= -i c_d (d-2)\left(\int V(x) dx\right) \lambda^{d-3}$$
where $c_d=\pi(2\pi)^{-d} \vol(\Sphere^{d-1})$.
\end{lemma}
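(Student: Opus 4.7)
The plan is to compute $\tr \dot B_1(\lambda)$ directly from the explicit formula for $\Gamma_0(\lambda)$ by writing out the integral kernel of $B_1(\lambda)$ on $L^2(\mathbb{S}^{d-1})$ and evaluating it on the diagonal.

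First I would record that the adjoint $\Gamma_0(\lambda)^*:L^2(\mathbb{S}^{d-1})\to L^2(\mathbb{R}^d)$ is the map
\[
(\Gamma_0(\lambda)^* g)(x)=2^{-1/2}(2\pi)^{-d/2}\lambda^{(d-2)/2}\int_{\mathbb{S}^{d-1}} e^{i\lambda\,\omega'\cdot x}g(\omega')\,d\sigma(\omega'),
\]
so that $\Gamma_0(\lambda)V\Gamma_0(\lambda)^*$ is an integral operator on $L^2(\mathbb{S}^{d-1})$ with kernel
\[
K(\omega,\omega',\lambda)=\frac{\lambda^{d-2}}{2(2\pi)^d}\int_{\mathbb{R}^d}e^{-i\lambda(\omega-\omega')\cdot x}V(x)\,dx=\frac{\lambda^{d-2}}{2(2\pi)^d}\widehat V\bigl(\lambda(\omega-\omega')\bigr).
\]
Consequently $B_1(\lambda)=-2\pi i\,\Gamma_0(\lambda)V\Gamma_0(\lambda)^*$ has kernel
\[
K_{B_1}(\omega,\omega',\lambda)=-\frac{i\pi\lambda^{d-2}}{(2\pi)^d}\,\widehat V\bigl(\lambda(\omega-\omega')\bigr).
\]

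Next I would observe that since $V\in L^\infty_c(\mathbb{R}^d;\mathbb{R})$, the function $\widehat V$ is entire by Paley--Wiener, so $K_{B_1}(\,\cdot\,,\,\cdot\,,\lambda)$ is a smooth (indeed real-analytic) kernel on the compact manifold $\mathbb{S}^{d-1}\times\mathbb{S}^{d-1}$. In particular $B_1(\lambda)$ is trace class and its trace equals the integral of its kernel along the diagonal:
\[
\tr B_1(\lambda)=\int_{\mathbb{S}^{d-1}}K_{B_1}(\omega,\omega,\lambda)\,d\sigma(\omega)=-\frac{i\pi\lambda^{d-2}}{(2\pi)^d}\,\widehat V(0)\,\vol(\mathbb{S}^{d-1})=-ic_d\lambda^{d-2}\!\int V(x)\,dx,
\]
using $\widehat V(0)=\int V$ and the definition of $c_d$. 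The same smoothness estimate, applied after differentiating the kernel in $\lambda$, shows that $\dot B_1(\lambda)$ is trace class and that $\tr\dot B_1(\lambda)=\frac{d}{d\lambda}\tr B_1(\lambda)$; differentiating the previous display yields the claim.

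The only real issue is the exchange of derivative with trace and the justification that $B_1$ (and its $\lambda$-derivative) is trace class with trace given by the integral along the diagonal. This is not an obstacle in practice because the compact support of $V$ forces $\widehat V$ and all its derivatives to be uniformly bounded on compact $\lambda$-intervals, so the kernel and its $\lambda$-derivative are continuous on $\mathbb{S}^{d-1}\times\mathbb{S}^{d-1}$ and the standard trace formula for continuous kernels on a compact manifold applies. The rest is the elementary factor $\frac{d}{d\lambda}\lambda^{d-2}=(d-2)\lambda^{d-3}$.
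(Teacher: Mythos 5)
Your proof is correct and is essentially the same computation as the paper's: write out the Schwartz kernel of $\Gamma_0(\lambda)V\Gamma_0(\lambda)^*$ on $\Sphere^{d-1}\times\Sphere^{d-1}$ and evaluate the trace by integrating along the diagonal. The only cosmetic difference is order of operations: the paper differentiates the kernel in $\lambda$ first and then restricts to the diagonal (so it must observe that the term proportional to $x\cdot(\omega-\theta)$ vanishes when $\theta=\omega$), whereas you compute $\tr B_1(\lambda)=-ic_d\lambda^{d-2}\int V$ first and then differentiate the resulting scalar, which sidesteps that term entirely.
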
 
\begin{proof}Here we can evaluate the trace of $\dot{B}_1$  as the integral of
 its
Schwartz kernel over the diagonal.  Hence, with $d\sigma$ denoting the 
usual measure on $\Sphere^{d-1}$,
\begin{align*} & 
\tr[\dot{B}_1(\lambda)]\\ & = -\frac{\pi i}{(2\pi)^d} \int_{\omega \in \Sphere^{d-1}}\int_{x\in \Real^d}
e^{i\lambda x\cdot(\omega-\theta)}\restrict_{\theta=\omega} V(x) 
\left( (d-2) \lambda^{d-3} + 
\lambda^{d-2} i x\cdot(\omega -\theta)\right)\restrict_{\theta=\omega}dx \; 
d\sigma_{\omega}
\\ & = -\frac{\pi i(d-2)}{(2\pi)^d} \lambda^{d-3} \int_{\omega \in \Sphere^{d-1}}\int_{x\in \Real^d}
 V(x) dx \; d\sigma_{\omega}
\\ & = -\frac{\pi i(d-2)}{(2\pi)^d} \lambda^{d-3}\vol(\Sphere^{d-1}) \int V(x) dx.
\end{align*}
\end{proof}

\begin{lemma}\label{l:tr2B1s} Let $B_1$  be as defined in (\ref{eq:Bs}) and $\lambda \in (0,\infty)$.  Then
$\Im \tr[ B_1^*(\lambda) \dot{B_1}(\lambda)]=0.$
\end{lemma}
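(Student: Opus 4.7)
The plan is to exploit the reality of $V$ to show that $B_1(\lambda)$ is skew-adjoint on $L^2(\Sphere^{d-1})$, and then to deduce the conclusion from a one-line application of the cyclicity of the trace.

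Since $V$ is real-valued, multiplication by $V$ is a self-adjoint operator on $L^2(\Real^d)$, and therefore $\Gamma_0(\lambda)\, V\, \Gamma_0^*(\lambda)$ is self-adjoint on $L^2(\Sphere^{d-1})$. The explicit factor $-2\pi i$ in the definition (\ref{eq:Bs}) of $B_1$ then forces $B_1^*(\lambda)=-B_1(\lambda)$. Because differentiation in the real parameter $\lambda$ commutes with taking adjoints, the same skew-adjointness holds for $\dot B_1$: $\dot B_1^*(\lambda)=-\dot B_1(\lambda)$.

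From these two identities the lemma finishes quickly. I would compute $\overline{\tr[B_1^*\dot B_1]}=\tr[(B_1^*\dot B_1)^*]=\tr[\dot B_1^* B_1]$, then use cyclicity of the trace together with the two skew-adjointness relations to rewrite the right-hand side as $-\tr[B_1\dot B_1]=\tr[B_1^*\dot B_1]$. Thus the trace equals its own complex conjugate and is real, which is the claim.

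The only background check is that $B_1^*\dot B_1$ is trace-class, so that both $\overline{\tr A}=\tr A^*$ and cyclicity are valid; this is immediate from the Hilbert--Schmidt bound (\ref{eq:HSnorm}) on $\Gamma_0(\lambda)\chi$ (and its derivative analogue noted just after it) together with the compact support of $V$. There is no genuine obstacle: the lemma is essentially forced by the reality of $V$ and the purely imaginary prefactor $-2\pi i$ in the definition of $B_1$, and the whole content is a short piece of sign bookkeeping once skew-adjointness is recognized.
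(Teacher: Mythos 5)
Your proof is correct, and it takes a cleaner, more abstract route than the paper's. The paper expands $B_1^*(\lambda)\dot B_1(\lambda)$ explicitly in terms of $\Gamma_0$, $\dot\Gamma_0$ and $V$, regroups via cyclicity into the form $4\pi^2\tr\{\Gamma_0 V[\Gamma_0^*\dot\Gamma_0+\dot\Gamma_0^*\Gamma_0]V\Gamma_0^*\}$, and then observes that this is the trace of a manifestly self-adjoint operator (since $\Gamma_0^*\dot\Gamma_0+\dot\Gamma_0^*\Gamma_0=\frac{d}{d\lambda}(\Gamma_0^*\Gamma_0)$ is self-adjoint and is sandwiched symmetrically). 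You instead abstract the essential structure away: you observe once that the reality of $V$ plus the $-2\pi i$ prefactor make $B_1$ skew-adjoint, note that $\dot B_1$ is then also skew-adjoint because $\frac{d}{d\lambda}$ commutes with ${}^*$ for a real parameter, and conclude by the general identity $\overline{\tr A}=\tr A^*$ together with cyclicity, never opening up $B_1$ at all. Both proofs ultimately rest on the same two facts (reality of $V$ and cyclicity of the trace, with trace-class justification coming from the Hilbert--Schmidt bounds near (\ref{eq:HSnorm})), but yours avoids the intermediate bookkeeping with $\Gamma_0$ and makes the mechanism — a skew-adjoint operator times its derivative has real trace — transparent; the paper's version has the minor advantage of exhibiting the self-adjoint operator whose trace appears, which can be convenient if one later wants more than just the vanishing of the imaginary part.
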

\begin{proof}
We have 
\begin{equation*}
\tr[B_1^*(\lambda) \dot{B_1}(\lambda)]
= 4\pi^2 \tr \{ \Gamma_0(\lambda) V \Gamma_0^*(\lambda)
[ \dot{\Gamma}_0(\lambda) V\Gamma_0^*(\lambda)+
 \Gamma_0(\lambda) V \dot{\Gamma}_0^*(\lambda)]\}.
\end{equation*}
We use the cyclicity of the trace to write this as 
\begin{equation*}
\tr[B_1^*(\lambda) \dot{B_1}(\lambda)]
= 4\pi^2 \tr\{ \Gamma_0(\lambda) V [\Gamma_0^*(\lambda)\dot{\Gamma}_0(\lambda)+
\dot{\Gamma}_0^*(\lambda) \Gamma_0(\lambda)] V \Gamma_0^*(\lambda)\}. 
\end{equation*}
Since this is the trace of a self-adjoint operator, its imaginary part is $0$.
\end{proof}

Note that for $\lambda \in (0,\infty)$, $S(\lambda)$ is unitary
so that $[\frac{d}{d\lambda}\det S(\lambda)]/\det S(\lambda)$ is pure imaginary.
Thus, by Lemmas \ref{l:tracewithBs}, \ref{l:trB1}, 
and \ref{l:tr2B1s}, 
to prove Theorem \ref{thm:derivasmpt} it remains only to estimate the 
 imaginary part of $\tr [ \dot{B}_2(\lambda)]$.
%Using Lemmas \ref{l:tracewithBs}, \ref{l:trB1}, and \ref{l:tr2B1s}
%and the fact that $\tr[ S^*(\lambda) \dot{S}(\lambda)]$ is 
%pure imaginary for $\lambda \in (0,\infty)$,
Hence the proof of Theorem \ref{thm:derivasmpt} is completed by 
the following lemma.
\begin{lemma}\label{l:B2est}
For $\lambda \in (0,\infty)$, $\Im[\tr(\dot{B}_2(\lambda) )]=O(\lambda^{d-7/2})$
as $\lambda \rightarrow \infty.$
\end{lemma}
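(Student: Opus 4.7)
The plan is to reduce $\Im\tr\dot B_2$ to a principal-value integral in momentum space, exploiting real-valuedness of $V$. First, applying the identity $\Gamma_0(\lambda)^*\Gamma_0(\lambda)=(R_0(\lambda)-R_0(\lambda)^*)/(2\pi i)$ (valid for $\lambda>0$) together with cyclicity of the trace,
\[
\tr B_2(\lambda)=2\pi i\,\tr(V R_0 V\Gamma_0^*\Gamma_0)=\tr(V R_0 V R_0)-\tr(V R_0 V R_0^*).
\]
The second trace is real because $(V R_0 V R_0^*)^*=R_0 V R_0^* V$ has the same trace by cyclicity. Hence $\Im\tr B_2(\lambda)=\Im\tr(V R_0 V R_0)$ and
\[
\Im\tr\dot B_2(\lambda)=\partial_\lambda\,\Im\tr(V R_0(\lambda) V R_0(\lambda)).
\]
(For $d\geq 4$ the individual traces on the right require regularization, but their difference is $\tr B_2$, which is always well defined.)

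Next, using the explicit kernel of $\Gamma_0$ together with the Fourier representation of $R_0$, a short computation gives
\[
\Im\tr(V R_0 V R_0)=c\,\lambda^{d-2}\int_{\Real^d}|\hat V(\zeta)|^2\,\Phi(\zeta,\lambda)\,d\zeta, \qquad
\Phi(\zeta,\lambda):=\int_{\Sphere^{d-1}}\operatorname{PV}\frac{d\omega}{|\zeta|^2+2\lambda\omega\cdot\zeta},
\]
where real-valuedness of $V$ has been used to collapse the numerator to $|\hat V|^2$. Introducing $t=\omega\cdot\hat\zeta$ and $a=|\zeta|/(2\lambda)$, the angular integral reduces to
\[
\Phi(\zeta,\lambda)=\frac{\vol(\Sphere^{d-2})}{2\lambda|\zeta|}\,H_a, \qquad H_a:=\operatorname{PV}\int_{-1}^{1}\frac{(1-t^2)^{(d-3)/2}}{t+a}\,dt.
\]
The function $H_a$ is odd in $a$ (by the $t\to -t$ antisymmetry of the integrand), so $H_0=0$ and $|H_a|\leq C|a|$ for $a\in[0,1/2]$. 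This yields $|\Phi(\zeta,\lambda)|\leq C\lambda^{-2}$ for $|\zeta|\leq\lambda$. For $|\zeta|\geq 3\lambda$ the denominator has a definite sign and $|\Phi|\leq C|\zeta|^{-2}$. A parallel estimate gives $|\partial_\lambda\Phi|\leq C\lambda^{-3}(1+\log\lambda)$ uniformly.

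Combining these pointwise bounds with $|\hat V|^2\in L^1(\Real^d)$, I would conclude
\[
|\partial_\lambda\Im\tr(V R_0 V R_0)|\leq C\,\lambda^{d-5}(1+\log\lambda)\,\|V\|_{L^2}^2=O(\lambda^{d-7/2}).
\]
The main obstacle is the transition window $|\zeta|\sim 2\lambda$, where $H_a$ can acquire an endpoint singularity as $a\to 1$ (logarithmic for $d=3$, slightly worse for $d=2$). Handling this uniformly for $d\geq 2$ requires combining the pointwise bounds above with $L^2$-tail control on $\hat V$; the resulting contribution is in any case absorbed into the logarithmic correction.
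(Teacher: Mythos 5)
Your reduction of $\Im\tr B_2$ to a momentum-space principal-value integral is the same starting point as the paper's (compare (\ref{eq:B2expr}) and Lemma~\ref{l:gprop}), and your observation that $H_a$ is odd in $a$ is a tidy way to see that the angular integral vanishes at $\eta=0$. The first reduction, $\Im\tr B_2=\Im\tr(VR_0VR_0)$ via $\Gamma_0^*\Gamma_0=(R_0-R_0^*)/(2\pi i)$, is conceptually appealing but somewhat delicate ($\Gamma_0$ alone is not bounded on $L^2$, so the cyclic shuffle that moves $\Gamma_0$ next to $\Gamma_0^*$ needs justification with cut-offs); the paper instead computes $\tr B_2$ directly by integrating the Schwartz kernel over the diagonal.

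The real gap is in the differentiation step. Having written $\Im\tr B_2 = c\lambda^{d-2}\int|\hat V(\zeta)|^2\Phi(\zeta,\lambda)\,d\zeta$ with $\zeta$ fixed and $\lambda$ only in $\Phi$, you are forced to estimate $\partial_\lambda\Phi$. Near the critical sphere $|\zeta|\approx 2\lambda$ (i.e. $a\to 1$), $H_a$ behaves like $(1-a)^{-1/2}$ when $d=2$ (and for $d=4$ it is bounded but $H_a'$ still blows up like $(1-a)^{-1/2}$), so for $d=2$ one has $\partial_\lambda\Phi\sim\lambda^{-3}(1-a)^{-3/2}$, which is \emph{not} locally integrable in $\zeta$ across the critical sphere. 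Consequently the asserted uniform bound $|\partial_\lambda\Phi|\leq C\lambda^{-3}(1+\log\lambda)$ is false, the integral $\int|\hat V|^2\,\partial_\lambda\Phi$ does not even converge as written for $d=2$, and the remark that the transition window is ``absorbed into the logarithmic correction'' is unjustified. In fact that window is the \emph{dominant} contribution; it is precisely what produces the $\lambda^{d-7/2}$ in the statement, which is strictly larger than your $\lambda^{d-5}\log\lambda$.

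The paper avoids this obstruction by scaling so that $\lambda$ sits inside the Fourier transform: writing $\Im\tr B_2$ as $\lambda^{2(d-2)}\int|\hat V(\lambda\eta)|^2 g(|\eta|)\,d\eta$ (Lemma~\ref{l:B2simp}), one lets $\partial_\lambda$ fall on $\hat V(\lambda\eta)$, producing $\eta_j\hat V_j(\lambda\eta)\overline{\hat V}(\lambda\eta)$ with $\hat V_j=\widehat{(-ix_jV)}$ (available because $V\in L^\infty_c$) while the singular factor $g(|\eta|)$ is \emph{never differentiated}. The $|\eta-2|^{-1/2}$ singularity of $g$ then remains integrable and is handled by splitting into the regions $0\leq\rho\leq 2-\lambda^{-1}$, $\rho\geq 2+\lambda^{-1}$, and $|\rho-2|\leq\lambda^{-1}$ as in (\ref{eq:B2,1})--(\ref{eq:B2,3}), with the transition window controlled by the restriction bound $\int_{\Sphere^{d-1}}|\hat V(\lambda\theta)|^2\,d\sigma_\theta\leq C\lambda^{-(d-1)}$. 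To complete your argument you would need to shift the derivative onto $\hat V$ in exactly this way; without that, the pointwise $\partial_\lambda\Phi$ estimate cannot be made to work.
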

The proof of this lemma is the content of the next subsection.

\subsection{Proof of Lemma \ref{l:B2est}}
In this section we study $\Im \tr B_2(\lambda)$, and 
prove Lemma \ref{l:B2est}.
Writing the integral as a trace of the Schwartz kernel over the diagonal,
$$\tr B_2(\lambda)= \pi i(2\pi)^{-d} \lambda^{d-2}\int _{\omega \in \Sphere^{d-1}}
\int _{x\in \Real^d} e^{-i\lambda x\cdot \omega} V(x) W(x,\omega,\lambda) dx d\sigma_\omega$$
where 
$W(x,\omega,\lambda)= (R_0(\lambda)(V(\bullet)e^{i\lambda \omega \cdot \bullet })(x)$.
Using Parseval's formula to evaluate the $x$ integral, we get 
$$\tr B_2(\lambda)=  \frac{i}{2} (2\pi)^{-2d+1} \lambda^{d-2}
\lim_{\epsilon \downarrow 0}\int _{\omega \in \Sphere^{d-1}}
\int _{\eta' \in \Real^d } \frac{|\hat{V}(\eta' -\lambda \omega)|^2}{|\eta'|^2-(\lambda+i\epsilon)^2} d\eta' d\sigma_{\omega}.$$
%Here $1/(|\eta|^2-(\lambda+i0)^2)$ denotes the distribution obtained by taking
%the limit as $\epsilon \downarrow 0$ of the distribution 
%$1/(|\eta|^2-(\lambda+i\epsilon)^2)$.
Making the substitution $\eta'=\lambda \eta +
\lambda \omega $ gives
\begin{equation}\label{eq:B2expr}
\tr B_2(\lambda)= \frac{ i}{2} (2\pi)^{-2d+1} \lambda^{2(d-2)}
\lim_{\epsilon \downarrow 0}\int _{\omega \in \Sphere^{d-1}}
\int _{\eta \in \Real^d } 
\frac{|\hat{V}(\lambda \eta)|^2}{|\eta+\omega|^2-1-i\epsilon}
d\eta \; d\sigma_{\omega}.
\end{equation}  
%Note that if
%we assume some modest regularity on $V$, namely
% $V\in H^1_0(\Real^d;\Real)$, then it would be relatively easy to use the 
%expression (\ref{eq:B2expr}) to show that $\frac{d}{d\lambda} \tr B_2(\lambda)
%=O (\lambda^{d-4})$.  

We shall use the following lemma to justify some manipulations in our 
estimate of the derivative of $\Im \tr B_2(\lambda)$.  Since we shall want to 
estimate the size of the derivative of the
imaginary part of (\ref{eq:B2expr}), we take (twice) the 
average of the distributions $\lim_{\epsilon \downarrow 0}
1/(|\eta+\omega|^2-1\pm i \epsilon)$ in the 
following lemma and in Lemma \ref{l:gprop}.
\begin{lemma}\label{l:nodeltas} Let $\psi \in C_0^\infty(\Real)$ be supported in a neighborhood of $0$, and let $\phi \in C_0^\infty(\Real^d)$.  Then
\begin{equation}\label{eq:rhoat0}
\lim_{\delta \downarrow 0} \lim_{\epsilon \downarrow 0} 
\int_{\omega \in \Sphere^{d-1}} \int _{\eta \in \Real^d}
\sum _{\pm} \frac{1}{ |\eta + \omega|^2-1\pm i\epsilon}
\psi (|\eta|/\delta)\phi(\eta) d\eta d\sigma_\omega=0
\end{equation}
and 
\begin{equation}\label{eq:rhoat2}
\lim_{\delta \downarrow 0} \lim_{\epsilon \downarrow 0} 
\int_{\omega \in \Sphere^{d-1}} \int_{\eta \in \Real^d}
\sum_{\pm} \frac{1}{ |\eta + \omega|^2-1\pm i\epsilon}
\psi ((|\eta|-2)/\delta)\phi(\eta) d\eta d\sigma_\omega=0.
\end{equation}
\end{lemma}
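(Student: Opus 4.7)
The plan is to use rotational symmetry to reduce the $\omega$-integral to a one-dimensional principal-value integral, analyze its behavior near the critical loci $|\eta|=0$ and $|\eta|=2$, and conclude by volume estimates on the cutoff's support.

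First, I would perform the $\omega$-integration for fixed $\eta\neq 0$. Writing $|\eta+\omega|^2-1 = 2|\eta|(u+a)$ with $u = \hat\eta\cdot\omega$ and $a := |\eta|/2$, and using the standard reduction
\begin{equation*}
\int_{\Sphere^{d-1}} f(\hat\eta\cdot\omega)\,d\sigma_\omega = \vol(\Sphere^{d-2})\int_{-1}^1 f(u)(1-u^2)^{(d-3)/2}\,du,
\end{equation*}
the sphere-integrand becomes $4|\eta|(u+a)g(u)/\bigl(4|\eta|^2(u+a)^2+\epsilon^2\bigr)$ with $g(u) := (1-u^2)^{(d-3)/2}$. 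The change of variable $v = 2|\eta|(u+a)$ together with $\int v\,dv/(v^2+\epsilon^2) = \tfrac{1}{2}\log(v^2+\epsilon^2)$ yields the explicit $\epsilon\downarrow 0$ limit; splitting $g(u) = g(-a) + (g(u)-g(-a))$ removes the apparent singularity at $u=-a$ and gives
\begin{equation*}
A_0(\eta) = \frac{2\vol(\Sphere^{d-2})}{|\eta|}\left\{g(-a)\log\left|\tfrac{1+a}{1-a}\right| + \int_{-1+a}^{1+a}\frac{g(-a+t)-g(-a)}{t}\,dt\right\},
\end{equation*}
where $A_0(\eta)$ denotes the $\epsilon\downarrow 0$ limit of the $\omega$-integral.

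Next, I would estimate $A_0$ near the two critical loci. For $\eta\to 0$ (so $a\to 0$), the log term equals $2a + O(a^3)$, and the remainder at $a=0$ reduces to $\int_{-1}^1 (g(t)-1)/t\,dt = 0$ by odd symmetry (since $g$ is even); a Leibniz/boundary-term computation shows the remainder is also $O(a)$ for small $a$. The $1/|\eta|$ prefactor then gives $A_0(\eta)=O(1)$ as $\eta\to 0$. For $|\eta|\to 2$ (so $a\to 1$), both the log factor and the remainder contribute at worst $O(\log|1-a|)$, so $|A_0(\eta)|\leq C\log(1/\bigl||\eta|-2\bigr|)$, which is locally integrable (and in fact $A_0\equiv 0$ on $|\eta|<2$ when $d=2$, via a direct Weierstrass-substitution computation).

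Finally, Fubini (valid for each $\epsilon>0$) together with the pointwise bounds on $A_0$ and dominated convergence reduce the two limits to volume estimates. For (\ref{eq:rhoat0}), the support of $\psi(|\eta|/\delta)$ has volume $O(\delta^d)$ on which $A_0\phi$ is uniformly bounded, giving $O(\delta^d)\to 0$. For (\ref{eq:rhoat2}), the support is a spherical shell of thickness $O(\delta)$ and volume $O(\delta)$, and the logarithmic bound on $A_0$ yields $O(\delta\log(1/\delta))\to 0$. The main obstacle is establishing the vanishing of the remainder integral at $a=0$ via odd symmetry, which secures the crucial $O(1)$ bound on $A_0$ near the origin despite the $1/|\eta|$ prefactor; without this cancellation the integral over the shrinking $\delta$-support would not go to zero.
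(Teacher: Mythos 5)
Your approach is genuinely different from the paper's. The paper does not compute the spherical average $A_0(\eta)$ at all; instead it uses polar coordinates in $\eta$ together with the distributional identity $\lim_{\epsilon\downarrow 0}\sum_\pm (x\pm i\epsilon)^{-1} = 2\,\tfrac{d}{dx}\log|x|$ to integrate by parts in $\rho=|\eta|$ once, producing the absolutely integrable kernel $\log|\rho+2\theta\cdot\omega|$, and then estimates the spherical average of this logarithm near $\rho=2$ by a Morse-lemma argument (giving an $O(\sqrt{|\rho-2|})$ modulus of continuity that beats the $O(\delta^{-1/2})$ size of $\partial_\rho[\psi((\rho-2)/\delta)\cdots]$). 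That route avoids any interchange of the $\epsilon\downarrow 0$ limit with a $d$-dimensional integral.

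That interchange is where your argument has a genuine gap. You invoke ``Fubini together with the pointwise bounds on $A_0$ and dominated convergence,'' but you never exhibit an $\epsilon$-uniform dominating function for $A_\epsilon(\eta)=\int_{\Sphere^{d-1}}\sum_\pm(|\eta+\omega|^2-1\pm i\epsilon)^{-1}d\sigma_\omega$. The natural candidate is a bound on $|A_0|$, but that does not dominate $A_\epsilon$; indeed, the paper's own estimate (Lemma \ref{l:gprop}) of the $\epsilon>0$ spherical average carries a factor $1+|\log(\epsilon/(|\eta|^2-2|\eta|))|$ which grows as $\epsilon\downarrow 0$, so there is no blanket $\epsilon$-independent bound near $|\eta|\in\{0,2\}$. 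A direct estimate of $|A_\epsilon-A_0|$ near $\eta=0$ gives $O(\epsilon/|\eta|^2)$, which is fine for $d\geq 3$ but not integrable when $d=2$; your parenthetical Weierstrass observation that $A_0\equiv 0$ on $|\eta|<2$ for $d=2$ helps, but you still need a quantitative $\epsilon$-dependent bound on $A_\epsilon$ itself there to pass the limit. This step needs to be supplied (the paper's integration-by-parts device is essentially designed to sidestep exactly this difficulty, since the section's standing assumption is $d\geq 2$).

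Two further points, one of which is an actual error. First, the ``Leibniz/boundary-term computation'' you use to get the $O(a)$ bound on the remainder near $a=0$ runs into the endpoint behavior of $g(u)=(1-u^2)^{(d-3)/2}$: for $d=2$ the boundary terms involve $g(\pm 1)=\infty$, and for even $d\geq 4$ the derivative $g'$ is unbounded at $u=\pm 1$, so the differentiation under the integral and the Leibniz boundary terms are not free. The $O(a)$ conclusion is in fact correct (one clean way is to pair $u\leftrightarrow -u$ using that $g$ is even, which produces an explicit overall factor of $a$), but your sketch as written does not establish it uniformly in $d\geq 2$. Second, the stated bound $|A_0(\eta)|\leq C\log(1/||\eta|-2|)$ near $|\eta|=2$ is wrong for $d=2$, $|\eta|>2$: the Weierstrass substitution gives $A_0\sim C(|\eta|-2)^{-1/2}$ there (this is exactly the $||\eta|^2-4|^{-1/2}$ in the paper's Lemma \ref{l:gprop}). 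It is harmless for the final conclusion, since the shell integral is then $O(\delta^{1/2})$ rather than $O(\delta\log(1/\delta))$, but the claimed rate and the bound it rests on are incorrect and should be fixed.
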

\begin{proof}
We give the proof of (\ref{eq:rhoat2}), as the proof of (\ref{eq:rhoat0})
is similar.

Using polar coordinates $\eta =\rho \theta$ and \cite[(3.2.13)]{ho1},
\begin{multline}
\lim_{\epsilon \downarrow 0} 
\int_{\omega \in \Sphere^{d-1}} \int_{\eta \in \Real^d}
\sum _{\pm} \frac{1}{ |\eta + \omega|^2-1\pm i\epsilon}
\psi ((|\eta|-2)/\delta)\phi(\eta) d\eta d\sigma_\omega \\
=-2\int_{\omega \in \Sphere^{d-1}} \int_{\theta \in \Sphere^{d-1}}\int_0^\infty 
\log|\rho+2\theta \cdot \omega|\frac{\partial}{\partial \rho}
[\rho^{d-2} 
\psi ((\rho-2)/\delta)\phi(\rho \theta)] d\rho d\sigma_\theta d\sigma_\omega.
\end{multline}
The integrand is absolutely integrable, and we may change the order of 
integration as convenient.  We would like to understand the behavior of
\begin{equation}
\label{eq:sphereint}
 \int_{\omega \in \Sphere^{d-1}} \log|\rho+2\theta \cdot \omega|d\sigma_\omega
\end{equation}
near $\rho=2$.  If $\rho$ is near $2$,
the integrand in (\ref{eq:sphereint}) is smooth away from a neighborhood
of $\omega=-\theta$.  Therefore,
the integral over $\{ \omega \in \Sphere^{d-1}: \; |\omega+\theta|\geq
\epsilon'>0\}$
 results in a function depending smoothly on 
$\rho$ near $\rho=2$.  Thus we concentrate on a neighborhood
 of $\omega=-\theta$ in $\Sphere^{d-1}$.  The function 
$\theta\cdot \omega$ has a nondegenerate critical point at $\theta=-\omega$.
By first integrating over level sets in $\omega\in \Sphere^{d-1}$
 of $\theta \cdot \omega$ and then using the Morse lemma,
we may then introduce a variable $s$ so that 
$$
\int_{\{\omega \in \Sphere^{d-1}: |\theta \cdot \omega +1|<\epsilon'\} } 
\log|\rho+ 2\theta \cdot \omega|d\sigma_\omega
= \int_{|s|<\epsilon''} \log |\rho -2 + s^2|h(s)ds$$
for some smooth function $h$.  But then 
$$\int_{|s|<\epsilon''} \log |\rho -2 + s^2|h(s)ds - 
\int_{|s|<\epsilon''} \log |2 -2 + s^2|h(s)ds 
= \int_{|s|<\epsilon''}  \log \left| \frac{ \rho -2+s^2}{s^2} \right| h(s)ds.
$$

By a change of variables, for $\rho \not =2$, $\rho $ near $2$,
\begin{multline}\label{eq:covsqrt}
\int_{|s|<\epsilon''}  \log \left| \frac{ \rho -2+s^2}{s^2} \right| h(s)ds \\
= \sqrt{|\rho-2|}\int _{|s|<\epsilon''/\sqrt{|\rho-2|}}
\log \left|  \frac{ \sgn(\rho-2)+s^2}{s^2}\right| h(s \sqrt{|\rho-2|})ds.
\end{multline}
Since $\log \left[  s^{-2} |\sgn(\rho-2) +s^2|\right]=O(s^{-2})$ when 
$|s|\rightarrow \infty$, the integral on the right hand side of 
(\ref{eq:covsqrt}) is bounded independently of $\rho$ near $2$, hence
\begin{equation}\label{eq:diffest}
\int_{\omega \in \Sphere^{d-1}} \log|\rho+2\theta \cdot \omega|d\sigma_\omega
- \int_{\omega \in \Sphere^{d-1}} \log|2+2\theta \cdot \omega|d\sigma_\omega = 
O(\sqrt{|\rho-2|}).
\end{equation}
But then
\begin{align} \label{eq:limit} \nonumber
& 
\iint_{\omega,\theta \in \Sphere^{d-1}} 
%\int_{\theta \in \Sphere^{d-1}}
\int_0^\infty 
\log|\rho+2\theta \cdot \omega|\frac{\partial}{\partial \rho}
[\rho^{d-2} 
\psi ((\rho-2)/\delta)\phi(\rho \theta)] d\rho d\sigma_\theta d\sigma_\omega
\nonumber \\ & 
= \nonumber
\iint_{\omega,\theta \in \Sphere^{d-1}} 
%\int_{\theta \in \Sphere^{d-1}}
\int_0^\infty 
\left[ \log|\rho+2\theta \cdot \omega|- \log | 2+2\theta \cdot \omega|+
\log |2+2\theta \cdot \omega|\right] \\ & \nonumber \hspace{40mm} \times
\frac{\partial}{\partial \rho}
[\rho^{d-2} 
\psi ((\rho-2)/\delta)\phi(\rho \theta)] d\rho d\sigma_\theta d\sigma_\omega
 \\ & =
\iint_{\omega,\theta \in \Sphere^{d-1}} 
%\int_{\theta \in \Sphere^{d-1}}
\int_0^\infty 
\left[ \log|\rho+2\theta \cdot \omega|- \log | 2+2\theta \cdot \omega |\right]
\frac{\partial}{\partial \rho}
[\rho^{d-2} 
\psi ((\rho-2)/\delta)\phi(\rho \theta)] d\rho d\sigma_\theta d\sigma_\omega
\end{align}
since
$$ 
\iint_{\omega ,\theta\in \Sphere^{d-1}} \int_0^\infty 
\log | 2+2\theta \cdot \omega|
\frac{\partial}{\partial \rho}
[\rho^{d-2} 
\psi ((\rho+2)/\delta)\phi(\rho \theta)] d\rho d\sigma_\theta d\sigma_\omega=0$$
by the support properties of $\psi$ and using that 
$\log|2+2\theta \cdot \omega|$ is independent of $\rho$.
Finally, using (\ref{eq:diffest}) shows that the final
integrand in (\ref{eq:limit}) is $O(\delta^{-1/2})$ in $L^\infty$, 
$O(\delta^{1/2})$ in $L^1$.  Hence the limit 
as $\delta \downarrow 0$ is $0$ as claimed.
\end{proof}

In practice to understand (\ref{eq:B2expr})
 we shall want to evaluate the $\omega$ integrals first, and interchange
the order of the limit and the integral over $\eta\in \Real^d$.
We shall use the following two lemmas to rigorously justify this and 
to understand the limit as $\epsilon \downarrow 0$ of the $\omega$ integral.
\begin{lemma}\label{l:gcont1} Let $h\in C_c^\infty ( (-1,1);\Real)$.  Then there is a $C>0$ so that for $0<\epsilon <1$, $t>0$, 
\begin{equation}\label{eq:h1est1}
\left| \Re \int \frac{1}{t^2+2ts-i\epsilon}h(s) ds\right| \leq 
C \frac{1}{t(t+1)}(1+ |\log |\epsilon/(t^2-2t)||).
\end{equation}
Moreover, if $t>0$,  then setting 
$$g(t)=\lim_{\epsilon\downarrow 0} \Re \int \frac{1}{t^2+2ts-i\epsilon}h(s) ds$$
the function $g $ is continuous function on $(0,\infty)$ and satisfies
$|g(t)|\leq C/(t(t+1))$.
\end{lemma}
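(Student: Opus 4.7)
The plan is to move the singularity to the origin, Taylor-expand the resulting amplitude, and treat the constant and remainder pieces separately.

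First I substitute $u = t^2+2ts$ and set $H(u) \defeq h((u-t^2)/(2t))$, so that $H\in C_c^\infty(\Real)$ is supported in $[a,b]$ with $a=t^2-2t$, $b=t^2+2t$, and
\begin{equation*}
\Re \int \frac{h(s)}{t^2+2ts-i\epsilon}\, ds \;=\; \frac{1}{2t}\int \frac{u\, H(u)}{u^2+\epsilon^2}\, du.
\end{equation*}
By Taylor's theorem with integral remainder, $H(u)=H(0)+u H_1(u)$ with $H(0)=h(-t/2)$ and $H_1(u)\defeq \int_0^1 H'(\tau u)\,d\tau$ smooth; in particular $\|H_1\|_\infty \leq \|H'\|_\infty = \|h'\|_\infty/(2t)$. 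Substituting the decomposition gives
\begin{equation*}
\frac{1}{2t}\int \frac{u\, H(u)}{u^2+\epsilon^2}\, du \;=\; \frac{H(0)}{4t}\log\frac{(t^2+2t)^2+\epsilon^2}{(t^2-2t)^2+\epsilon^2} \;+\; \frac{1}{2t}\int \frac{u^2\, H_1(u)}{u^2+\epsilon^2}\, du.
\end{equation*}

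For the explicit logarithm I will use the elementary comparison $\tfrac{1}{2}|\log\tfrac{b^2+\epsilon^2}{a^2+\epsilon^2}|\leq C+|\log(\epsilon/|a|)|$ (sharper in each of the regimes $|a|\gg\epsilon$ and $|a|\ll\epsilon$), which produces the factor $1+|\log|\epsilon/(t^2-2t)||$; the prefactor $|H(0)|/(4t)$ supplies the $1/t$ in $1/(t(t+1))$. Because $\supp h$ is a compact subset of $(-1,1)$, the factor $h(-t/2)$ vanishes identically once $t$ lies in a neighborhood of or above $2$, so this logarithmic contribution is confined to a range of $t$ on which $|t^2-2t|$ is bounded below and $t$ itself is bounded above, which is what makes the desired simplified logarithmic form work. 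For the remainder integral I bound $|u^2 H_1(u)/(u^2+\epsilon^2)|\leq|H_1(u)|$ and use $b-a=4t$ to get a contribution at most $\|h'\|_\infty/t$; for $t\in(0,1]$ this already matches $C/(t(t+1))$. For the large-$t$ regime, where $H(0)=0$ and the log term is absent, I instead rewrite the remainder as $\frac{1}{2t}\int_a^b \frac{uH(u)}{u^2+\epsilon^2}\,du$ and use $u\geq a=t(t-2)>0$ to dominate the integrand by $|H(u)|/u$; integration over $[a,b]$ yields $O(\log(b/a))=O(\log((t+2)/(t-2)))=O(1/t)$, and therefore the improved bound $O(1/t^2)=O(1/(t(t+1)))$.

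For the limit $g(t)$: dominated convergence applied to the remainder integral yields $\int_a^b H_1(u)\,du$, while the explicit log term converges to $\tfrac{h(-t/2)}{2t}\log\left|\tfrac{t+2}{t-2}\right|$, giving the closed form
\begin{equation*}
g(t) \;=\; \frac{h(-t/2)}{2t}\log\left|\frac{t+2}{t-2}\right| + \frac{1}{2t}\int_{a}^{b} H_1(u)\,du.
\end{equation*}
Each summand is continuous on $(0,\infty)\setminus\{2\}$; the apparent logarithmic blowup at $t=2$ is killed because $h(-t/2)$ vanishes identically in a neighborhood of $t=2$ (compactness of $\supp h$ in $(-1,1)$), so $g$ is continuous at $t=2$ as well. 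The bound $|g(t)|\leq C/(t(t+1))$ then follows from the same three-regime analysis. The main obstacle I expect is arranging the $\epsilon$-dependent logarithmic estimate so that $|\log|\epsilon/(t^2-2t)||$ appears cleanly rather than as a sum $|\log\epsilon|+|\log|t^2-2t||$; exploiting the support of $h$ to restrict attention to a compact range of $t$ in that estimate is the step that resolves this.
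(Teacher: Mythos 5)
Your argument is correct, but it follows a genuinely different route from the paper's. The paper substitutes $u=s+t/2$ to reach $\Re\,\tfrac1t\int(2u-i\epsilon/t)^{-1}h(u-t/2)\,du$ and then integrates by parts once, converting the singular kernel into $\log(2u-i\epsilon/t)$; the estimate and the continuity of $g$ are then read off from the local integrability of the logarithm together with the smoothness and support of $h$. You instead substitute $u=t^2+2ts$ (which removes the $t$-dependence from the denominator entirely), pass to the real part $\tfrac{1}{2t}\int u H(u)/(u^2+\epsilon^2)\,du$, and Taylor-split $H(u)=H(0)+uH_1(u)$; this isolates an explicit elementary primitive $\tfrac12\log(u^2+\epsilon^2)$ plus a regular remainder. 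The paper's route is shorter (one integration by parts, one estimate), while yours yields a more explicit closed form for $g$ and makes the role of the principal value transparent. Both hinge, in the end, on the same structural fact you correctly identify: $\supp h\Subset(-1,1)$ keeps $h(-t/2)$ supported where $t$ is bounded above and away from $2$.

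One detail worth tightening. Your stated ``elementary comparison'' $\tfrac12\bigl|\log\tfrac{b^2+\epsilon^2}{a^2+\epsilon^2}\bigr|\le C+|\log(\epsilon/|a|)|$ is not true for arbitrary $a,b,\epsilon$ (take $|a|\asymp\epsilon$ small and $b\asymp1$), and your justification that ``$|t^2-2t|$ is bounded below'' on the relevant $t$-range is incorrect: $|a|=t(2-t)\to0$ as $t\to0^+$. What actually makes the log term harmless where $h(-t/2)\ne0$ is that the \emph{ratio} $b/|a|=(t+2)/(2-t)$ is bounded (because $2-t$ is bounded below there), from which $\tfrac{b^2+\epsilon^2}{a^2+\epsilon^2}\le (b/|a|)^2$ is bounded uniformly in $\epsilon$ and the $\epsilon$-dependent log factor in (\ref{eq:h1est1}) is not even needed for this piece. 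Likewise, your large-$t$ bound $\log\bigl((t+2)/(t-2)\bigr)=O(1/t)$ requires $t$ bounded away from $2$; that is fine, since for any fixed $T_0$ your first estimate $\|h'\|_\infty/t$ already gives $C/(t(t+1))$ on $(0,T_0]$, but you should say so. These are presentation slips, not gaps in the method.
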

\begin{proof} If $t\geq 2+\delta>2$ the result is immediate.  Hence we 
restrict ourselves to $0<t<4$.

   Use $\log z $ to denote the principal branch of the logarithm.  
Then for $\epsilon >0$
\begin{align}\label{eq:rewrite1}
\Re \int \frac{1}{t^2+2ts-i\epsilon}h(s) ds & = \Re \frac{1}{t}\int 
\frac{1}{t+2s-i\epsilon/t}h(s) ds\nonumber \\ & = \Re \frac{1}{t}\int 
\frac{1}{2u-i\epsilon/t}h(u-t/2) du \nonumber\\ & 
= -  \Re \frac{1}{2t}\int 
\log(2u-i\epsilon/t)h'(u-t/2) du
\end{align}
with the change of variable $u= s+t/2$.
The estimate (\ref{eq:h1est1}) follows from this and the support and smoothness
properties of $h$ and integrability of $\log (2u-i\epsilon/t)$ and 
$\log |2u|$.
From (\ref{eq:rewrite1}) we can see that the limit 
as $\epsilon \downarrow 0$ exists,
is given by 
$$g(t)=- \frac{1}{2t}\int \log |2u| h'(u-t/2)du,$$ 
and is continuous for positive $t$.
Moreover, for $0<t<4,$ $|g(t)| \leq C/t$.
\end{proof}

This next lemma proves a similar result.  Notice  a difference
between this lemma and the previous one comes
from the denominator in the integrand having a stationary point at $s=0$.
\begin{lemma}\label{l:gcont2}
Let $\tilde{h}\in C_c^{\infty}( (-1/4, 1/4);\Real)$.  Then there is
a $C>0$ so that for $0<\epsilon<1$ and $t>0$
\begin{equation}\label{eq:h2est1}
\left| \Re \int \frac{1}{t^2-2t\sqrt{1-s^2}-i\epsilon}\tilde{h}(s) ds\right| \leq 
 \frac{C}{t|t^2-4|^{1/2}}\left(1+ \left|\log (\epsilon/|t^2-2t|)\right|\right).
\end{equation}
Moreover, if $t>0$, $t\not =2$,  then setting 
$$g(t)=\lim_{\epsilon\downarrow 0} \Re \int \frac{1}{t^2-2t\sqrt{1-s^2}-i\epsilon}
\tilde{h}(s) ds$$
the function $g $ is continuous function on $(0,2)\cup (2,\infty)$ 
and satisfies
$|g(t)|\leq C/(t|t^2-4|^{1/2})$.
\end{lemma}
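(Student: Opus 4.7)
My approach parallels the proof of Lemma~\ref{l:gcont1}, reducing the integrand to a principal-branch logarithm by an integration by parts; the critical point of the denominator at $s=0$ is straightened out by a suitable change of variables.

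First I would substitute $s = \sin\phi$ and then $u = 2\sin(\phi/2)$, so that $\cos\phi = 1 - u^2/2$ and $d\phi = du/\sqrt{1-u^2/4}$. A direct computation gives
$$\int \frac{\tilde h(s)\,ds}{t^2 - 2t\sqrt{1-s^2} - i\epsilon} \;=\; \frac{1}{t}\int \frac{F(u)\,du}{u^2 - \beta^2},$$
where $F(u) = \tilde h\bigl(u\sqrt{1-u^2/4}\bigr)(1 - u^2/2)/\sqrt{1 - u^2/4}$ is smooth and compactly supported in a small neighborhood of $0$ (determined by $\supp \tilde h$), and $\beta$ is the principal square root of $(2-t) + i\epsilon/t$, so $\Re\beta \geq 0$ and $\Im\beta > 0$ for $\epsilon > 0$. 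Using the partial-fraction identity $\frac{1}{u^2 - \beta^2} = \frac{1}{2\beta}\bigl[\frac{1}{u-\beta} - \frac{1}{u+\beta}\bigr]$ and integrating by parts (legitimate because $\Im\beta > 0$ keeps $u \pm \beta$ off the branch cut of the principal logarithm, and $F$ vanishes outside a compact set), I obtain
$$\frac{1}{t}\int \frac{F(u)\,du}{u^2 - \beta^2} \;=\; -\frac{1}{2t\beta}\int F'(u)\bigl[\log(u-\beta) - \log(u+\beta)\bigr]\,du.$$

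Next I would estimate the right-hand side in absolute value. Since $|\beta|^2 = |\beta^2| = \sqrt{(2-t)^2 + \epsilon^2/t^2} \geq |t-2|$, the prefactor obeys $1/(t|\beta|) \leq 1/(t|t-2|^{1/2})$; for $t$ in a bounded range this is bounded by $C/(t|t^2-4|^{1/2})$ since $|t+2|$ is bounded. The log integral is bounded using $|\log z| \leq |\log|z|| + \pi$ and the uniform estimate $\int_{-c}^c \bigl|\log|u - \beta|\bigr|\,du \leq C\bigl(1 + |\log|\Im\beta||\bigr)$ for $u$ in the compact support of $F'$. A short computation using the identity $2|\Re\beta||\Im\beta| = \epsilon/t$ together with the separate cases $|\Re\beta| \geq |t-2|^{1/2}$ (when $t<2$) and $|\Im\beta| \geq |t-2|^{1/2}$ (when $t>2$) then shows $|\log|\Im\beta||$ is controlled by $1 + \bigl|\log(\epsilon/|t^2 - 2t|)\bigr|$ up to bounded terms absorbed into $C$, yielding (\ref{eq:h2est1}) for $t$ in a bounded range. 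For large $t$ (say $t \geq 4$), the crude direct estimate $|u^2 - \beta^2| \geq |\beta^2| - u^2 \geq t/2$ on the support of $F$ gives $|I| \leq C/t^2$, which already dominates the right-hand side of (\ref{eq:h2est1}) in that regime.

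For the continuity statement, as $\epsilon \downarrow 0$ we have $\beta \to \beta_0 = \sqrt{2-t}$ (real and positive if $t < 2$) or $i\sqrt{t-2}$ (purely imaginary if $t > 2$); dominated convergence in the integration-by-parts formula shows $g(t) = \lim_{\epsilon \downarrow 0}\Re I$ exists and depends continuously on $t$ on each of $(0,2)$ and $(2,\infty)$. The only delicate point is the integrable logarithmic singularity of $\log(u - \beta_0)$ at $u = \sqrt{2-t}$ when $t < 2$, but this is controlled against the bounded compactly supported $F'$. The bound $|g(t)| \leq C/(t|t^2-4|^{1/2})$ follows from the same absolute-value estimate, now without any log factor since $|\beta_0| = |t-2|^{1/2}$ directly provides the correct prefactor. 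The principal obstacle is the bookkeeping needed to control $|\log|\Im\beta||$ by $|\log(\epsilon/|t^2-2t|)|$ across all regimes of $t$ and $\epsilon$; otherwise the scheme closely follows that of Lemma~\ref{l:gcont1}.
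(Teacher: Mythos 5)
Your change of variables ($s=\sin\phi$, $u=2\sin(\phi/2)$) is an explicit realization of the paper's Morse-lemma step (the paper normalizes to $\sqrt{1-s^2}=1-u^2$, you to $1-u^2/2$; this is cosmetic), and the partial-fraction/integration-by-parts reduction to a principal-branch logarithm is the same. The one structural departure is that you do not rescale $u$: the paper substitutes $u\mapsto u|t-2|^{1/2}$, which keeps the root $\alpha$ bounded away from $0$ at the cost of the support of $h'(u|t-2|^{1/2})$ growing as $t\to 2$, whereas in your version $\beta\to 0$ as $t\to 2$ but $F'$ retains a fixed compact support. That alternative is legitimate, but the step where you claim $|\log|\Im\beta||$ is controlled by $1+|\log(\epsilon/|t^2-2t|)|$ is false. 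Take $t\downarrow 2$ with $\epsilon=t-2$: then $\epsilon/|t^2-2t|=1/t\to 1/2$, so the right-hand side stays bounded, while
$|\Im\beta|^2=\tfrac12\bigl((t-2)+\sqrt{(t-2)^2+\epsilon^2/t^2}\,\bigr)\asymp|t-2|\to 0,$
so $|\log|\Im\beta||\to\infty$; no constant $C$ works.

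The good news is that this step is unnecessary, and the gap is easy to close within your framework. Because $F'$ is supported in a fixed compact interval independent of $t$ and $\epsilon$, the integral $\int|\log(u-\beta)-\log(u+\beta)|\,du$ is \emph{uniformly} bounded for $\beta$ in a bounded set: the imaginary parts of the two principal logarithms are each at most $\pi$ in modulus, and $\int_{-c}^{c}|\log|u\pm\beta||\,du\leq C(c,M)$ whenever $|\beta|\leq M$, using $|u\pm\beta|\geq|u\pm\Re\beta|$ and $|u\pm\beta|\leq c+M$ together with the integrability of $\log$. Hence your method actually yields $|\Re I|\leq C/(t|\beta|)\leq C/(t|t-2|^{1/2})$ for $t$ in a bounded range, with no logarithmic loss at all, which is strictly stronger than (\ref{eq:h2est1}) and so implies it. (The paper's $1+|\log(\epsilon/|t^2-2t|)|$ factor is an artifact of its rescaling, where the growing support of $h'(u|t-2|^{1/2})$ makes the log integral genuinely unbounded.) With that correction, the large-$t$ estimate, the continuity claim, and the final bound on $g$ go through exactly as you wrote them.
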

\begin{proof}
The lemma is immediate if $t\geq 2+\delta>2$, so we shall assume $0<t<4$, 
$t\not =2$.

By the Morse Lemma, there is a smooth function $u=u(s)$ 
defined for $|s|\leq 1/4$  so that 
$\sqrt{1-s^2}=1-u^2$, and $s$ can be written as a smooth function of $u$.
Then
\begin{equation}\label{eq:ml}
\int \frac{1}{t^2-2t\sqrt{1-s^2}-i\epsilon}
\tilde{h}(s) ds= \int \frac{1}{t^2-2t(1-u^2)-i\epsilon}
h(u)du
\end{equation}
where $h(u)=\tilde{h}(s(u))\frac{ds}{du}$.
The integral in  (\ref{eq:ml}) can be rewritten
\begin{align*}
\int \frac{1}{t^2-2t(1-u^2)-i\epsilon}h(u)du & 
= \frac{1}{2t|t-2|}\int \frac{1}{\frac{1}{2}\sgn (t-2) +\frac{u^2}{|t-2|}-i\frac{\epsilon}
{2t|t-2|}}h(u)du\\
& = \frac{1}{2t|t-2|^{1/2}}\int \frac{1}{\frac{1}{2}\sgn (t-2) +u^2-i\frac{\epsilon}
{2t|t-2|}}h(u|t-2|^{1/2})du.
\end{align*}
%where for the last equality we have changed variable of integration.
Let  $\alpha =\alpha(\epsilon, t)$ be such that
 $\alpha^2= -\frac{1}{2}\sgn (t-2)+i\frac{\epsilon}{2t|t-2|}$.  
Then, with $\log z$ denoting the principal branch of the logarithm
defined on $\Complex \setminus(-\infty,0]$,
\begin{align*}
&  \Re \frac{1}{2t|t-2|^{1/2}}\int \frac{1}{\frac{1}{2}\sgn (t-2) +u^2-i\frac{\epsilon}
{2t|t-2|}}h(u|t-2|^{1/2})du\\ & 
= \Re \frac{1}{4t\alpha |t-2|^{1/2}}
\int \left(\frac{1}{u-\alpha}-\frac{1}{u+\alpha}\right)h(u|t-2|^{1/2})du\\
& = \Re \frac{-1}{4t\alpha }
\int (\log(u-\alpha)-\log(u+\alpha))h'(u|t-2|^{1/2})du.
\end{align*}
Now we use that $|\log(u\pm \alpha)| \leq (|\log|u\pm \alpha||+\pi/2)$ and
the support properties of $h$ to obtain (\ref{eq:h2est1}).
This also shows the limit as $\epsilon \downarrow 0$ of the 
real part exists for $0<|t-2|<2$, and can be found by interchanging the order
of the limit and integral.  

If $0<t<2$, then when $\epsilon=0$, $\alpha= 1/\sqrt{2}$, and
$$g(t)= \frac{-\sqrt{2}}{4t}\int \log \left| \frac{\sqrt{2}u-1}{\sqrt{2}u+1}\right|
h'(u|t-2|^{1/2})du.$$
This integral yields a continuous function of $t$ for $t\in (0,2)$ which
is bounded as claimed.
If $2<t<4$, then when $\epsilon=0$, $\alpha = i/\sqrt{2}$,
and $g$ is given by
$$g(t)=  \frac{\sqrt{2}}{4t }\Im
\int (\log(u-i/\sqrt{2})-\log(u+i/\sqrt{2}))h'(u|t-2|^{1/2})du.
$$
Since $|\Im (\log(u-i/\sqrt{2})-\log(u+i/\sqrt{2}))| \leq \pi$,
this integral gives a continuous function of $t\in (2,4)$ which
is bounded as claimed.
\end{proof}

The previous two lemmas help to prove the next result.
\begin{lemma}\label{l:gprop} 
There is a $C>0$ so that for $|\eta|\not =0,\; 2$, $0<\epsilon<1$,
\begin{equation}\label{eq:gprop1}\left| \Re \int_{\omega\in \Sphere^{d-1}}
\frac{ 1}{|\eta+\omega|^2-1-i\epsilon} d\sigma_{\omega}\right|
\leq \frac{C(1+|\log (\epsilon/(|\eta|^2-2|\eta|)|)}{|\eta|||\eta|^2-4|^{1/2}}.
\end{equation}
Moreover,
$$\lim_{\epsilon \downarrow 0}\Re \int_{\omega\in \Sphere^{d-1}}
\frac{ 1}{|\eta+\omega|^2-1-i\epsilon}
d\sigma_\omega = g(|\eta|)$$
where $g(t)$ is continuous for $t\in (0,2)\cup(2,\infty)$.  Moreover,
$t\sqrt{|t-2|}g(t)$ is bounded for $t\leq 4$, and there is a constant
$C$ so that $|g(t)|\leq C/t^2$ when $t\geq 4$.
\end{lemma}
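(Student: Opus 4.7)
The plan is to convert the spherical integral to a one-variable integral via polar coordinates adapted to $\eta/|\eta|$, split the integration range by a smooth partition of unity, and apply Lemmas \ref{l:gcont1} and \ref{l:gcont2} to the individual pieces.

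With $t := |\eta|$ and $s := \omega\cdot\eta/|\eta|$, the identity $|\eta+\omega|^2 - 1 = t^2 + 2ts$ gives
$$\int_{\Sphere^{d-1}}\frac{d\sigma_\omega}{|\eta+\omega|^2 - 1 - i\epsilon} = \vol(\Sphere^{d-2})\int_{-1}^{1}\frac{(1-s^2)^{(d-3)/2}}{t^2+2ts-i\epsilon}\,ds.$$
Fix $\delta_0 \in (0, 1/8)$ and a smooth partition of unity $1 = \chi_-(s) + \chi_0(s) + \chi_+(s)$ on $[-1,1]$ with $\chi_\pm$ supported in $2\delta_0$-neighborhoods of $\pm 1$ respectively and $\chi_0$ supported in $[-1+\delta_0,\,1-\delta_0]$.

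The $\chi_+$-piece is trivial: on $\supp \chi_+$ one has $|t^2+2ts-i\epsilon| \geq t(t+2-4\delta_0)$, so its contribution to the real part is bounded by $C/(t(t+1))$. The $\chi_0$-piece has weight $(1-s^2)^{(d-3)/2}\chi_0(s) \in C_c^\infty((-1,1);\Real)$, and Lemma \ref{l:gcont1} yields a real-part bound of the form $C(1+|\log|\epsilon/(t^2-2t)||)/(t(t+1))$.

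The delicate $\chi_-$-piece near $s=-1$ is handled by substituting $s = -\sqrt{1-r^2}$, $r \in [0, r_0]$ with $r_0 < 1/4$, which transforms it into
$$\int_0^{r_0}\frac{r^{d-2}\,\chi_-(-\sqrt{1-r^2})/\sqrt{1-r^2}}{t^2 - 2t\sqrt{1-r^2} - i\epsilon}\,dr.$$
When $d$ is even, the numerator is an even polynomial in $r$ times a smooth even function of $r$, so the integral equals half its symmetric extension to $(-r_0, r_0)$, and Lemma \ref{l:gcont2} gives the bound $C(1+|\log(\epsilon/|t^2-2t|)|)/(t|t^2-4|^{1/2})$. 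When $d$ is odd, $r^{d-2}$ is odd in $r$, but after the Morse substitution $\sqrt{1-r^2} = 1 - u^2$ the denominator factors as $2t(u-\alpha)(u+\alpha)$ with $\alpha^2 = (2-t)/2 + i\epsilon/(2t)$, and the odd numerator produces integrands of the form $uF(u)/((u-\alpha)(u+\alpha)) = \tfrac{1}{2}F(u)\bigl((u-\alpha)^{-1}+(u+\alpha)^{-1}\bigr)$ with $F$ smooth and even; integration by parts in $u$ produces boundary and bulk $\log$ contributions of order $(|\log|t-2|| + |\log\epsilon|)/t$, which is majorized by the required bound. Summing the three estimates yields (\ref{eq:gprop1}).

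For the existence and continuity of $g$, each of the three pieces has a pointwise limit as $\epsilon\downarrow 0$ (by Lemmas \ref{l:gcont1} and \ref{l:gcont2}, together with dominated convergence for the $\chi_+$-piece), giving $g(t)$ continuous on $(0,2)\cup(2,\infty)$ and satisfying the claimed bound $t\sqrt{|t-2|}\,g(t) = O(1)$ on $(0,4]$. For $t\geq 4$, the uniform lower bound $|t^2+2ts| \geq t^2/2$ on $s\in[-1,1]$ bounds the integrand by $O(1/t^2)$, and integration over $\Sphere^{d-1}$ yields $|g(t)|\leq C/t^2$. The main technical hurdle is the odd-dimensional $\chi_-$-analysis: Lemma \ref{l:gcont2} is stated only for two-sided integrals with smooth symmetric weights, so one must redo the one-sided integration-by-parts argument to accommodate the odd parity of $r^{d-2}$, carefully tracking that the boundary contribution at $u=0$ is of the milder order $|\log|t-2||/t$ rather than $1/(t\sqrt{|t-2|})$ and therefore still fits inside the asserted estimate.
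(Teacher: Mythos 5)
Your proof follows essentially the same route as the paper's: reduce the spherical integral to a one-variable $s$-integral by the coarea formula, decompose by a partition of unity in $s=\omega\cdot\eta/|\eta|$ adapted to $s=\pm 1$, dispose of the $s\approx +1$ piece by the uniform lower bound on the denominator, apply Lemma \ref{l:gcont1} to the middle piece, and apply Lemma \ref{l:gcont2} after the substitution $s=-\sqrt{1-r^2}$ on the $s\approx -1$ piece, with the $t\geq 4$ decay of $g$ coming from the easy pointwise lower bound.

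The point you raise about the odd-dimensional treatment of the $\chi_-$-piece is a genuinely necessary supplement, not just a stylistic variant. The paper passes to a symmetric two-sided integral $\tilde c_d\,\Re\int_{-2\delta}^{2\delta}\frac{\chi_-(-\sqrt{1-s^2})}{t^2-2t\sqrt{1-s^2}-i\epsilon}\frac{s^{d-2}}{\sqrt{1-s^2}}\,ds$ and quotes Lemma \ref{l:gcont2}, which requires a $C_c^\infty$ weight on a symmetric interval. For $d$ even, $s^{d-2}$ is even so the symmetrization of the one-sided $s=|\omega'|$ integral is exact and the weight is smooth, and Lemma \ref{l:gcont2} applies directly. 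For $d$ odd (which Theorem \ref{thm:derivasmpt}, and hence this lemma, explicitly allows), $s^{d-2}$ is odd, so the two-sided integral as written would vanish; the correct even extension is $|s|^{d-2}$, which is not $C^\infty$ at $s=0$ (and for $d=3$ not even $C^1$, which the integration by parts in the proof of Lemma \ref{l:gcont2} requires), so that lemma cannot simply be quoted. Your one-sided argument — factoring the denominator as $2t(u-\alpha)(u+\alpha)$ after the Morse substitution, using the overall factor of $u$ coming from $r^{d-2}$ (for $d$ odd) to combine the partial fractions as $\tfrac12 F(u)\bigl((u-\alpha)^{-1}+(u+\alpha)^{-1}\bigr)$ with $F$ smooth and even and no $1/\alpha$ amplification, then integrating by parts once and observing that the boundary term at $u=0$ contributes only $O(|\log|\alpha||/t)$ — correctly closes this case and produces a contribution that fits inside the asserted estimate. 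You should also note that $F(0)$ may be nonzero when $d=3$, so this boundary term really does occur; and that after sending $\epsilon\downarrow 0$ it gives a logarithmic, hence integrable, singularity in $g$ at $t=2$, consistent with the claim that $t\sqrt{|t-2|}\,g(t)$ is bounded.
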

%Before proving the lemma, we note that by Lemma \ref{l:nodeltas},  if 
%$\varphi\in C_0^\infty(\Real^d)$,
%then 
%$$\int_{\omega \in \Sphere^{d-1}}\int \varphi(\eta) 
%\sum_{\pm} \frac{1}{|\eta+\omega|^2-1\pm i 0}d\eta d\sigma_\omega
%= \int \varphi(\eta)g(|\eta|)d\eta.$$
\begin{proof}
When $\epsilon>0$, 
by a change of variable of integration (a
rotation) one can see  that
$$\int _{\omega\in \Sphere^{d-1}}
\frac{ 1}{|\eta+\omega|^2-1- i\epsilon}d \sigma_{\omega} 
= \int _{\omega\in \Sphere^{d-1}}
\frac{ 1}{|\eta|^2 + 2 \omega \cdot \eta - i\epsilon}d \sigma_{\omega} $$
 depends on $\eta$ only through $|\eta|$.  Hence, without loss of generality,
to evaluate the integral away from $\eta =0$ we may assume $\eta/|\eta|=
(1,0,...,0)$.
Thus, writing $t$ in place of $|\eta|$,  we wish to bound 
$$\Re \int _{\omega\in \Sphere^{d-1}}
\frac{ 1}{t^2 + 2 \omega_1 t  - i\epsilon}d \sigma_{\omega},$$ 
where $\omega = (\omega_1,...,\omega_{d})$.
 It immediate that the integral is smooth when
$t>2$ and decays at infinity (in $t$) as claimed, even when $\epsilon =0$.
Hence below we may assume $0<t\leq 4$.

Choose a  partition of unity on $\Sphere^{d-1}$, depending only on $\omega_1$, so 
that $1=\chi_+(\omega_1)+
\chi_-(\omega_1) +\chi_m(\omega_1)$, $\chi_{\pm }$ are supported near
$\omega_1=\pm 1$, $\chi_m$ is $0$ in a neighborhood of $\omega_1=\pm 1$, and
all three functions are smooth and real-valued.
 We shall want $\chi_-$ supported 
close to $\omega_1=-1$, say within $2 \delta$, with $\delta>0$ small,
and assume $\chi_m(\omega_1)=0$ if $| \omega_1 \pm 1 |<\delta$.
%We shall assume $\chi_-(\omega_1)$ is $o$ when $\omega_1>-1+\delta$.
Since $t> 0$, 
 $$\left| \frac{ 1}{t^2+2\omega_1 t-i\epsilon} \chi_+(\omega_1)\right| \leq
\frac{C}{t}.$$ 
Hence the integral over the support of $\chi_+$ gives a contribution
to (\ref{eq:gprop1}) and
to $g$ which behaves as claimed.

On the support of $\chi_m$ we can use $\omega_1$ as a 
coordinate and can write,  with $t>0$
 and $\delta>0$  sufficiently small,
\begin{align}\label{eq:chim1}
 \Re \int _{\omega\in \Sphere^{d-1}}
\frac{ \chi_m(\omega_1) }{t^2 + 2 \omega_1 t  - i\epsilon}
  d \sigma_{\omega}
= c_d'\Re  \int_{-1+\delta}^{1-\delta}
\frac{ \chi_m(\omega_1)}{t^2 + 2t \omega_1   - i\epsilon}(1-\omega_1^2)^{(d-3)/2}
d\omega_1
\end{align}
where $c_d'$ is a positive constant.  Here we have 
integrated over a space of dimension $d-2$, the 
level surfaces of $\omega_1$ in $\Sphere^{d-1}$.  Now applying Lemma \ref{l:gcont1}
we see the integral over the support of $\chi_m$ gives a contribution to 
 (\ref{eq:gprop1}) and
to $g$ which behaves as claimed.

%Using \cite[(3.2.5), (3.2.10')]{ho1}, 
%\begin{multline}\label{eq:chim2}
% \lim_{\epsilon \downarrow 0}\sum_{\pm} t\int _{\omega\in \Sphere^{d-1}}
%\frac{ 1}{t^2 + 2 \omega_1 t  \pm i\epsilon}
%\chi_m(\omega_1)  d \sigma_{\omega} \\
% = -2 \int_{-1+\delta}^{1-\delta}\log|t+2\omega_1|
%\frac{\partial}{\partial \omega_1} ( h_m(\omega_1) \chi_m(\omega_1)) d\omega_1.
%\end{multline}  This is a continuous
% and bounded function of $t$ (notice we multiplied
%by $t$ in (\ref{eq:chim1}) and (\ref{eq:chim2})).

Now consider the contribution from $\omega_1$ near $-1$.  Writing 
$\omega = (\omega_1,\omega')$ and taking $s^2=|\omega'|^2$,
%  Here we can find
%$s$ with $|\omega_1|^2=1-s^2$ and find a smooth function
%$h_{-1}(s)$  so that, after integrating  over level curves of 
%$\omega_1$ on $\Sphere^{d-1}$,
\begin{equation*}
\Re \int _{\omega\in \Sphere^{d-1}}
\frac{ \chi_{-}(\omega_1) }{t^2 + 2 \omega_1 t  - i\epsilon}
\chi_{-}(\omega_1)  d \sigma_{\omega} 
= \tilde{c}_d\Re \int ^{2\delta}_{-2\delta} 
\frac{ \chi_{-}(-\sqrt{1-s^2})}{t^2 - 2t \sqrt{1-s^2}   - i\epsilon}
\frac{s^{d-2}}{\sqrt{1-s^2}} ds
\end{equation*}
for some constant $\tilde{c}_d$.
Here again we have integrated over the level surfaces of $\omega_1$,
a space of dimension $d-2$.  Applying Lemma \ref{l:gcont2} completes the 
proof. \end{proof}

\begin{lemma}\label{l:B2simp} Let $V\in L^\infty_c(\Real^d;\Real)$, and $B_2$ be as defined
in (\ref{eq:Bs}).  Then for $\lambda \in (0,\infty)$,
$$\frac{d}{d\lambda}
\Im \tr B_2(\lambda) = \frac{1}{2} (2\pi)^{-2d+1}\lambda^{2(d-2)} 
\frac{d}{d\lambda} \int 
|\hat{V}(\lambda \eta)|^2 g(|\eta|) d\eta + O(\lambda^{d-4})\; \text{
as $\lambda \rightarrow \infty.$}$$
Here $g(|\eta|)$ is the function defined in Lemma \ref{l:gprop}. 
\end{lemma}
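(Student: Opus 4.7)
The plan is to start from the expression (\ref{eq:B2expr}), take imaginary parts, and prove the exact identity
\begin{equation}\label{eq:target-identity}
\Im \tr B_2(\lambda) = \tfrac12(2\pi)^{-2d+1}\lambda^{2(d-2)}\int_{\Real^d}|\hat V(\lambda\eta)|^2 g(|\eta|)\,d\eta,
\end{equation}
after which I differentiate in $\lambda$ and show that the extra term arising from differentiating the prefactor $\lambda^{2(d-2)}$ is absorbed into $O(\lambda^{d-4})$.

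To prove (\ref{eq:target-identity}), I take the imaginary part of (\ref{eq:B2expr}) using $\Im(iz)=\Re z$, which reduces matters to verifying
$$\lim_{\epsilon\downarrow 0}\int_{\Sphere^{d-1}}\int_{\Real^d}\Re\frac{|\hat V(\lambda\eta)|^2}{|\eta+\omega|^2-1-i\epsilon}\,d\eta\,d\sigma_\omega=\int_{\Real^d}|\hat V(\lambda\eta)|^2 g(|\eta|)\,d\eta.$$
Insert a radial partition of unity isolating the two singular spheres $\{|\eta|=0\}$ and $\{|\eta|=2\}$: pick $\psi\in C_c^\infty((-1,1))$ with $\psi\equiv 1$ near $0$, and for $\delta>0$ write $1=\psi(|\eta|/\delta)+\psi((|\eta|-2)/\delta)+\Phi_\delta(|\eta|)$. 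On the support of $\Phi_\delta$, the uniform bound (\ref{eq:gprop1}) of Lemma \ref{l:gprop} together with the boundedness of $|\hat V(\lambda\eta)|^2$ supplies an $\epsilon$-independent dominant, so Fubini and dominated convergence let me swap the order of integration, perform the $\omega$-integral first, and pass $\epsilon\downarrow 0$ inside the $\eta$-integral, producing $\int\Phi_\delta(|\eta|)|\hat V(\lambda\eta)|^2 g(|\eta|)d\eta$; sending $\delta\downarrow 0$ and using the bounds $|g(t)|\lesssim 1/t$ near $t=0$ and $|g(t)|\lesssim |t-2|^{-1/2}$ near $t=2$ (times the $|\eta|^{d-1}$ Jacobian for local integrability) yields $\int|\hat V(\lambda\eta)|^2 g(|\eta|)d\eta$.

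For the two cutoff pieces near the singular spheres I apply Lemma \ref{l:nodeltas}. Since $2\Re(a-i\epsilon)^{-1}=(a-i\epsilon)^{-1}+(a+i\epsilon)^{-1}$, the lemma with test function $\phi(\eta)=\chi(\eta)|\hat V(\lambda\eta)|^2$, where $\chi\in C_c^\infty(\Real^d)$ equals $1$ on $\overline{B(0,3)}$, shows that both pieces tend to $0$ in the iterated limit $\lim_{\delta\downarrow 0}\lim_{\epsilon\downarrow 0}$. Combining the three pieces establishes (\ref{eq:target-identity}). Differentiating in $\lambda$ splits $\frac{d}{d\lambda}\Im\tr B_2$ into $\tfrac12(2\pi)^{-2d+1}\lambda^{2(d-2)} J'(\lambda)$ plus $(d-2)(2\pi)^{-2d+1}\lambda^{2d-5} J(\lambda)$, where $J(\lambda)=\int|\hat V(\lambda\eta)|^2 g(|\eta|)d\eta$. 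The latter is $O(\lambda^{d-4})$ once we show $J(\lambda)=O(\lambda^{1-d})$, which follows by the substitution $\xi=\lambda\eta$, the bound $|g(t)|\le C/t$ away from $t=2$, the finiteness of $\int|\hat V(\xi)|^2/|\xi|\,d\xi$, and Riemann--Lebesgue/Hausdorff--Young decay of $\hat V$ to control the singular annulus $|\xi|\sim 2\lambda$.

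The main obstacle is the management of the iterated limit $\lim_{\delta\downarrow 0}\lim_{\epsilon\downarrow 0}$ near the two singular spheres, where the $\omega$-integral of $(|\eta+\omega|^2-1-i\epsilon)^{-1}$ threatens to acquire distributional contributions concentrated on $\{|\eta|=0,2\}$; this is exactly the situation Lemmas \ref{l:gprop} and \ref{l:nodeltas} were designed to resolve. A secondary obstacle is the $O(\lambda^{1-d})$ estimate for $J(\lambda)$ in the annulus $|\xi|\sim 2\lambda$, which requires invoking the quantitative Paley--Wiener/Hausdorff--Young regularity of $\hat V$ afforded by $V\in L^\infty_c$, rather than the mere $L^1\cap L^\infty$ properties.
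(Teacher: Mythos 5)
Your proposal is correct and follows the same core strategy as the paper: take the imaginary part of (\ref{eq:B2expr}), justify interchanging $\lim_{\epsilon\downarrow 0}$ with the $\eta$-integral via Lemmas \ref{l:nodeltas} and \ref{l:gprop}, and then account for the error coming from differentiating the prefactor $\lambda^{2(d-2)}$. The one place you diverge from the paper is in estimating that error term $(d-2)(2\pi)^{1-2d}\lambda^{2d-5}J(\lambda)$: the paper simply observes that $|\Im\tr B_2(\lambda)|\le\|B_2(\lambda)\|_{\tr}\le C\lambda^{d-3}$ by the already-established bound (\ref{eq:B2easyestimate}), so multiplying by $2(d-2)\lambda^{-1}$ gives $O(\lambda^{d-4})$ immediately. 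You instead re-derive the equivalent estimate $J(\lambda)=O(\lambda^{1-d})$ from scratch via the substitution $\xi=\lambda\eta$, the bounds on $g$, and the spherical $L^2$ restriction estimate for $\hat V$. Both routes work; the paper's is shorter because it recycles an estimate already in hand, while yours is self-contained but redundant with Lemma \ref{l:tracewithBs}.

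One technical imprecision worth fixing: you claim that the bound (\ref{eq:gprop1}) "supplies an $\epsilon$-independent dominant" on $\supp\Phi_\delta$. As stated, (\ref{eq:gprop1}) contains a factor $1+|\log(\epsilon/(|\eta|^2-2|\eta|))|$, which diverges like $|\log\epsilon|$ as $\epsilon\downarrow 0$ even for $\eta$ bounded away from the spheres $\{|\eta|=0\}$ and $\{|\eta|=2\}$, so it does not furnish a dominant independent of $\epsilon$. What actually justifies passing the limit through the $\eta$-integral on $\supp\Phi_\delta$ is that the convergence of $\Re\int_{\Sphere^{d-1}}(|\eta+\omega|^2-1-i\epsilon)^{-1}d\sigma_\omega$ to $g(|\eta|)$ is locally uniform on $\{|\eta|\neq 0,2\}$ (this can be read off from the explicit representations in the proofs of Lemmas \ref{l:gcont1} and \ref{l:gcont2}, e.g.\ the formula $-\Re\frac{1}{2t}\int\log(2u-i\epsilon/t)h'(u-t/2)\,du$), combined with the integrable $\epsilon$-independent bounds on $g$ itself. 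You should replace the dominated-convergence step with this locally uniform convergence argument; the rest of the proof, including the use of Lemma \ref{l:nodeltas} near the singular spheres, then goes through as you describe.
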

\begin{proof}
Using (\ref{eq:B2expr})
\begin{align*}
\frac{d}{d\lambda} \Im \tr B_2 (\lambda)  & 
= 2(d-2) \lambda^{-1} \Im \tr B_2(\lambda) \\ & \hspace{2mm}
+\frac{\lambda^{2(d-2)}}{2 (2\pi)^{2d-1}}
 \frac{d}{d\lambda}\lim_{\epsilon\downarrow 0}
\Re \int_{\omega \in \Sphere^{d-1}}\int
|\hat{V}(\lambda \eta)|^2  \frac{1}{|\eta+ \omega|^2-1-i\epsilon} d\eta
d\sigma_\omega
\end{align*}
Using the estimate (\ref{eq:B2easyestimate}), the first term on the right above
is $O(\lambda^{d-4})$.  

Since $V\in L^\infty_0(\Real^d)$, then 
$\hat{V},\; \frac{\partial}{\partial \eta_j}\hat{V}(\eta)
 \in C^\infty(\Real^d) \cap L^2(\Real^d)$, and we can change
the order of the limit and the integral, getting, by Lemmas \ref{l:nodeltas} and
\ref{l:gprop} 
$$ \frac{d}{d\lambda}\lim_{\epsilon \downarrow 0}
\Re\int_{\omega \in \Sphere^{d-1}} \int
|\hat{V}(\lambda \eta)|^2  \frac{1}{|\eta+ \omega|^2-1-i\epsilon} d\eta
d\sigma_\omega
=\frac{d}{d\lambda}\int
|\hat{V}(\lambda \eta)|^2  g(|\eta|) d\eta
$$ where $g(|\eta|)$ is the function defined in Lemma \ref{l:gprop}.
%Notice that the properties of $\hat{V}$ and 
%its derivatives and $g$ ensure
%that $\int |\hat{V}(\lambda \eta)|^2 |g(|\eta|)|d\eta <\infty$
%and $\int \frac{d}{d\lambda}|\hat{V}(\lambda \eta)|^2 |g(|\eta|)|d\eta <\infty$.
%Using the 
%properties of $\hat{V}$, $\frac{d}{d\eta_j}\hat{V}$, and $g$
  %we may interchange the order of differentiation and integration,
%proving the lemma.
\end{proof}

%Using Lemmas \ref{l:tracewithBs}, \ref{l:trB1}, and \ref{l:tr2B1s}
%and the fact that $\tr[ S^*(\lambda) \dot{S}(\lambda)]$ is 
%pure imaginary for $\lambda \in (0,\infty)$,
%the proof of Proposition \ref{prop:derivasmpt} will be completed by 
%the following lemma.
%\begin{lemma}
%For $\lambda \in (0,\infty)$, $\Im[\tr(\dot{B}_2(\lambda) )]=O(\lambda^{d-7/2})$
%as $\lambda \rightarrow \infty.$
%\end{lemma}
We may now prove Lemma \ref{l:B2est}.
\begin{proof}
%To evaluate $\tr \dot{B}_2$, we use (\ref{eq:B2expr}) to 
%write 
%\begin{align*}
%\frac{d}{d\lambda} \Im \tr B_2 (\lambda)  & 
%= 2(d-2) \lambda^{-1} \frac{1}{i} \tr B_2(\lambda)
%+\frac{1}{2} (2\pi)^{-d+1}\lambda^{2(d-2)} \frac{d}{d\lambda}\int
%|\hat{V}(\lambda \eta)|^2 g(|\eta|) d\eta
%\end{align*}
%where $g(|\eta|)$ is the distribution described in Lemma \ref{l:gprop}.  The 
%first summand on the right above is easily seen to be of order $\lambda^{d-4}$,
%using (\ref{eq:B2easyestimate}).   
%We recall that we assume that $V$ is compactly supported,
%so $\hat{V}$ is smooth.  We may move the derivative past the integral.
Recall that $\hat{V}\in L^2(\Real^d)$ is smooth.  
Using Lemma \ref{l:B2simp} we must estimate
\begin{equation}\label{eq:B2deriv}
\frac{d}{d\lambda}\int
|\hat{V}(\lambda \eta)|^2 g(|\eta|) d\eta
= \sum_{j=1}^d
\int  \eta_j\left[ \hat{V}_j(\lambda \eta) \overline{\hat{V}}
(\lambda \eta) + \hat{V}(\lambda \eta) \overline{\hat{V}_j}
(\lambda \eta)\right] g(|\eta|) d\eta
\end{equation}
where $$\hat{V}_j(\eta) =\frac{\partial}{\partial \eta_j}\hat{V}(\eta).$$
We have used the properties of
 $\hat{V}$, $\frac{d}{d\eta_j}\hat{V}$, and $g$ to justify the 
  interchange the order of differentiation and integration.

We take just one term on the right in (\ref{eq:B2deriv}), as the 
others are handled in exactly the same way.
Introducing polar coordinates
\begin{equation}
\int  \eta_j  \hat{V}_j(\lambda \eta) \overline{\hat{V}}
(\lambda \eta) g(|\eta|) d\eta 
= \int_0^\infty \int_{\theta \in \Sphere^{d-1}} \rho \theta_j
\hat{V}_j(\lambda\rho \theta) \overline{\hat{V}}(\lambda\rho \theta)  g(\rho)
\rho^{d-1} d\sigma_\theta d\rho.
\end{equation}
We will write this integral as the sum of three 
integrals, depending on the size of $\rho$:
$0\leq \rho \leq 2-\lambda^{-1}$, $\rho \geq 2+\lambda^{-1}$,
and $|2-\rho|\leq \lambda^{-1}$.
Recalling that $\rho |\rho-2|^{1/2} g(\rho)$ is bounded when 
$\rho \leq 4$ 
we may bound
\begin{multline*} 
\left|\int_0^{2-\lambda^{-1}} \int_{\theta \in \Sphere^{d-1}}\theta_j \rho
\hat{V}_j(\lambda\rho \theta) \overline{\hat{V}}(\lambda\rho \theta) 
 g(|\rho|)
\rho^{d-1} d\sigma_\theta d\rho \right| \\
\leq C \lambda^{1/2}
\int_0^{2-\lambda^{-1}}\int_{\theta \in \Sphere^{d-1}}
 |\hat{V}_j(\lambda\rho \theta) \overline{\hat{V}}(\lambda\rho \theta) |
\rho^{d-1}d\sigma_\theta d\rho  .
\end{multline*}
Integrating over all of $\rho\in (0,\infty)$ and doing a change of variables
gives
\begin{align}\label{eq:B2,1}& 
 \left|\int_0^{2-\lambda^{-1}} \int_{\theta \in \Sphere^{d-1}}
\hat{V}_j(\lambda\rho \theta) \overline{\hat{V}}(\lambda\rho \theta) 
\theta_j \rho g(|\rho|)
\rho^{d-1} d\sigma_\theta d\rho \right| \nonumber \\ & 
\leq C \lambda^{1/2-d} \int_0^\infty  \int_{\theta \in \Sphere^{d-1}}
\left|\hat{V}_j(\rho \theta) \overline{\hat{V}}(\rho \theta) \right|
\rho^{d-1}  d\sigma_\theta d\rho \nonumber 
\\ & \leq C \lambda^{1/2-d} \| V\| \| x_j V\|.
\end{align}

Similarly, when $\rho >2$ we can, by Lemma \ref{l:gprop},
 bound $|g(\rho)|\leq C |\rho-2|^{-1/2}\rho^{-1}$.  Thus
\begin{align}\label{eq:B2,2}
& \left|\int_{2+\lambda^{-1}}^\infty \int_{\theta \in \Sphere^{d-1}}
\hat{V}_j(\lambda\rho \theta) \overline{\hat{V}}(\lambda\rho \theta) 
\theta_j \rho g(|\rho|)
\rho^{d-1} d\sigma_\theta d\rho \right| \nonumber 
\\ &  \leq C \lambda^{1/2} 
\int _{2+\lambda^{-1}}^\infty \int_{\theta \in \Sphere^{d-1}}
\left|\hat{V}_j(\lambda\rho \theta) \overline{\hat{V}}(\lambda\rho \theta)\right |
\rho^{d-1} d\sigma_\theta d\rho  \nonumber \\
& \leq C\lambda^{1/2} \int_0^\infty \int_{\theta \in \Sphere^{d-1}}
\left|\hat{V}_j(\lambda\rho \theta) \overline{\hat{V}}(\lambda\rho \theta) \right|
\rho^{d-1} d\sigma_\theta d\rho \nonumber \\
& \leq \lambda^{1/2-d} \int_{\Real^d} \left|\hat{V}_j(\eta) \overline{\hat{V}}
(\eta) \right| d\eta \nonumber \\
& \leq C \lambda^{1/2-d} \| V\| \| x_j V\|.
\end{align}

Now we consider the region with $2-\lambda^{-1}\leq \rho \leq 
2+\lambda^{-1}$.  Here we use  that for compactly supported $V$
$$ \int_{\theta \in \Sphere^{d-1}}
\left|\hat{V}(\lambda  \theta)\right|^2 dS_{\theta}
\leq C \lambda^{-(d-2)-1}\| V  \|^2$$
where the constant $C$ depends on the support of $V$; this is essentially
our bound on $\lambda^{-(d-2)/2}\Gamma_0(\lambda)\chi $ applied to $V$, see 
(\ref{eq:gammaest}).  
Hence 
\begin{align}\label{eq:B2,3}
& \left|\int_{2-\lambda^{-1}}^ {2+\lambda^{-1}}\int_{\theta \in \Sphere^{d-1}}
\hat{V}_j(\lambda\rho \theta) \overline{\hat{V}}(\lambda\rho \theta) 
\theta_j \rho g(|\rho|)
\rho^{d-1} d\sigma_\theta d\rho \right| \nonumber \\ & 
\leq C \int_{2-\lambda^{-1}}^ {2+\lambda^{-1}}
 \int_{\theta \in \Sphere^{d-1}}
\left| \hat{V}_j(\lambda\rho \theta) \overline{\hat{V}}(\lambda\rho \theta) 
\right| |\rho-2|^{-1/2}
\rho^{d-1} d\sigma_\theta d\rho \nonumber \\ & 
\leq C \|V\|_{L^1} \| x_j V\|_{L^1} \lambda^{-(d-2)-1} \int_{2-\lambda^{-1}}^ {2+\lambda^{-1}}
|\rho-2|^{-1/2}d\rho  \nonumber\\ & 
\leq C \|V\||_{L^1} \| x_j V\||_{L^1} \lambda^{-d+1-1/2} = O(\lambda^{-d+1/2}).
\end{align}
Using (\ref{eq:B2,1}), (\ref{eq:B2,2}),  and (\ref{eq:B2,3}) shows
that (\ref{eq:B2deriv}) is of order $O(\lambda^{-d+1/2})$ as 
$\lambda \rightarrow
\infty$.  Hence, by Lemma \ref{l:B2simp},  $\Im \tr B_2(\lambda) = O(\lambda^{d-7/2})$, finishing
the proof of the lemma.
\end{proof}

\section{The determinant of the scattering matrix near $0$}
\label{s:near0}
In the previous section we proved a result about the behavior of the logarithmic
derivative of the determinant of the scattering matrix when $\lambda\rightarrow
\infty$, $\lambda \in (0,\infty)$.  We next consider the behavior of the 
same quantity, but for $\lambda$ near $0$.
\begin{lemma} \label{l:near0} Let $d\geq 4$ be even.  Let 
$V\in L^\infty_c(\Real^d;\Real)$, with $S$  denoting the scattering
matrix  of $-\Delta +V$.  Let $\mathcal{P}_0$ denote
projection onto the $L^2$ null space of $-\Delta +V$, with ${\mathcal{P}}_0=0$ if 
the $L^2$ null space is empty.  If $d=4$, assume  Hypothesis \ref{hy:H1} holds.
%$0$ is not a resonance.
Then
$$\frac{ \frac{d}{d\lambda} \det S(\lambda)}{\det S(\lambda)} = 
-\frac{\pi i (d-4)}{(2\pi)^{d}}\vol (\Sphere^{d-1}) \lambda^{d-5}\| \mathcal{P}_0V\|^2 + O(\lambda^{d-3}|\log \lambda|)
\; \text{as $\lambda \rightarrow 0$},\; \arg \lambda =0.$$
If $d=4$ but without assuming Hypothesis \ref{hy:H1}, 
\begin{equation}
\label{eq:noH1}
\frac{ \frac{d}{d\lambda} \det S(\lambda)}{\det S(\lambda)} = 
O(\lambda^{-1}|\log \lambda|^{-2}), \; \text{as $\lambda \rightarrow 0$},\; \arg \lambda =0.\end{equation}
\end{lemma}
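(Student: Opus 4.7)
The plan is to compute $\det S(\lambda)$ and its derivative near $\lambda = 0$ directly from the formula (\ref{eq:scattmatrix}) by carefully expanding both $\Gamma_0(\lambda)$ and the resolvent $R_V(\lambda)$ around the threshold.

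First, I would invoke the results of \cite{jensen4,jensen>4,b-g-d} on the resolvent at the threshold. Under Hypothesis \ref{hy:H1} (automatic for $d \geq 6$), one may decompose
$$R_V(\lambda) = -\lambda^{-2}\mathcal{P}_0 + Q(\lambda),$$
with $\chi Q(\lambda)\chi$ bounded on $L^2(\Real^d)$ for each $\chi \in C_c^\infty(\Real^d)$. These references yield a more refined expansion of $Q(\lambda)$ near $0$ involving $\log\lambda$-type terms (since $d$ is even), from which $\chi Q'(\lambda)\chi = O(|\log\lambda|)$; this is the source of the logarithm in the error term.

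Substituting into (\ref{eq:scattmatrix}),
$$S(\lambda) - I = -2\pi i \Gamma_0(\lambda)V\Gamma_0(\lambda)^* - 2\pi i \lambda^{-2}\Gamma_0(\lambda)V\mathcal{P}_0V\Gamma_0(\lambda)^* + 2\pi i\Gamma_0(\lambda)VQ(\lambda)V\Gamma_0(\lambda)^*.$$
The dominant contribution is the middle term. Writing $\mathcal{P}_0 = \sum_j |\psi_j\rangle\langle\psi_j|$ for an orthonormal basis of the $L^2$ null space, and setting $c_j := \int V\psi_j = \widehat{V\psi_j}(0)$, the Taylor expansion of $\widehat{V\psi_j}$ at the origin yields
$$\Gamma_0(\lambda)V\psi_j(\omega) = \frac{\lambda^{(d-2)/2}}{\sqrt{2}(2\pi)^{d/2}}\bigl(c_j + \lambda\,\omega\cdot a_j + O(\lambda^2)\bigr),$$
with $a_j = -i\int xV\psi_j$. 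Since $\sum_j c_j^2 = \|\mathcal{P}_0 V\|^2$ and the linear-in-$\omega$ cross-corrections have zero trace (because $\int_{\Sphere^{d-1}}\omega\,d\sigma_\omega = 0$),
$$\tr\bigl[-2\pi i\lambda^{-2}\Gamma_0(\lambda)V\mathcal{P}_0V\Gamma_0(\lambda)^*\bigr] = -\frac{\pi i \lambda^{d-4}\|\mathcal{P}_0V\|^2\vol(\Sphere^{d-1})}{(2\pi)^d} + O(\lambda^{d-2}).$$
The remaining two terms in $S - I$ are each $O(\lambda^{d-2})$ in trace norm (with a possible $|\log\lambda|$ from $Q$), contributing $O(\lambda^{d-3}|\log\lambda|)$ upon differentiation. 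Assembling these and using $\det S(\lambda) = 1 + \tr(S - I) + O(\|S-I\|_{\tr}^2) = 1 + O(\lambda^{d-4})$, differentiation then produces the first formula, since division by $\det S(\lambda)$ only introduces errors of strictly smaller size for $d \geq 6$ and is trivial when $d = 4$ (where the claimed coefficient is zero).

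For the case $d = 4$ without Hypothesis \ref{hy:H1}, the resolvent $R_V(\lambda)$ acquires a further singular contribution reflecting the non-eigenvalue resonance at $0$. By the expansions of \cite{jensen4}, this piece of $R_V$ behaves like $(\log\lambda)^{-1}$ times a fixed finite-rank operator; propagating this through the calculation gives $S(\lambda) - I = O(|\log\lambda|^{-1})$ and hence $\frac{d}{d\lambda}\det S/\det S = O(\lambda^{-1}|\log\lambda|^{-2})$. The main obstacle throughout is the careful bookkeeping of low-order logarithmic contributions from the refined resolvent expansions, especially ensuring that errors from differentiation and from inverting $\det S(\lambda)$ are sharp enough to give $O(\lambda^{d-3}|\log\lambda|)$ rather than a larger quantity, and ensuring the analogous sharpness in the $d = 4$ no-H1 case.
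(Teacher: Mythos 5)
Your overall plan coincides with the paper's: expand $R_V$ near the threshold via the Jensen-type expansions, expand $\Gamma_0(\lambda)$ in $\lambda$, and compute the logarithmic derivative of $\det S$ from the trace of the leading terms. The leading-order computation giving $-\frac{\pi i\,(d-4)}{(2\pi)^d}\vol(\Sphere^{d-1})\lambda^{d-5}\|\mathcal{P}_0V\|^2$ is correct.

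However, there is a genuine gap in the $d=4$ case. You dispose of it by saying the claim is ``trivial when $d=4$ (where the claimed coefficient is zero),'' but that addresses only the vanishing of the leading term, not the size of the error. If $\mathcal{P}_0V\neq 0$, then $S(\lambda)-I$ has an $O(1)$ singular part coming from $\lambda^{-2}\Gamma_0(\lambda)V\mathcal{P}_0V\Gamma_0^*(\lambda)$ (since $\lambda^{-2}\cdot\lambda^{d-2}=1$ when $d=4$), and both the higher terms in the determinant expansion (the $\tr A^2,\tr A^3,\ldots$ contributions) and the ``cross-term'' $\tr(A^*\dot A)$ are then of size $O(1)$ a priori; showing they are in fact $O(\lambda|\log\lambda|)$ would require a delicate cancellation argument which your proposal does not supply. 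The paper sidesteps this entirely by observing that in dimension $4$ every $L^2$ zero-energy eigenfunction has radial derivative decaying faster than $r^{1-d}$, so that $\mathcal{P}_0V=0$ identically -- a nontrivial fact which the paper proves and explicitly invokes to reduce the order of $S(\lambda)-I$ by two. Your proposal neither notes this vanishing nor provides the alternative cancellation argument, so the error bound $O(\lambda^{d-3}|\log\lambda|)$ is not actually established for $d=4$. A secondary, smaller issue: you claim $\chi Q'(\lambda)\chi=O(|\log\lambda|)$, but differentiating the term $\log\lambda\,B_{0,1}$ in the Jensen expansion gives $\lambda^{-1}B_{0,1}$, so $\chi Q'(\lambda)\chi=O(\lambda^{-1})$; the $|\log\lambda|$ in the error term actually arises from differentiating $\Gamma_0$ against the undifferentiated logarithmic part of $Q$. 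This does not break your estimate but the stated reason is wrong.
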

Before proving the lemma, we make several comments.

We note that  (\ref{eq:noH1}) is proved in 
 \cite{SBeven}.  We  include an
outline of the proof of (\ref{eq:noH1}) here for 
completeness and for the convenience of the reader.

Note that the term $\langle \mathcal{P}_0 V, V \rangle= \| \mathcal{P}_0V\|^2 
$ may 
be nonzero (this nonzero contribution has gone
unnoticed in some places). Suppose $\phi \in L^2(\Real^d)$, 
$(-\Delta +V)\phi=0$.  Then for $R$ sufficiently large, 
$$\langle V,\phi \rangle = 
\int V(x) \overline{\phi}(x)dx
= \int _{|x|<R} \overline{V\phi}(x) dx 
= \int _{|x|<R} \overline{\Delta \phi}(x) dx 
= \int_{|x|=R} \frac{\partial}{\partial|x|} \overline{\phi}.$$
This is independent of $R$ (sufficiently large), so one may consider
evaluating it in the limit as $R\rightarrow \infty$.
Writing $r=|x|$, if $\frac{\partial}{\partial r} \phi = c_0 r^{1-d}+o(r^{1-d})$  as 
$r\rightarrow \infty$, then the integral is not $0$ if $c_0$ is 
not zero.  This may happen, for example, if $V$ is a radial potential
and $\phi$ is an element of the null space associated to the zero mode
on $\Sphere^{d-1}$.
On the other hand, if $\frac{d}{dr} \phi =o(r^{1-d})$ when $r \rightarrow
\infty$, then $\langle V,\phi \rangle =0$. In dimension $d=4$ an
element of the null space of $-\Delta +V$ which has nonzero component
in the $0$th spherical harmonic when expanded at infinity is not in $L^2$.
However, in dimension $d=4$ an element of the $L^2$ null space of
$-\Delta +V$ has radial derivative decaying at least as fast as 
$O(r^{-2-\sqrt{3}})$ at infinity.  Hence in dimension $d=4$, $\mathcal{P}_0V=0$.

\begin{proof} First we suppose that either $d\not = 4$ or, if $d=4$, 
Hypothesis \ref{hy:H1} holds.
By results of \cite{jensen>4,jensen4}, 
near $0$, with $0\leq \arg \lambda \leq \pi$, the resolvent $R_V(\lambda)$ 
satisfies
\begin{equation}\label{eq:RVat0}
R_V(\lambda) =-\lambda^{-2} \mathcal{P}_0+ \log \lambda\; B_{0,1}+ B_{0,0}+ O(|\lambda|^2 |\log \lambda|^2),\end{equation}
for some operators $B_{j,k}$ which are bounded
from $L^2_{c}(\Real^d)$ to $L^2_{\operatorname{loc}}(\Real^d)$.  Moreover,
the
operator $\frac{d}{d\lambda}R_V(\lambda)$ also has an expansion near $\lambda=0$, and the expansion can be found by formally 
differentiating (\ref{eq:RVat0}), 
with error term
 for the derivative
$O(1)$.  We note that by results of \cite{m-s},  for 
any $M>0$, $R_V(\lambda)$ 
has an expansion in powers of $\lambda$ and $\log \lambda$ that is 
valid for $|\arg \lambda|<M$.

We shall use the expression for the scattering matrix (\ref{eq:scattmatrix}).
Writing $\Gamma_0(\lambda, \omega,x)$ for the Schwartz kernel of 
$\Gamma_0(\lambda)$, near $\lambda=0$
\begin{equation}\label{eq:teg}
\Gamma_0(\lambda,\omega,x)V(x)= 2^{-1/2}(2\pi)^{-d/2}\lambda^{(d-2)/2}( 1-i\lambda x\cdot \omega+ O(|\lambda|^2))V(x)
\end{equation}
with $O(|\lambda|^2)$ error uniform in $x$ and $\omega$,
since $V$ has compact support.  Thus, using in addition that
$\Sphere^{d-1}$ is compact, the same
error holds for the corresponding operators using the $L^2\rightarrow L^2$
norm  or
the Hilbert-Schmidt norm. 
Then
\begin{equation}\label{eq:near01}
\left( \Gamma_0(\lambda) V R_V(\lambda)V\Gamma_0^*(\lambda)\right)(\omega,
\theta)=\frac{1}{2(2\pi)^{d}} \lambda^{d-2} \left[ 
-\frac{ \langle \mathcal{P}_0 V,V\rangle}{\lambda^2} + i\frac{ f(\omega)}{\lambda} -i\frac{ f(\theta)}{\lambda}
+ O(|\log \lambda|)\right]
\end{equation}
near $\lambda =0$.  Here
$$f(\theta) =\int V(x) x\cdot \theta (\mathcal{P}_0V)(x)  dx.$$
The error $O(|\log \lambda||\lambda|^{d-2})$ here 
holds for the corresponding operators in the $L^2 \rightarrow L^2$
 or the trace norm.
%and the $O(|\log\lambda|)$ term is uniform in $\omega,\;\theta$.
 %In (\ref{eq:teg}) and (\ref{eq:near01})  the errors $O(\lambda^2)$
%and $O(|log \lambda|)$ are  in the
%supremum norm, but  since $\Sphere^{d-1} $ and $\supp V$ are compact,
%the same holds for the corresponding operators in the Hilbert-Schmidt norm.
Moreover, 
%the $O(|\log\lambda|)$ term is uniform in $\omega,\;\theta,\; x$
%and 
formally differentiating (\ref{eq:near01})
 results in an expansion for the derivative of the 
left hand side near $0$, with the resulting error bounded
by $O(|\lambda|^{d-3})$.

We use 
\begin{align}
\frac{ \frac{d}{d\lambda}\det S(\lambda)}{\det S(\lambda)}=
\tr( S^*(\overline{\lambda})\dot{S}(\lambda)).
\end{align} 
Using the expression (\ref{eq:scattmatrix}) and the fact
that $\mathcal{P}_0 V=0$ if $d=4$,
we see that
$$\tr( S^*(\overline{\lambda})\dot{S}(\lambda))
= \tr \left[ 2\pi i \frac{d}{d\lambda} \left( \Gamma_0(\lambda) V R_V(\lambda)V
\Gamma_0^*(\lambda)\right) \right] +O(|\lambda|^{d-3})
$$
 near $\lambda =0$ with $\arg \lambda$ near $0$.
Using (\ref{eq:near01}) and the fact that here
the trace is given by the integral of the Schwartz kernel
over the diagonal $\omega=\theta$, this means
\begin{align*} & 
\tr( S^*(\overline{\lambda})\dot{S}(\lambda))\\ & 
= \frac{\pi i}{(2\pi)^d} \int_{\Sphere^{d-1}} \left[ 
-(d-4) \lambda^{d-5} \langle \mathcal{P}_0 V, V\rangle 
+i (d-3) \lambda^{d-4}(f(\theta)-f(\theta)) +O(|\lambda|^{d-3} |\log \lambda|)
\right] d\sigma_\theta\\ 
& = \frac{-\pi i}{(2\pi)^d} (d-4) \Vol(\Sphere^{d-1})  \lambda^{d-5}  
\|\mathcal{P}_0 V\|^2
+ O(|\lambda|^{d-3} |\log \lambda|).
\end{align*}

We outline how to modify the proof if $d=4$  without the 
assumption of Hypothesis \ref{hy:H1}.   In this case, by \cite{jensen4}
near $\lambda=0$, $0\leq \arg \lambda<\pi$,
$$R_V(\lambda) =-\lambda^{-2} \mathcal{P}_0 + 
\lambda^{-2} (a-2\log \lambda)^{-1}B_{-2,-1}+\log \lambda B_{0,1}+B_{0,0}+O(1/|\log \lambda|),$$
where $a$ is a constant depending on $V$ and the operators $B_{j,k}$ are bounded
from $L^2_{c}(\Real^d)$ to $L^2_{\operatorname{loc}}(\Real^d)$.  This expansion can also be 
differentiated, with resulting error $O(1/|\lambda \log \lambda|)$.
Using again that ${\mathcal P}_0V=0$ in dimension $d=4$, we get
$$\left( \Gamma_0(\lambda) V R_V(\lambda)V\Gamma_0^*(\lambda)\right)(\omega,
\theta)= \frac{b}{(a-2 \log \lambda)}+O(|\lambda|)$$
for some constant $b$.  Continuing as in the previous case, we prove
(\ref{eq:noH1}).
\end{proof}

\section{Writing $F$ in terms of canonical products}
\label{s:canonicalprod}
We turn now more directly to the proof of Theorem \ref{thm:samepoles}.  As
a next step, we write $F$, and then $F'/F$, using canonical products.
The proof uses many of the same components as the proofs of Propositions
2.1 and 2.2 of \cite{SBeven}.
In particular, the proof of the next proposition is very similar to that of
\cite[Proposition 2.1]{SBeven}, which itself uses 
techniques as in, for example, \cite{zworskieven}.  We use some of the  notation of \cite{SBeven}
to highlight the similarities.  

For $z\in \Complex$ set 
$$E_0(z)= (1-z)\;\text {and} \;
E_p(z)= (1-z) \exp\left(\sum _{k=1}^p \frac{z^k}{k}\right) \; \text{for $p\in \Natural$}.$$ 
Central to the proof 
is the observation
 that while the scattering matrix $S_j(\lambda)$ for $-\Delta +V_j$ is
a meromorphic function of $\lambda\in \Lambda$, $S_j(e^z)$ is a meromorphic
function of $z\in \Complex$.

For the next proposition, we use much of the notation of Theorem 
 \ref{thm:samepoles}.  Note, however, that we do not need assumption 
(\ref{eq:notmanypoles1}), nor do we need any 
additional hypotheses for the $d=4$ case.
\begin{prop}
\label{prop:gest}
 Let $d$ be even, and  $V_j\in L^{\infty}_c(\Real^d;\Real)$ for $j=1,\;2.$  Set
$P_j=-\Delta+V_j$ and let $S_j(\lambda)$ be the associated scattering 
matrix, unitary for $\lambda>0$. Set
$$F(z)=\frac{\det S_1(e^z)}{\det S_2(e^z)}.$$
Let $\{z_l\}$ denote the distinct poles of $F(z)$, and $M(z_l)$ their 
multiplicities. Suppose that there is an $m\in (0,\infty)$ such that 
\begin{equation}\label{eq:sumconv}
\sum \frac{M(z_l)}{|z_l|^m}<\infty.
\end{equation}
Let $m_1\in \Natural\cup \{ 0\}$ be the smallest nonnegative integer
satisfying $m_1+1\geq m$, and set 
$$P(z,m)= \prod\left( E_{m_1 }(z/\overline{z_l})\right)^{M(z_l)},\; 
Q(z,m)= \prod\left(E_{m_1}(z/z_l)\right)^{M(z_l)}.$$
%$$P(z,m)= \prod\left( E_{\lfloor m \rfloor }(z/\overline{z_l})\right)^{M(z_l)},\; 
%Q(z,m)= \prod\left(E_{\lfloor m \rfloor }(z/z_l)\right)^{M(z_l)}.$$
Then 
$$F(z) = e^{g(z)} \frac{P(z,m)}{Q(z,m)}$$
where $g$ is an entire function satisfying $|g(z)|\leq C \exp(C|z|)$,
some $C>0$.
\end{prop}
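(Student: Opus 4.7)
The plan is to exploit the unitarity symmetry of $F$ to identify its pole and zero structure, then form an entire nowhere-vanishing ratio, and finally bound the growth of its logarithm using canonical product estimates together with a priori bounds on $\det S_j$.

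First I would use the relation $S_j^*(e^{\overline{z}}) = S_j(e^z)^{-1}$ noted in the introduction, which yields $\overline{F(\overline{z})} = 1/F(z)$. Consequently, if $z_l$ is a pole of $F$ of multiplicity $M(z_l)$, then $\overline{z_l}$ is a zero of $F$ of the same multiplicity. Since $|\det S_j(e^z)| = 1$ on $z \in \Real$ by unitarity, none of the $z_l$ lie on the real axis, so the pole and zero sets of $F$ are disjoint. By construction $Q(z,m)$ has a zero of order $M(z_l)$ at each $z_l$, cancelling the poles of $F$, and $P(z,m)$ has a zero of order $M(z_l)$ at each $\overline{z_l}$, cancelling the zeros of $F$. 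Hence
\[
h(z) := F(z)\,\frac{Q(z,m)}{P(z,m)}
\]
is entire and nowhere zero on $\Complex$, and therefore admits an entire logarithm $g$ with $h = e^g$. This yields the required factorization $F = e^g P/Q$; what remains is the growth bound on $g$.

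For this bound the hypothesis $\sum M(z_l)/|z_l|^m < \infty$ together with the choice $m_1 + 1 \geq m$ gives the standard Weierstrass estimate
\[
\log|P(z,m)| + \log|Q(z,m)| \leq C(1 + |z|^{m_1+1}).
\]
To bound $|F(z)|$ I would invoke the known growth estimates for the scattering determinant in even dimensions: there is some $N$ depending on $d$ and $\supp V_j$ such that $|\det S_j(\lambda)| \leq \exp(C|\lambda|^N)$ on $\Lambda$ along a suitable sequence of radii $|\lambda| = e^{r_k} \to \infty$ avoiding small neighbourhoods of the poles (a standard minimum-modulus selection, routine given the summability of $\{M(z_l)/|z_l|^m\}$). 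Translating $\lambda = e^z$, this yields $|F(z)| \leq \exp(C\exp(N|z|))$ on the corresponding circles $|z| = r_k$, and since $h$ is entire the maximum principle extends this to $\log|h(z)| \leq C\exp(C|z|)$ throughout $\Complex$. Borel--Carath\'eodory applied to the entire function $g$, using $\operatorname{Re} g(z) = \log|h(z)|$, then produces $|g(z)| \leq C'\exp(C'|z|)$ as required.

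The main obstacle I expect is verifying the growth bound $|\det S_j(e^z)| \leq \exp(C\exp(C|z|))$ under the weak regularity hypothesis $V_j \in L^\infty_c(\Real^d;\Real)$, rather than the smooth category in which such bounds are classical. The natural route is to express $\det S_j(\lambda)$ via a (possibly modified) Fredholm determinant of the compactly supported perturbation and to combine trace-class estimates for $S_j(\lambda) - I$ with bounds on $R_0(\lambda)$ on the logarithmic cover, paralleling the arguments of \cite{SBeven,zworskieven}. The parts of the proof dealing with $0$ as a resonance (where the expansions of Section \ref{s:near0} are needed) do not enter here: the proposition only concerns the structure at the poles $z_l$ of $F$, which lie in $\Complex$ and correspond to points of $\Lambda$ away from the origin.
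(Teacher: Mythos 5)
Your overall strategy is the same as the paper's, but the proposal has a genuine gap in the step where you bound $|\det S_j(e^z)|$ on good circles. You propose to select circles $|z|=r_k$ ``avoiding small neighbourhoods of the poles,'' and you claim this is ``routine given the summability of $\{M(z_l)/|z_l|^m\}$.'' That hypothesis controls only the poles of the \emph{ratio} $F = \det S_1 / \det S_2$, not the poles of the individual functions $\det S_j$ — these are (essentially) the resolvent resonances of $P_j$, about whose density nothing is assumed, and a pole of $\det S_1$ at some $z_0$ need not be a pole of $F$ at all if $\det S_2$ has a pole there of the same or higher order. So the ``standard minimum-modulus selection'' you invoke has no leverage. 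What the paper actually does is apply Cartan's estimate not to $F$ or to $\det S_j$, but to the entire auxiliary function $\det\bigl(I + (V_j R_0(e^z)\chi)^{d/2+1}\bigr)$ that appears as the denominator in the Gohberg--Krein bound \eqref{eq:gk}; Cartan gives a lower bound outside an exceptional set of small circles without requiring any a priori information about the zero set. Moreover — and the paper flags this as the one genuinely new point relative to \cite[Prop.~2.1]{SBeven} — one must choose the \emph{same} radius $\rho_1=\rho_2$ in each annulus $(R/2,R)$ so that the bound holds simultaneously for $j=1$ and $j=2$; this is arranged by taking the Cartan constant $\eta = 1/40$ small enough that the two exceptional sets cannot together cover the annulus. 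Your proposal does not address this at all, and it is the step most likely to fail if done carelessly.

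There is also a smaller structural gap. You go directly from ``$|F(z)| \leq \exp(C\exp(N|z|))$ on circles $|z|=r_k$'' to ``the maximum principle gives $\log|h(z)| \leq C\exp(C|z|)$ throughout $\Complex$'', but $h = FQ/P$ involves a $1/P$ that you have only bounded from \emph{above}, not below. The correct order of operations is: bound the entire function $F\cdot Q$ on the good circles and extend by the maximum principle; then apply Cartan separately to the canonical product $P$ to get a lower bound $|P(z,m)|\ge \exp(-C|z|^{m+\delta})$ on another sequence of circles; divide; and only then invoke the maximum principle once more on the entire function $e^g$, followed by Borel--Carath\'eodory. As written, your argument conflates these steps and does not actually produce a bound on $1/P$ where you need it.

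Finally, your remark that ``the main obstacle'' is establishing the a priori growth bound for $\det S_j$ under only $L^\infty_c$ regularity is not quite right: the Fredholm-determinant bound \eqref{eq:ub1} from \cite{intissar} already applies to $L^\infty_c$ potentials, and no additional regularity is needed at this stage. (The low-regularity difficulty addressed by Theorem~\ref{thm:derivasmpt} enters in the \emph{asymptotics} used later in the proof of Theorem~\ref{thm:samepoles}, not in Proposition~\ref{prop:gest}.)
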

\begin{proof} Since $\det S_j(e^z) \det S_j^*(e^{\overline{z}})=1$, the set 
 $\{ \overline{z_l}\}$ is the set of zeros of $F(z)$.
Hence $F(z)Q(z,m)/P(z,m)$ is an entire nowhere zero function,
so the only thing to prove is the 
bound on $|g(z)|$.

An intermediate result of the proof of \cite[Proposition 2.1]{SBeven} is that for every $R>1$ there is
a $\rho_j=\rho_j(R)\in (R/2,R)$ so that 
\begin{equation} \label{eq:sjbd}
|\det S_j(e^z)|\leq C\exp(\exp(C |z|)),\; \text{when $|z|=\rho_j$}
\end{equation}  
with constant $C$ independent of $R$.
For our application we will need to know that we can choose 
$\rho_1=\rho_2 \in (R/2,R)$ so that (\ref{eq:sjbd}) holds, and to 
understand this we explain the origin of the $\rho_j$.   The
need to choose $\rho_1=\rho_2$  is the 
main point of divergence from the proof of \cite[Propostion 2.1]{SBeven},
and we outline enough of the proof to show how to make the modifications 
necessary.

%For the convenience
%of the reader we now follow in outline the proof from \cite{SBeven}
%(which itself uses techniques of \cite{zworskieven} and references therein), 
%adapting it to
%our needs.
Let $\chi \in C_c^\infty(\Real^d)$ satisfy $\chi V_j =V_j$ for $j=1,2$.
Then we can write
$$S_j(\lambda)= I +A_j(\lambda)$$
where for a nonzero constant $c_d'$
$$A_j(\lambda) = c_d'E^\chi(e^{i\pi }\lambda) (I+V_j R_0(\lambda)\chi)^{-1} V_j
(E^\chi(\lambda))^t$$
and
$$ E^\chi(e^{i\pi }\lambda) =\Gamma_0(\lambda) \chi.
%=  (-2\pi i )^{1/2}\Gamma_0(-\lambda)^t \chi \; \text{when $\arg \lambda=0$}.$$
$$
For a bounded linear operator $B$, let $\mu_1(B)\geq \mu_2(B)\geq ...$
denote the characteristic values of $B$.
  Then
\begin{equation}\label{eq:charvalueform}
|\det S_j(\lambda)| \leq \prod_{l=1}^\infty (1+\mu_l(A_j(\lambda)))$$
and $$\mu_l(A_j(\lambda)) \leq |c_d'|
\mu_l (E^\chi(e^{i\pi \lambda}\lambda) )  \| (I+V_j R_0(\lambda)\chi)^{-1} \| \|V_j\|_{L^\infty}
\|((E^\chi(\lambda))^t\|.
\end{equation}
Only two terms involve $V_j$, and for $j=1,2$,
$\|V_j\|_{L^\infty}\leq C$ for some $C$.  Thus to prove (\ref{eq:sjbd})
we need to find regions where we can bound 
$\| (I+V_j R_0(\lambda)\chi)^{-1} \| $ independently of $j$.

As in \cite{zworskieven} (and just as in \cite{SBeven}), we use 
\cite[Theorem V.5.1]{g-k},
\begin{equation}\label{eq:gk}
\| (I+V_j R_0(e^z )\chi)^{-1} \|
\leq (1+ \| V_j R_0(e^z)\chi\|^{d/2})\frac{
\det (I + |V_j R_0(e^z)\chi|^{d/2+1})}{ \left| \det (I + (V_j R_0(e^z)\chi)^{d/2+1})\right| }.\end{equation}
We have, by choosing the constant $C$ to be 
the larger of the corresponding constants for $j=1,2$,
\begin{equation}\label{eq:ub1}
\left| \det (I + (V_j R_0(e^z)\chi)^{d/2+1})\right|\leq 
 \det (I + |V_j R_0(e^z)\chi|^{d/2+1}) \leq 
\exp(C\exp[(d+1)|z|])
\end{equation}
by \cite[Proposition 2.1]{intissar}.  

Let $f$ be an analytic function in the disc $\{z\in \Complex:\; |z|\leq 2eR\}$, 
with $f(0)=1$.  Then by Cartan's estimate (e.g. \cite[Theorem I.11]{levin}),
 if $0<\eta<3e/2$, then 
outside a family of discs the sum of whose radii does not exceed $4\eta R$,
$$\log |f(z)|>-(2+\log \frac{3e}{2\eta})\log{\mathfrak M}_f(2eR), \;
\text{where}\; {\mathfrak M}_f(s)=\max\{|f(z)|:\; |z|\leq s\}.$$

Now we apply Cartan's estimate to 
$f_j(z)=\det(I + (V_j R_0(e^z)\chi)^{d/2+1})$ , choosing $\eta =1/40$.
 This ensures that
we can find $\rho_1=\rho_2 \in (R/2,R)$  so that for a constant
$C$ independent of $R$, we have
\begin{equation}\label{eq:bothjs}
\left| \det (I + (V_j R_0(e^z)\chi)^{d/2+1})\right| \geq 
 \exp( -C\exp( C |z|)),\; \text{for}\;
j=1,2,\; \text{if}\;
|z|=\rho_1=\rho_2.\end{equation}
Otherwise, the sum of the sums of the radii of the exceptional
circles for
$f_1$ and for $f_2$ would exceed $(R-R/2)/2=R/4$.  But
if $\eta =1/40$, then  $2(4 \eta R)=
R/5$.

Now the techniques of \cite{zworskieven}, (\ref{eq:charvalueform}),
(\ref{eq:gk}), (\ref{eq:ub1}),  and (\ref{eq:bothjs}) show that 
(\ref{eq:sjbd}) holds for $\rho_1=\rho_2\in (R/2,R)$.
It follows from 
the identity $S_j(\lambda)S^*_j(\overline{\lambda})=I$ that 
$$\det(S_j(e^z))\overline{\det(S_j(e^{\overline{z}}))}=1.$$
Hence (\ref{eq:sjbd}) gives the same bound on the reciprocal of
$\det(S_j(e^z))$ on the circle $|z|=\rho_1(R)$, and 
\begin{equation}\label{eq:rhobd}
\left| \frac{\det S_1(e^z)}{\det S_2(e^z)}\right| \leq C \exp( \exp C|z|)),\; 
\text{if}\;|z|=\rho_1=\rho_2.
\end{equation}

%Part of this proof is a bound on determinants using characteristic values.  
%The bound on the characteristic values 
%However, the 
%portion which is slightly different from us arises in
 %an application of Cartan's estimate (e.g. \cite[Theorem I.11]{levin})
%to obtain a lower bound on the norm of an analytic function $h_j(z)$ on the ball $B(0;R)$
%  outside a family of disks, the sum of 
%whose radii does not exceed $4 \eta R$.  Finding a circle $|z|=\rho_j$, $\rho_j \in (R/2,R)$, which 
%does not intersect any of those disks gives (\ref{eq:sjbd}).  By choosing $\eta= 1/40$ rather than the 
%$\eta = 1/20$ of \cite{SBeven}, we can (at the cost of a worse constant $C$) ensure that 
%we can find $\rho_1=\rho_2 \in (R/2,R)$  so that both $|h_1(z)|$ and $|h_2(z)|$ are bounded
%below when $|z|=\rho_1=\rho_2$.  Otherwise, the sum of the sums of the radii of the circles for
%$h_1$ and the radii of the circles for $h_2$ would exceed $(R-R/2)/2=R/4$.  But $2(4 \eta R)=
%R/5$ when $\eta = 1/40$.
%Thus we have (\ref{eq:sjbd}) with $\rho_1=\rho_2$.  It follows from 
%the identity $S(\lambda)S^*(\overline{\lambda})=I$ that 
%$$\det(S_j(e^z))\overline{\det(S_j(e^{\overline{z}}))}=1.$$
%Hence (\ref{eq:sjbd}) gives the same bound on the reciprocal of
%$\det(S_j(e^z))$, and 
%\begin{equation}\label{eq:rhobd}
%\left| \frac{\det S_1(e^z)}{\det S_2(e^z)}\right| \leq C \exp( \exp C|z|)),\; |z|=\rho_1=\rho_2.
%\end{equation}

The bounds on canonical products (e.g. \cite[Section I.4]{levin}) mean that 
for $\delta>0$
\begin{equation}\label{eq:cpb}
|Q(z,m)|\leq C \exp(C|z|^{m+\delta}),\; |P(z,m)|\leq C \exp(C|z|^{m+\delta})
\end{equation} with constants depending on $\delta$.
Applying this together with (\ref{eq:rhobd}) and using that we have chosen $\rho_1=\rho_2\in (R/2,R)$
gives
$$\left| F(z) Q(z,m)\right| \leq C \exp(\exp(C |z|))),\;\text{if}\; |z|=\rho_1.$$
But since $F(z) Q(z,m)$ is entire and we can find such a $\rho_1=\rho_2\in (R/2,R)$ for each $R>1$,
by the maximum principle we get $|F(z)Q(z,m)|\leq C\exp( \exp(C|z|))$.  Now Cartan's estimate
applied to the function $P(z,m)$ shows that
if $\delta>0$ then for each $R>1$ there is a $\rho'
=\rho'(R)\in (R/2,R)$ 
so that $|P(z,m)|\geq C\exp(- C' |z|^{m+\delta})$ when $|z|=\rho'$, 
with constants $C, \; C'$ independent of $R$. 
Thus 
$$|\exp(g(z))| \leq C \exp( C \exp |z|)),\; \text{if}\;|z|=\rho'.$$
Again using the maximum principle, the fact that   $g$ is entire,
and our ability to find such a $\rho'$ for each $R>10$, 
$$|\exp(g(z))|\leq C \exp (\exp C |z|)),\; z\in \Complex.$$
This implies that 
$$\Re g(z) \leq C \exp (C|z|).$$
By Carath\'eodory's theorem (e.g. \cite[Theorem I.8]{levin}), 
$$|g(z)| \leq C \exp (C|z|).$$
\end{proof}

Note that the hypotheses of Theorem \ref{thm:samepoles} ensure that $\sum_{M(z_k)>0}
|z_k|^{-(1-3\epsilon_0/4)}<\infty$.
\begin{lemma}\label{l:exptype1}
Under the hypotheses of Theorem \ref{thm:samepoles},
set $P_1(z)=\prod_{M(z_k)>0} E_0 (z/\overline{z_k})$ and $Q_1(z)= \prod_{M(z_k)>0} E_0(z/z_k)$.
 Then there is a constant $C$ so that
$$\left| \frac{F'(z)}{F(z) } P_1(z)Q_1(z) \right| 
\leq C \exp (C|z|).$$
\end{lemma}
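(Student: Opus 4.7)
The plan is to take the logarithmic derivative of the factorization $F=e^{g}\,P(\cdot,m)/Q(\cdot,m)$ from Proposition \ref{prop:gest}, check that the simple zeros of $P_1$ and $Q_1$ cancel the simple poles of $P'/P$ and $Q'/Q$ (and hence of $F'/F$), and then estimate each of the resulting entire functions separately.

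From the factorization,
\begin{equation*}
\frac{F'(z)}{F(z)} = g'(z)+\frac{P'(z,m)}{P(z,m)}-\frac{Q'(z,m)}{Q(z,m)}.
\end{equation*}
Cauchy's integral formula on the unit disc about $z$, applied to the bound $|g(w)|\leq C\exp(C|w|)$ from Proposition \ref{prop:gest}, yields $|g'(z)|\leq C\exp(C|z|)$. Using $\frac{d}{dw}\log E_{m_1}(w)=-w^{m_1}/(1-w)$, a direct computation gives
\begin{equation*}
P_1(z)\,\frac{P'(z,m)}{P(z,m)}=-z^{m_1}\sum_{l}\frac{M(z_l)}{\overline{z_l}^{\,m_1+1}}\,P_1^{(l)}(z),\qquad P_1^{(l)}(z):=\prod_{k\neq l}(1-z/\overline{z_k}),
\end{equation*}
with an analogous identity for $Q_1(z)\,Q'(z,m)/Q(z,m)$; in particular these two products are entire.

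For the growth bound, set $\sigma=1-3\epsilon_0/4$ and fix $\delta>0$ with $\sigma+\delta<1$. The observation recorded just before the lemma states that the distinct zero sequences of $P_1$ and $Q_1$ have exponent of convergence at most $\sigma$, so standard canonical product estimates (e.g.\ \cite[Ch.~I]{levin}) give $|P_1(z)|,|Q_1(z)|\leq C\exp(C|z|^{\sigma+\delta})$. Removing a single factor only decreases the controlling sum $\sum_k|z_k|^{-(\sigma+\delta)}$, so the same estimate holds for $|P_1^{(l)}(z)|$ and for its $Q$-analogue, uniformly in $l$. The counting bound (\ref{eq:notmanypoles1}) forces the $|z_l|$ to be bounded away from $0$; together with (\ref{eq:notmanypoles2}) and the choice $m_1+1\geq m$ this makes $S:=\sum_l M(z_l)/|z_l|^{m_1+1}$ convergent. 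Consequently $|P_1(z)\,P'(z,m)/P(z,m)|\leq C|z|^{m_1}S\exp(C|z|^{\sigma+\delta})$, and an identical estimate holds for the $Q$-piece.

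Combining the three terms in $P_1 Q_1 F'/F$, each is dominated by the product of at most one factor $\exp(C|z|)$ (coming from $g'$) and at most two factors $\exp(C|z|^{\sigma+\delta})$, times $|z|^{m_1}$. Since $\sigma+\delta<1$, the polynomial and subexponential factors are absorbed into $\exp(C|z|)$, proving the stated bound. The only non-routine step is the uniform-in-$l$ control of $|P_1^{(l)}|$, which is the main obstacle one must explicitly verify; it is handled by noting that the standard canonical product argument produces a constant depending only on sums that are monotone in the sequence of zeros.
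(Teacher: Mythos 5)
Your proposal is correct and follows essentially the same route as the paper: log-differentiate the factorization from Proposition \ref{prop:gest}, cancel the simple poles of $P'/P$ and $Q'/Q$ against the zeros of $P_1$ and $Q_1$, bound $g'$ by Cauchy's estimate, expand $P_1\,P'/P$ as a sum of the products with one factor removed, and control that sum by the convergence of $\sum M(z_l)/|z_l|^{m_1+1}$ together with a $k$-uniform canonical-product bound. The one spot where you add something beyond what the paper records is the explicit justification of $l$-uniformity for $|P_1^{(l)}|$ via monotonicity of the controlling sums, which the paper asserts without comment; that is a correct and welcome clarification rather than a deviation.
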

\begin{proof} 
We use the functions $P$, $Q$, and $g$ from Proposition \ref{prop:gest},
noting our assumption on the poles of $F$ in Theorem  \ref{thm:samepoles}
includes that the sum (\ref{eq:sumconv}) converges for some finite value of $m$.  

By Proposition \ref{prop:gest},
\begin{equation}
\frac{F'(z)}{F(z)}= g'(z) +\frac{P'(z,m)}{P(z,m)} - \frac{Q'(z,m)}{Q(z,m)}.
\end{equation}
Hence
\begin{multline}\label{eq:wanttobd}
\frac{F'(z)}{F(z)}P_1(z)Q_1(z)= g'(z)P_1(z)Q_1(z) +
\frac{P'(z,m)}{P(z,m)} P_1(z)Q_1(z)- \frac{Q'(z,m)}{Q(z,m)}P_1(z)Q_1(z).
\end{multline}
Note that $P'/P$ and $Q'/Q$ have simple poles which coincide with the 
zeros of $P_1$ and $Q_1$, respectively.  Hence $F' P_1Q_1/F$ is entire.
Since $|g(z)|\leq C\exp(C|z|)$, by Cauchy's estimate the same inequality holds
for $g'(z)$, with perhaps a new constant $C$.  Moreover, $P_1(z)$, $Q_1(z)$
satisfy (\ref{eq:cpb}) with $m=1-\epsilon_0/2$.  Hence 
it is easy to see that the first term in (\ref{eq:wanttobd}) is bounded 
as claimed.

Consider
\begin{align*}\frac{P'(z,m)}{P(z,m)} P_1(z)
 & =-\left(  \sum_k 
\frac{M(z_k)}{\overline{z_k}}\frac{( z/\overline{z_k})^{m_1}}
{1-z/\overline{z_k} }\right)
\left( \prod_l ( 1-z/\overline{z_l}) \right)\\
& = -\sum_k
\left( \frac{M(z_k)z^{m_1}}{\overline{z_k}^{m_1+1}} \prod_{l:\;l\not = k}( 1-z/\overline{z_l})\right).
\end{align*}

Using the bounds on canonical products and the assumption on $\{z_l\}$ we
can bound 
$$\left|\prod_{l:l\not = k}( 1-z/\overline{z_l})\right|\leq C \exp (C |z|^{1-\epsilon_0/4})$$
with a constant $C$ chosen independent of $k$. 
 But
$$\sum_k \left| \left(  \frac{M(z_k)z^{m_1}}
{\overline{z_k}^{m_1+1} }
 \right)\right| 
=\left(  \sum_k \frac{M(z_k)}{|z_k|^{m_1 +1}} \right)
 |z|^{m_1}
\leq C |z|^{m_1}$$
for some constant $C$
using our assumptions on the convergence of 
$\sum M(z_k)/|z_k|^{ m }$.
Thus the second term is bounded as desired.  The third term in 
(\ref{eq:wanttobd}) is bounded in exactly the same way as the second.
\end{proof}

\section{Proof of Theorem \ref{thm:samepoles} }\label{s:samepoles}
The next proposition proves Theorem \ref{thm:samepoles} when
$d=2$, and is an important step in the proof of Theorem \ref{thm:samepoles}
for $d\geq 4$.  Here we use the notation $\delta_{j,k}$ for the Kronecker delta
function.  In the statement of the proposition, we understand $\lambda^{d-4},\;
\lambda^{d-2}\in \Complex$.  More carefully this might be denoted 
$(\pr(\lambda))^{d-4},\; (\pr(\lambda))^{d-2}$, where $\pr: \Lambda\rightarrow
\Complex$ is the natural projection, where points of $\Lambda$ with argument
differing by an integral multiple of $2\pi$ are identified.
\begin{prop}\label{p:almostsamepoles} Under the assumptions and using the notation of 
Theorem \ref{thm:samepoles}, $F(z)$ is an entire function.
Moreover, if $d\geq 4$
\begin{equation}\label{eq:almostequal}
\det S_1(\lambda)   = \exp\left(- i
 c_d (1-\delta_{d,4})\alpha_1 \lambda^{d-4} - i
c_d \alpha_2  \lambda^{d-2}\right)
\det S_2(\lambda)
\end{equation}
where $c_d= \pi(2\pi)^{-d} \vol(\Sphere^{d-1}) $.
Here
\begin{align*}
\alpha_1 & =  \| {\mathcal P}_{0,1} V_1\|^2 - \| {\mathcal P}_{0,2} V_2 \|^2\\
\alpha_2 & = \int(V_1(x)-V_2(x))dx,
\end{align*}
with $\mathcal{P}_{0,j}$
denoting projection onto the $L^2$ null space of $P_j$.  If $d=2$, 
$\det S_1(z)=\det S_2(z)$.
\end{prop}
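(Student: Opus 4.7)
The plan is to combine Lemma \ref{l:exptype1}, which bounds $F'P_1Q_1/F$ by $Ce^{C|z|}$, with the real-axis asymptotics of $F'/F$ coming from Theorem \ref{thm:derivasmpt} (at $+\infty$) and Lemma \ref{l:near0} (at $-\infty$), to pin down $F'/F$ as an explicit entire function and thereby force $F$ to have no poles. I first treat the case $d \geq 4$. Introduce
$$\Phi(z) := \frac{F'(z)}{F(z)} + ic_d(d-2)\alpha_2 e^{(d-2)z} + ic_d(d-4)(1-\delta_{d,4})\alpha_1 e^{(d-4)z}.$$
Applied to both $V_1$ and $V_2$, Theorem \ref{thm:derivasmpt} gives $\Phi(x) = O(e^{(d-5/2)x})$ as $x \to +\infty$ along $\Real$, while Lemma \ref{l:near0} (with Hypothesis \ref{hy:H1} when $d=4$, or the smoothness assumption in the alternative) gives $\Phi(x) = O(|x|\,e^{(d-2)x})$ as $x \to -\infty$; note that in dimension four the factor $1-\delta_{d,4}$ is consistent with the vanishing of $\mathcal{P}_{0,j}V_j$.

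The function $\Phi(z)P_1(z)Q_1(z)$ is entire: the only poles of $F'/F$ are simple poles at $z_l$ and $\overline{z_l}$ with residues $\mp M(z_l)$, and these are cancelled by $P_1Q_1$. By Lemma \ref{l:exptype1} and the order-at-most-$(1-\epsilon_0)$ canonical product bounds, $\Phi P_1 Q_1$ is of exponential type. The crux is to deduce that $\Phi \equiv 0$. My approach is a Phragm\'en--Lindel\"of argument exploiting the unitarity symmetry $F(z)\overline{F(\bar z)} = 1$ (so $|F(x)|=1$ for real $x$), which constrains the growth of $F$, and hence of $\Phi P_1 Q_1$, on vertical lines. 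Combined with the real-axis decay of $\Phi$ and the subexponential growth of $P_1Q_1$ on complex lines, this should force $\Phi P_1 Q_1 \equiv 0$. I anticipate this Phragm\'en--Lindel\"of step to be the main obstacle, since it requires careful balancing of the exponential types of the correction terms against the growth of $P_1Q_1$ in non-real directions.

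Once $\Phi \equiv 0$, $F'/F$ equals an entire function with no residues, forcing all $M(z_l) = 0$ so that $F$ is entire and nonvanishing. Integrating yields
$$\log F(z) = -ic_d\alpha_2 e^{(d-2)z} - ic_d(1-\delta_{d,4})\alpha_1 e^{(d-4)z} + C,$$
and the constant $C$ is identified by sending $z \to -\infty$ and matching low-energy limits of $\det S_1$ and $\det S_2$ (the eigenvalue contributions, consistent with Corollary \ref{c:negativeeigenvalues}, enter symmetrically in $F$). Exponentiating gives (\ref{eq:almostequal}). For $d=2$ the correction terms degenerate (the coefficient in Theorem \ref{thm:derivasmpt} vanishes and the $(d-4)$-term is excluded by $(1-\delta_{d,4})$ being irrelevant since $d-2=0$), and the same scheme shows $F$ is an entire function of order zero with $|F(x)|=1$ on $\Real$; it must therefore be a constant of modulus one, which the low-energy behavior of $\det S_j$ near $0$ pins down as $1$, giving $\det S_1 = \det S_2$.
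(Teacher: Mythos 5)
The overall skeleton you sketch is the same as the paper's: form the combination $\Phi = F'/F + ic_d(d-2)\alpha_2 e^{(d-2)z} + ic_d(d-4)\alpha_1 e^{(d-4)z}$, multiply by $P_1 Q_1$ to remove the poles, show the result is entire of exponential type, then force it to vanish identically by growth considerations on the real axis. However, there is a concrete gap at the "crux" you flag yourself: with $\Phi$ alone you do \emph{not} have decay on the positive real axis. By Theorem~\ref{thm:derivasmpt} and the chain rule (an extra factor $e^z$), you get $\Phi(x) = O(e^{(d-5/2)x})$ as $x\to+\infty$, which for $d\geq 4$ is exponential \emph{growth}, not decay. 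Your only decay is at $x\to -\infty$, where $\Phi(x)=O(|x|\,e^{(d-2)x})$. An entire function of exponential type that grows exponentially in one direction on the real axis cannot be forced to vanish. The unitarity identity $F(z)\overline{F(\bar z)}=1$ does not help on vertical lines the way you suggest: it reflects $S(\lambda)$ being unitary only for $\lambda>0$, i.e., $z$ real; for $z$ off the real axis, $e^z$ is not on the positive half-line and $S(e^z)$ is not unitary, so $|F|$ is not controlled there.

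The missing ingredient is the balancing exponential weight. The paper multiplies by $e^{-(d-9/4)z}$, where $d-9/4$ is exactly the average of $d-5/2$ (the growth rate at $+\infty$) and $d-2$ (the decay rate at $-\infty$). After this shift, the resulting function $G$ decays like $e^{-x/4+C|x|^{1-\epsilon_0/2}}$ as $x\to+\infty$ \emph{and} like $e^{x/4+C|x|^{1-\epsilon_0/2}}$ as $x\to-\infty$, while remaining entire of exponential type, so \cite[Corollary~5.1.14]{boas} applies. Without this shift (or an equivalent device), the Phragm\'en--Lindel\"of step is unsupported, and the whole argument stalls. Two smaller points: for $d=4$ without Hypothesis~\ref{hy:H1} but with smooth potentials, the paper must switch to a modified function $G_4$ using the stronger full asymptotic expansion available for $C_c^\infty$ potentials (\cite{gu,pop}); your proposal names the case but does not actually handle the asymmetric error estimate in (\ref{eq:noH1}). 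Finally, once $\Phi\equiv 0$ is established, the constant of integration is fixed by $\lim_{\lambda\downarrow 0}\det S_j(\lambda)=1$, exactly as you describe, and this part of your argument is fine.
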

\begin{proof}
We first prove the proposition assuming either that $d\not =4$ or $d=4$ and 
Hypothesis \ref{hy:H1} holds.

Consider the function defined by 
\begin{multline}\label{eq:G}
G(z)= 
e^{-(d-9/4)z} \left( \frac{F'(z)}{F(z)}+i(d-4)(1-\delta_{d,2})c_d\alpha_{1}e^{(d-4)z}
+i(d-2)c_d \alpha_2 e^{(d-2)z}
\right) \\ \times \prod_{M(z_l)>0} (1-z/z_l)(1-z/\overline{z_l}).
\end{multline}
%$$G(z)= \left\{ \begin{array}{ll}
%e^{-(d-9/4)z} \left( \frac{F'(z)}{F(z)}- (d-2)c_d e^{(d-2)z}
%\int(V_1(x)-V_2(x))dx\right) \prod (1-z/z_j)(1-z/\overline{z_j}) & \text{if $d>4$}\\
%e^{z/2} \frac{F'(z)}{F(z)} \prod (1-z/z_j)(1-z/\overline{z_j}) & \text{if $d=2$}
%\end{array}
%\right.
To motivate our definition of $G$, notice that by Lemma 
\ref{l:near0} the first term after
$F'(z)/F(z)$, when evaluated at $z=x\in \Real$, is (up to sign)
 the leading term of
$F'(x)/F(x)$ when $x\rightarrow -\infty$.  By Theorem
\ref{thm:derivasmpt} the next term corresponds
to the leading term when $x\rightarrow \infty$.
Note that when $d=2$, the coefficients of both $\alpha_1$ and $\alpha_2$
are $0$, and when $d=4$ the coefficient of $\alpha_1$ is $0$.  The 
multiplication by $\prod(1-z/z_l)(1-z/\overline{z}_l)$ ensures the
function $G$ is analytic.

%When $d=4$, the definition for $G$ is more complicated if 
%$-\Delta +V_j$ has a resonance for $j=1$ or $j=2$ (or both).
%Since $F'/F$ has simple poles at both the poles and zeros of $F$
%and $z_0$ is a pole of $F$ if and only if $\overline{z}$ is a zero of 
%$F$, $G$ is an entire function.  
By our assumptions on $\{z_l\}$ and estimates for canonical products (e.g. 
\cite[Theorem 1.7]{levin}), 
there is a constant $C$ so that 
\begin{equation}\label{eq:prodbd}
\left| \prod_{M(z_l)>0} (1-z/z_l)(1-z/\overline{z_l}) \right| \leq C 
\exp (C|z|^{1-\epsilon_0/2}).
\end{equation}
%Combining this with the intermediate estimate
%(\ref{eq:sjbd}), valid for $\rho_1=\rho_2\in (R/2,R)$ of the proof of 
%Proposition \ref{prop:gest},
%we find that for some constant $C$ 
%\begin{equation}\label{eq:exptype}
%|G(z)|\leq Ce^{C|z|}.
%\end{equation}
Combining this with the result of Lemma \ref{l:exptype1}, we find
there is a constant $C$ so that 
\begin{equation}\label{eq:exptype}
|G(z)|\leq Ce^{C|z|}.
\end{equation}
Since  $G$ is analytic, $G$ is a function of exponential type (see e.g. \cite{boas}).
%{\em still need to modify $G$ if $d=4$ and $0$ is a resonance.  The proof
%as written does not handle the case of $d=4$ and zero resonance.}

By Theorem \ref{thm:derivasmpt} and (\ref{eq:prodbd}), 
 for $x\in \Real$, $|G(x)|=O(e^{-x/4+C|x|^{1-\epsilon_0/2}})$ as $x\rightarrow
\infty.$  

Next we consider the behavior of $G(x)$ when $x\in \Real$, 
$x\rightarrow -\infty$.  If $d\geq 4$, by Lemma \ref{l:near0}
then $G(x)= O(|x| e^{x/4+C|x|^{1-\epsilon_0/2}})$ as $x\rightarrow -\infty$.  
If $d=2$, results of \cite{b-g-d} (when $\int V_j(x)\not =0$)
or \cite{chobstacle} (for $\int V_j=0$), imply that
$F'(x)/F(x)= O(x^{-2}) $ when $x\rightarrow -\infty$.  Combining these
asymptotics with 
(\ref{eq:prodbd}) implies
 that for $x\in \Real$, there is a constant 
$C>0$ such that $G(x)= O(\exp(x/4+ C|x|^{1-\epsilon_0/2}))$ when $x\rightarrow
-\infty$.

Now noting the bound (\ref{eq:exptype}) on $|G(z)|$ and the fact that $G(z)$
decays exponentially on the real axis, \cite[Corollary 5.1.14]{boas}
shows that $G\equiv 0$.  Hence $F'(z)/F(z)\equiv 0$ if
$d=2$, and 
$$F'(z)/F(z)= -i(d-4)c_d\alpha_{1}e^{(d-4)z}
- i(d-2)c_d \alpha_2 e^{(d-2)z} \; \text{if $d\geq 4$}.$$   Returning
to the $\lambda$ variable 
%and using Proposition \ref{prop:derivasmpt},
we find, for $d\geq 4$
$$\frac{\frac{d}{d\lambda} \det S_1(\lambda)}{
\det S_1(\lambda)} = 
\frac{\frac{d}{d\lambda} \det S_2(\lambda)}{
\det S_2(\lambda)} - i (d-4)c_d \alpha_1 \lambda^{d-5}- 
i c_d(d-2) \alpha_2 \lambda^{d-3}.$$
Recalling that
$\lim_{\lambda \in \Real_+, \lambda \downarrow 0}\det S_j(\lambda)=1$
(\cite{b-g-d} or \cite[Proposition 6.1]{chobstacle})
finishes the proof if $d\not =4$ or if $d=4$ and Hypothesis \ref{hy:H1} holds.

To complete the proof, we consider the remaining case, which is $d=4$ 
and $V_1,V_2\in C_c^\infty(\Real^4;\Real)$.  
In this case we define, in analogy with (\ref{eq:G}),
$$G_4(z) =e^{z} \left( \frac{F'(z)}{F(z)}+ i2c_4 \alpha_2 e^{2z}
\right)   \prod_{M(z_l)>0} (1-z/z_l)(1-z/\overline{z_l}).$$
Using that $V_1,\; V_2\in C^\infty_c(\Real^4;\Real)$, by \cite{gu,pop},
$$(\det S_{j}(\lambda))^{-1}\frac{d}{d\lambda}\det S_{j}(\lambda)
= -2ic_4\int V_j(x')dx' \lambda + O(\lambda^{-N}), \lambda >0,\; \lambda \rightarrow \infty$$
for any $N\in \Natural$.  Hence for $x>0$,  $G_4(x)=O(e^{(2-N)x})$ as $x\rightarrow
\infty$ for any $N\in \Natural$.  On the other hand, by Lemma 
\ref{l:near0}, if $x\in \Real$, $G_4(x)=O(x^{-2}e^{x+x-x})=O(x^{-2}e^{x})$ as 
$x\rightarrow -\infty$.  Now using that $G_4$ is an entire function of
exponential type decaying exponentially on the 
real axis, as before   \cite[Corollary 5.1.14]{boas}
shows that $G_4\equiv 0$.  The remainder of the proof follows as in the 
first case.
\end{proof}

The function $f$ defined below will appear in the proof of Lemma \ref{l:klimit}.
\begin{lemma}\label{l:flemma}
Set
$$f(t,r)=\frac{2t}{1-2rt+2r^2t}\; \text{for $t\in[0,2]$, $r\geq 1$.}$$
Then 
for fixed $r_0\geq 1$, $f(t,r_0)$ is an increasing function of $t\in[0,2]$,
and $0\leq f(t,r_0)\leq 4/(1-4r_0+4r_0^2)= 4/(1-2r_0)^2$.
For fixed $t_0\in [0,2]$, $f(t_0,r)$ is a decreasing function 
of $r\geq 1$, $lim_{r\rightarrow \infty}f(t_0,r)=0$ and $f(0,r)=0$.
\end{lemma}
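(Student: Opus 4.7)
The plan is to verify each assertion by direct computation, since $f$ is an explicit rational function on the compact rectangle $[0,2]\times [1,\infty)$.

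First I would rewrite the denominator as $1-2rt+2r^2t = 1 + 2rt(r-1)$, which makes it visibly $\geq 1$ for $t\in[0,2]$, $r\geq 1$, so $f$ is smooth and nonnegative on the region of interest, and one has $f(0,r)=0$ for free. Next I would compute $\partial_t f$ at fixed $r=r_0$: writing $D=1-2rt+2r^2t$, one has $\partial_t D = 2r(r-1)$, and the cross terms cancel neatly in
\begin{equation*}
\partial_t f = \frac{2D - 2t\cdot 2r(r-1)}{D^2} = \frac{2}{D^2}>0,
\end{equation*}
which gives strict monotonicity in $t$. The upper and lower bounds on $f(\cdot,r_0)$ then follow from evaluating at the endpoints: $f(0,r_0)=0$ and $f(2,r_0)=4/(1-4r_0+4r_0^2)=4/(1-2r_0)^2$.

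For the dependence on $r$ at fixed $t_0\in[0,2]$, I would compute $\partial_r D = 2t(2r-1)$, so that
\begin{equation*}
\partial_r f = -\frac{2t\cdot 2t(2r-1)}{D^2} = -\frac{4t^2(2r-1)}{D^2}\leq 0
\end{equation*}
for all $r\geq 1$ and $t\geq 0$, proving that $f(t_0,\cdot)$ is decreasing on $[1,\infty)$. The equality $f(0,r)=0$ is immediate from the formula, and for $t_0>0$ the denominator grows like $2r^2 t_0$ while the numerator is constant, yielding $\lim_{r\to\infty} f(t_0,r)=0$.

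There is no real obstacle here; the only point requiring a moment of care is the algebraic cancellation in $\partial_t f$ and verifying that $D>0$ throughout the domain. Both are routine once one writes $D = 1 + 2rt(r-1)$.
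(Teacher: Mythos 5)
Your proof is correct and takes the same route the paper does; the paper simply says the claims are ``immediate from inspection and elementary calculus,'' and you have carried out exactly those computations. The key simplification $D = 1 + 2rt(r-1)$ and the cancellation giving $\partial_t f = 2/D^2$ are the right observations and everything checks out.
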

\begin{proof}
These properties are immediate from inspection and elementary calculus.
\end{proof}

Although we state the following lemma for the determinant of the scattering
matrix for a Schr\"odinger operator, it is valid for a much larger class of 
operators.  In fact, if $P$ is an appropriate self-adjoint ``black-box'' perturbation
of $-\Delta$ on $\Real^d$, $d$ even, then the following lemma is valid for
the scattering matrix of $P$.  See \cite{sj-zw} for the definition of the 
black-box perturbation.  (For the following to hold, though, 
we need in addition the existence of an involution on the underlying 
Hilbert space which 
commutes with $P$ and which agrees with complex conjugation 
``at infinity.'') 
%For this reason, we write
%the scattering matrix as $S(\lambda)$, rather than the 
%$S_V$ we often use, in the following lemma.  
Note that in the following 
lemma, since $\rho>0$, $k\in \Natural$, the point $\rho e^{i\pi k}\in \Lambda$
projects in the complex plane to the real axis, so that the square of the 
projection lies in the continuous
spectrum of $P$.
\begin{lemma}\label{l:klimit}
Let $d$ be even, $V\in L^\infty_c(\Real^d;\Real)$ and let 
$S(\lambda)$ denote the scattering matrix 
associated to $-\Delta +V$.  Let $k\in \Natural$ and fix $\rho>0$.
Then $\lim _{ k\rightarrow \infty} \det S(\rho e^{i k\pi})=1$.
\end{lemma}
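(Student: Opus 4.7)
The plan is to use the factorization
\begin{equation*}
\det S(\lambda) \;=\; \frac{\det(I + R_0(e^{i\pi}\lambda)V\chi)}{\det(I + R_0(\lambda)V\chi)}
\end{equation*}
(where $\chi\in C_c^\infty(\Real^d)$ satisfies $\chi V = V$), which follows from the identity $S = I - 2\pi i\,\Gamma_0 V(I+R_0V)^{-1}\Gamma_0^*$ equivalent to (\ref{eq:scattmatrix}), together with $2\pi i\,\Gamma_0(\lambda)^*\Gamma_0(\lambda) = R_0(\lambda) - R_0(e^{i\pi}\lambda)$ and cyclicity of the Fredholm determinant.

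The next ingredient is the explicit monodromy of the free resolvent on $\Lambda$. In $d=2$, the Bessel identity $H_0^{(1)}(ze^{im\pi})=H_0^{(1)}(z)-2mJ_0(z)$ yields
\begin{equation*}
\chi R_0(\rho e^{ik\pi})\chi \;=\; \chi R_0(\rho)\chi \;-\; \tfrac{ik}{2}\,\chi\,\Pi(\rho)\,\chi, \qquad \Pi(\rho)(x,y)=J_0(\rho|x-y|),
\end{equation*}
and an analogous closed formula holds in every even dimension (the jump in each sheet is a fixed trace-class operator built from generalized eigenfunctions). Setting $d_k(\rho):=\det(I + R_0(\rho e^{ik\pi})V\chi)$ and $M(\rho):=(I+R_0(\rho)V\chi)^{-1}(-\tfrac{i}{2}\Pi(\rho)V\chi)$, factoring $I+R_0(\rho)V\chi$ out of $d_k$ gives $d_k(\rho)=d_0(\rho)\det(I + kM(\rho))$, and hence
\begin{equation*}
\det S(\rho e^{ik\pi}) \;=\; \frac{d_{k+1}(\rho)}{d_k(\rho)} \;=\; \prod_{l}\frac{1+(k+1)\mu_l}{1+k\mu_l},
\end{equation*}
where $\{\mu_l\}$ are the nonzero eigenvalues of the trace-class operator $M(\rho)$, counted with algebraic multiplicity.

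Each factor equals $1+\mu_l/(1+k\mu_l)$ and tends to $1$ as $k\to\infty$. To conclude convergence of the infinite product to $1$, I would apply Lemma \ref{l:flemma}: the function $f(t,r)$ is designed to bound $|\mu_l/(1+k\mu_l)|$ in terms of $t$ proportional to $|\mu_l|$ and a parameter $r\ge 1$ growing with $k/|\mu_l|$. The monotonicity of $f$ in $t$ and its decay in $r$, combined with the summability $\sum|\mu_l|<\infty$ from trace-classness, let one split $\sum_l |\mu_l/(1+k\mu_l)|$ into a tail over small $|\mu_l|$ (controlled by the vanishing tail of $\sum|\mu_l|$) and a finite head (each term of which vanishes individually as $k\to\infty$), yielding $\det S(\rho e^{ik\pi})\to 1$.

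The main obstacle is handling factors where $1+k\mu_l$ is close to $0$; these correspond exactly to $\rho e^{ik\pi}$ approaching a resonance on the $k$-th sheet. The hypothesis $r\ge 1$ in Lemma \ref{l:flemma} keeps $f$ bounded even at the worst endpoint $t=2$, providing uniform quantitative control insensitive to the precise spacing between $k$ and the resonance set. This is consistent with the fact that, for integer $k$ and fixed $\rho>0$, the sequence $\{\rho e^{ik\pi}\}$ is a locally finite subset of $\Lambda$ that avoids the discrete resonance set for all sufficiently large $|k|$.
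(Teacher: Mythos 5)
Your starting point differs from the paper's: you factor $\det S$ through the regularized determinants $d_k(\rho)=\det(I+R_0(\rho e^{ik\pi})V\chi)$ and the monodromy of the free resolvent, whereas the paper uses the identity $S^*(\overline\lambda)=2I-JS(e^{i\pi}\lambda)J$ (quoted from \cite{ch-hi4}) to write $\det S(\rho e^{ik\pi})=\det\bigl(I+[S(\rho)-I][I+k(S(\rho)-I)]^{-1}\bigr)$ directly. Your algebraic reduction $d_k=d_0\det(I+kM(\rho))$ is correct and, since both sides agree at $k=0$ and satisfy the same telescoping relation $d_{k+1}/d_k=\det S(\rho e^{ik\pi})$, one can show $\det(I+kM(\rho))=\det\bigl(I+k(S(\rho)-I)\bigr)$, so the nonzero eigenvalues $\mu_l$ of $M(\rho)$ are precisely the numbers $e^{i\theta_j}-1$ with $e^{i\theta_j}$ in the (unitary) spectrum of $S(\rho)$. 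That is the identification your proposal never makes, and it is the load-bearing fact.

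Without knowing that each $\mu_l+1$ lies on the unit circle, your estimates do not close. First, the uniform-in-$k$ tail bound requires $|1+k\mu_l|\geq 1$ for every $k\geq 1$, and this is a consequence of $\mu_l+1\in\Sphere^1$; for a general complex trace-class spectrum it is simply false, and then $\sum_l|\mu_l/(1+k\mu_l)|$ cannot be controlled by $\sum_l|\mu_l|$. Second, Lemma~\ref{l:flemma} is not a generic bound on $|\mu/(1+k\mu)|$: the identity $\bigl|(e^{i\theta}-1)/(1+k(e^{i\theta}-1))\bigr|^2=f(1-\cos\theta,k)$ uses $|e^{i\theta}-1|^2=2(1-\cos\theta)$ and $|1+k(e^{i\theta}-1)|^2=1-2kt+2k^2t$ with $t=1-\cos\theta\in[0,2]$, both of which are consequences of unitarity. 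There is no natural way to feed an arbitrary $\mu_l$ into $f(t,r)$ with "$t$ proportional to $|\mu_l|$", and the lemma's hypotheses $t\in[0,2]$, $r\geq 1$ have no reason to hold. Third, your closing observation that $\{\rho e^{ik\pi}\}_{k\in\Integers}$ eventually avoids the resonance set is not something you can assume --- the resonance set on $\Lambda$ is in general infinite (indeed the whole point of the paper is that it is), and no separation from it is available or needed: the denominator never degenerates because $|1+k(e^{i\theta_j}-1)|\geq 1$, a fact which again comes from unitarity, not from spacing. So the missing step is to observe that $M(\rho)$ and $S(\rho)-I$ have the same nonzero eigenvalues, and then run the estimate exactly as the paper does on the unitary spectrum $\{e^{i\theta_j}\}$ of $S(\rho)$.
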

\begin{proof}
Since
$$\det S(e^{ik\pi}\lambda)=
\frac{ \det S(\lambda) \det S(\lambda e^{i\pi})\cdot \cdot \cdot \det S(\lambda e^{i(k-1)\pi})\det S(\lambda e^{ik\pi})}
{\det S(\lambda) \det S(\lambda e^{i\pi})\cdot \cdot \cdot 
\det S(\lambda e^{i(k-1)\pi})}
$$
we have using \cite[Lemma 3.4]{ch-hi4} 
(compare the proof of \cite[Proposition 3.5]{ch-hi4})
that
\begin{align*}
\det S(e^{ik\pi}\lambda)& = \frac{\det ( (k+1) S(\lambda)-kI)}
{\det ( k S(\lambda)-(k-1)I)}\\ & =\det \left( 
[ (k S(\lambda)-(k-1)I + S(\lambda)-I ] [k S(\lambda)-(k-1)I]^{-1} 
\right)
\\& 
 = 
\det(I + [S(\lambda)-I][I+k(S(\lambda)-I)]^{-1}).
\end{align*}
Now we specialize to $\lambda =\rho\in (0,\infty)$, and recall that
$S(\rho)$, being unitary, has eigenvalues $\{ e^{i\theta_j}\}$, 
$\theta_j \in \Real$.  
The $\theta_j$ depend on $\rho$, but since $\rho$ is fixed, 
we do not denote this dependence.  

Then
$$\det S(e^{ik\pi}\rho)= \prod_j 
\left(1+ \frac{e^{i\theta_j}-1}{1+k(e^{i\theta_j}-1)}
\right).$$
For $k\geq 1$
\begin{equation}
 | 1+k(e^{i\theta}-1)| \geq 1
\end{equation}
so that $|(e^{i\theta_j}-1)(1+k(e^{i\theta_j}-1))^{-1}|\leq |e^{i\theta_j}-1|$.
Hence given $\epsilon >0$, using the trace class properties of
$S(\tau)-I$  we can find $J\in \Natural$ independent of $k\in \Natural$
 so that
\begin{equation}\label{eq:smalltheta}
\left| 1-\prod_{j>J} \left(1+ \frac{e^{i\theta_j}-1}{1+k(e^{i\theta_j}-1)}
\right) \right| < \epsilon/5.
\end{equation}
By a straightforward computation
$$\left| \frac{e^{i\theta_j}-1}{1+k(e^{i\theta_j}-1) }\right| ^2
= f(1-\cos \theta_j, k)
$$ 
where $f$ is the function defined in Lemma \ref{l:flemma}.  
Then
$$\lim_{k\rightarrow \infty}
\left| \frac{e^{i\theta_j}-1}{1+k(e^{i\theta_j}-1)} \right| =0
\; \text{for all $j\leq J$}.
$$
Since the product over $j\leq J$ is a finite product, this means
$\lim_{k\rightarrow \infty}\prod_{j\leq J} 
\left(1+ \frac{e^{i\theta_j}-1}{1+k(e^{i\theta_j}-1)}\right)=1$, and 
we can find $K\in \Natural $ so that 
$$ \left| 1-\prod_{j\leq J} 
\left(1+ \frac{e^{i\theta_j}-1}{1+k(e^{i\theta_j}-1)}
\right) \right| < \epsilon/5\; \text{for $ k\geq K$,\; $k\in \Natural$}.$$
Together with (\ref{eq:smalltheta}) this shows that 
for $1>\epsilon >0$ 
$$\left | 1- \det S(e^{ik\pi}\rho)\right|
= \left|  1-\prod_{j} 
\left(1+ \frac{e^{i\theta_j}-1}{1+k(e^{i\theta_j}-1)}\right) \right| \leq \epsilon
\; \text{for}\; k\geq K,\; k\in \Natural.$$
Since $\epsilon>0$ was arbitrary, this concludes the proof.
\end{proof}

There does not seem to be anything analogous to Lemma \ref{l:klimit} in odd
dimensions.  This lemma allows us to improve
Proposition \ref{p:almostsamepoles} to  Theorem \ref{thm:samepoles}.

\vspace{2mm}
\noindent{\em Proof of Theorem \ref{thm:samepoles}.} 
Proposition \ref{p:almostsamepoles} has proved most of Theorem 
\ref{thm:samepoles}, including the
$d=2$ case.  Thus, suppose $d\geq 4$.  From Proposition \ref{p:almostsamepoles},
$$\det S_1(\lambda) =e^{iq(\lambda)} \det S_2(\lambda) $$
where $q:\Lambda\rightarrow \Complex$ is given by 
\begin{equation}
q(\lambda)=-
 c_d (1-\delta_{d,4})\alpha_1 \lambda^{d-4} - 
c_d \alpha_2  \lambda^{d-2} .
\end{equation}
It remains to show that $e^{iq(\lambda)}\equiv 1$.

Notice that for $\rho>0$, $k\in \Natural$, $q(\rho e^{i\pi k})=q(\rho)$ since $d-2$, $d-4$ are even.  Hence, for $\rho>0$, $k\in \Natural$,
$\det S_1(\rho e^{i\pi k})= e^{i q(\rho)} \det S_2(\rho e^{i \pi k}).$
Fixing $\rho>0$, by Lemma \ref{l:klimit} $\lim_{k\rightarrow \infty }
\det S_1(\rho e^{i\pi k})=1=\lim_{k\rightarrow \infty }
\det S_2(\rho e^{i\pi k})$, so that $e^{i q(\rho)}=1$.  Since this is true 
for all $\rho>0$, we must have $e^{iq}\equiv 1$.
\qed

\section{Proofs of Theorems \ref{thm:Hm}, \ref{thm:infinitelymany}, and 
 \ref{thm:heatcoefficients} and Corollary \ref{cor:infinitediff} }
\label{s:thmpfs}  With Theorem \ref{thm:samepoles} in hand, 
the remainder of the proof of Theorem \ref{thm:Hm} is similar to the proof of 
\cite[Theorem 1.2]{sm-zw} and the proof of Theorem 
  \ref{thm:infinitelymany}
is similar to the proof of  \cite[Theorem 1.1]{SBeven}.  
%The main difference is that here we must appeal to Theorem \ref{thm:samepoles},
%while in \cite{sm-zw} the authors were able to directly use results about entire
 %functions
%with finite order of growth.
 We include the proofs for the convenience of the 
reader.

\vspace{2mm}

\noindent {\em Proof of Theorem \ref{thm:Hm}.}
Recall that $P_j=-\Delta +V_j$ for $j=1,\; 2$, and set $P_0=-\Delta$.
Let $-\mu_{l,j}^2,$ $l=1,...,L_j$ denote the non-positive eigenvalues
of $P_j$, repeated according to multiplicity.  For $t>0$, we recall 
(\ref{eq:b-k}),
\begin{equation}\label{eq:bkf}
\tr(e^{-tP_j}-e^{-tP_0})=
\frac{1}{2\pi i} \int_0^\infty
\tr\left( S_j^{-1}(\lambda)\frac{d}{d\lambda} S_j(\lambda)\right)e^{-t\lambda^2} d\lambda
+ \sum_{l=1}^{L_j} e^{t\mu_{l,j}^2} +\beta(V_j,d).
\end{equation}

% \cite{CdV,gu} and references therein.
%Here $\beta(V_j,d)$ can  be nonzero only if 
%$d=2$ or $d=4$.   By assuming that Hypothesis \ref{hy:H1} holds if $d=4$, we will guarantee it  is zero
%in this case.
%$0$ is a resonance
%of $P_j$.  This cannot happen if $d\geq 6$ and we have assumed it is not
%true if $d=4$. 
 %If $d=2$ and $0$ is a resonance of $P_j$, then 
%the exact behavior of $\beta(V_j,2)$ appears subtle; see
%\cite{b-g-d} for some indications.  However, the precise value of $\beta(V_j,2)$ 
%is not important for us here; what is important is that it appears
%in a term which is independent of
%$t$.

%Since $$\tr \left( S_j^{-1}(\lambda)\frac{d}{d\lambda} S_j(\lambda)\right)= 
%\frac{\frac{d}{d\lambda}\det S_j(\lambda)}{\det S_j(\lambda)}$$
%and 
Using that 
$\det S_1(\lambda)=\det S_2(\lambda)$ by Theorem \ref{thm:samepoles},
and (\ref{eq:bkf})
\begin{equation}\label{eq:tr1}
\tr( e^{-tP_1}-e^{-tP_2}) =
%\frac{i c_d (d-2)  }{2 \pi i}\left( \int[V_1(x)-V_2(x)]dx\right) \int_0^\infty e^{-t\lambda^2} \lambda^{d-3}d\lambda +
 \sum_{l=1}^{L_1} e^{t\mu_{l,1}^2}-\sum_{l=1}^{L_2} e^{t\mu_{l,2}^2} 
+ \beta(V_1,d)-\beta(V_2,d).
\end{equation}
%If $d\geq 4$ is even,
%\begin{equation}\label{eq:int1}
%\int _0^{\infty} e^{-t\lambda^2} \lambda^{d-3}d\lambda
%= \frac{1}{2} \int_0^\infty e^{-s t} s^{(d-4)/2} ds = \frac{1}{2} ((d-2)/2)! \; t^{-(d-2)/2}.
%\end{equation}
In particular,  by (\ref{eq:tr1}) 
\begin{equation}
\label{eq:tr2} \tr( e^{-tP_1}-e^{-tP_2})= t^{-(d-2)/2}f(t)\; \text{
with $f(t)\in C^{\infty}([0,\infty))$.}
\end{equation}
Note that  the parity of $d$ is important here.

By \cite[Theorem 4]{sm-zw} and our assumption that $V_2\in H^k$, there
are constants $c_1,c_2,...,c_{m+1}$ so that
$$\tr(e^{-tP_2}-e^{-tP_0})= (4 \pi t)^{-d/2}(c_1t+c_2t^2+...+c_{k+1}t^{k+1} +r_{k+2}(t)t^{k+2})\; \text{when $t\downarrow 0$}$$
with $|r_{k+2}(t)|\leq C$ for $0\leq t \leq 1$. 
But combining this with  (\ref{eq:tr2}) 
  implies that that there are constants $\tilde{c}_1,\tilde{c}_2,...,\tilde{c}_{k+1}$ and 
a function $\tilde{r}_{k+2}(t)$ so that
$$\tr(e^{-tP_1}-e^{-tP_0})= (4 \pi t)^{-d/2}(\tilde{c}_1t+\tilde{c}_2t^2+...+
\tilde{c}_{k+1}t^{k+1} + 
\tilde{r}_{k+2}(t)t^{k+2})\; \text{when $t\downarrow 0$}$$
with $|\tilde{r}_{l+2}(t)|\leq C$ for $0\leq t \leq 1$. 
Hence, again by \cite[Theorem 4]{sm-zw}, $V_1\in H^k(\Real^d)$.
\qed

\vspace{2mm}

A natural question to ask is the following: suppose $P_1$ and $P_2$ satisfy
the conditions of Theorem \ref{thm:samepoles}.
Theorem \ref{thm:samepoles} implies that  $\det S_1$ and
$\det S_2$ have exactly the same zeros and poles on $\Lambda$, 
including multiplicity.
It is natural, then, to ask if   $P_1$ and $P_2$ have the same 
resolvent resonances away from $0$.  
While it seems likely that they do, it is possible
to describe a scenario in which the symmetric difference of their 
(resolvent) resonance sets, where elements are repeated with 
multiplicity, is infinite.  This sort of scenario is the setting
of Lemma \ref{l:multiplicities}, which is used in the proof of Theorem
\ref{thm:infinitelymany}.

\vspace{2mm}
\noindent
{\em Proof of Theorem \ref{thm:infinitelymany}.}  The proof is 
by contradiction. Suppose that for some nontrivial potential $V$ and some $\epsilon >0$
we have $$\lim \sup_{r\rightarrow \infty}
\frac{N(r)}{(\log r)^{1-\epsilon}}<\infty.$$   If $d=4$, suppose
in addition that Hypothesis \ref{hy:H1} holds.
%in addition that $0$ is not a (resolvent) resonance.  
Let $S$ denote the scattering matrix of $-\Delta +V$.  Since the poles of 
$\det S(\lambda) $ are a subset of the poles of the resolvent, 
by   Theorem 
\ref{thm:samepoles} with $V=V_1$ and
$V_2\equiv 0$ $\det S(\lambda)$ has no poles.   Then we can apply 
 Theorem \ref{thm:Hm} with $V_1=V$ and $V_2\equiv 0$ to see that
 $V\in H^m(\Real^d)$ for all $m\in \Natural$.  Hence $V\in C_c^{\infty}(\Real^d;\Real)$.

Now we essentially follow \cite{SBeven} to show that there must be infinitely
many poles of the cut-off resolvent of $-\Delta+V$.    We fill in a few details omitted
in \cite{SBeven}.
By our Theorem \ref{thm:samepoles}
 (see also the proof of \cite[Theorem 1.1]{SBeven}),
$$\det S(\lambda) = 1.
%\exp\left( ic_d \lambda^{d-2}\int_{\Real^d} V(x)\; dx\;  \right)
$$ 
%if $d\geq 4$, and $\det S_V(\lambda)=1$ if $d=2$.  
As in 
the proof of Theorem \ref{thm:Hm} we consider the heat trace 
$H(t)$ for $t>0$.   Using $\det S(\lambda)\equiv 1$ in 
(\ref{eq:b-k}), we find
\begin{equation}
\label{eq:heatcons}
H(t) =\tr \left( e^{-t(-\Delta +V)}- e^{t\Delta} \right) 
= 
%\frac{1}{2\pi i} \int_0^\infty e^{-t \lambda^2}
% \tr \left(S_V(\lambda)^{-1} \frac{d}{d\lambda} S_V(\lambda)\right) d\lambda +
 \sum _{k=1}^Ke^{t\mu_k^2} +\beta(V,d),\; t>0
\end{equation}
where $-\mu_1^2,...,-\mu_K^2$ are the non-positive eigenvalues of $-\Delta +V$.

%Hence 
%\begin{equation} \label{eq:heatcons}
%H(t)=   \sum _{k=1}^Ke^{t\mu_k^2} + \beta(V,d),\; t>0.
%\end{equation}
%with $\tilde{c}_d$ a new constant, depending on the dimension.  When $d=2$,
%$\tilde{c}_2=0$, but $\tilde{c}_d\not = 0$ if $d>2$.  
It is  well-known that
\begin{equation}\label{eq:heatwellknown}
H(t) \sim \sum _{j=1}^\infty C_j(V) t^{j-d/2},\; t\downarrow 0.
\end{equation}
For us it is important to note that 
\begin{equation}\label{eq:Cs}
C_2(V) = \alpha_{2,d}\int V^2(x)dx
%,\; \text{ and }\; C_3(V) = 
%\alpha_{3,d}\int \left[ V^3(x) +|\nabla V(x)|^2\right] dx
\end{equation} with nonzero 
constant $\alpha_{2,d}$; see e.g. \cite{b-sb,CdV} and references therein.
If $d>4$, using that $C_2(V)\not =0$ 
 we have immediately a contradiction between
(\ref{eq:heatcons}) and (\ref{eq:heatwellknown}), showing 
that $\det S(\lambda)$ must have, by Theorem \ref{thm:samepoles},
infinitely many poles.  In fact, we 
must have 
$$\lim\sup N(r)/(\log r)^{1-\epsilon}= \infty \; 
\text{ for all $\epsilon >0$,}$$
since otherwise the hypotheses of Theorem \ref{thm:samepoles} hold.  Since the 
poles of $\det S(\lambda)$ are a (perhaps proper) subset of the 
poles of the resolvent by \cite{ch-hi4}, we prove the theorem when $d>4$.
Note that in this case we have actually proved a potentially stronger result than
claimed in the statement of the theorem, as we have proved a lower bound on the 
counting function for the poles of the determinant of the scattering matrix.

Next we shall show that if $d=2$ or $d=4$ and Hypothesis \ref{hy:H1} holds,
 $\det S$ has no poles, and $V\not \equiv 0$, then $-\Delta +V$ must have 
at least one strictly negative eigenvalue.  For $d=2$, this follows
immediately again by comparing the expansions (\ref{eq:heatcons}) 
and (\ref{eq:heatwellknown}), and noting that $C_2(V)$, in the $d=2$
case the coefficient of $t$ in the 
expansion of $H(t)$ at $t=0$, is nonzero.  Hence there must be at least one 
negative eigenvalue.  

Now we turn to the case of $d=4$.  In \cite{b-y}, Benguria and Yarur showed that in dimension $d=3$, if $W\in L^\infty(\Real^3;\Real)$ goes to $0$
sufficiently rapidly
at infinity, then $0$ cannot be 
an eigenvalue of $-\Delta +W$ if $-\Delta +W$ does not have
a negative eigenvalue.  With a modification to their proof, in particular, 
a change to \cite[Theorem 2]{b-y} using the function $g(r) =r^{-2}$, one
can show the same is true in dimension $d=4$,
at least if the potential $W$ has
compact support.  Using this 
and returning to the case at hand, if $V\in C_c^\infty(\Real^4;\Real)$,
$V\not \equiv 0$, has $\det S$
analytic, since $C_2(V)\not =0$ by comparing (\ref{eq:heatcons})
and (\ref{eq:heatwellknown}) (recalling $\beta(V,4)=0$ by assumption)
 $-\Delta +V$ must have at least
one eigenvalue, and hence at least one negative eigenvalue.
% As before, note by 
%comparing (\ref{eq:heatcons}) 
%and (\ref{eq:heatwellknown}) that if $d=4$ and $C_3(V)\not =0$,
%then $-\Delta+V$ must have at least one
%negative eigenvalue.  Suppose then that $C_3(V)=0 $
%and that  there are no (strictly) negative
%eigenvalues of $-\Delta +V$.  Again using that $C_2(V)\not =0$, we
%see that $0$ must be an eigenvalue of $-\Delta +V$.  Since $-\Delta +V$
%has no negative eigenvalues, $0$ is the bottom of the spectrum of 
%$-\Delta+V$, and hence $0$ is a simple eigenvalue, and there is a 
%corresponding $L^2$ eigenfunction $\psi$ of fixed sign.  But 
%then by the minimax principle, $\psi$ minimizes, over all functions
%$g$ in  $H^1(\Real^n) $ 
%with $\| g\|_{L^2}=1$, $\int [ |\nabla g |^2 +  V|g|^2 ] dx$.  
%Then recalling the expression for $C_3(V)$ from (\ref{eq:Cs})
%and our assumption that $C_3(V)=0$, we see that  $V/|\ V\|$ minimizes this
%same expression.  Hence by the simplicity of $0$ as an 
%eigenvalue, $\psi$ is a constant multiple of $V$.
% But by the unique continuation theorem $\psi$ does
%not have compact support, and we have assumed that $V$ does.  Hence 
%we have a contradiction, and $-\Delta +V$ must have at least one 
%negative eigenvalue.

Now in the $d=2$ and $d=4$ cases, we have shown that $-\Delta +V$ must
have at least one negative eigenvalue if $\det S(\lambda)$
has no poles and $V\not \equiv 0$.  Thus  by Corollary
\ref{c:negativeeigenvalues}  the cut-off resolvent of $-\Delta +V$
has infinitely many poles.  Moreover, the explicit 
location of the poles
shows that $\lim \sup_{r\rightarrow \infty}
\frac{N(r)}{(\log r)^{1-\epsilon}}=\infty$.
\qed

\vspace{2mm}

\noindent {\em Proof of Theorem \ref{thm:heatcoefficients}.}
Under the hypotheses of Theorem \ref{thm:heatcoefficients},
(\ref{eq:tr1}) holds.   However, we have also assumed that the negative
eigenvalues of $P_1$ and $P_2$ agree, as do their multiplicities.  Hence
for $t>0$
$$\tr(e^{-tP_1}-e^{-tP_2}) = {\mathfrak n}_0(P_1)+\beta(V_1,d)-
\mathfrak{n}_0(P_2)-\beta(V_2,d)$$
where ${\mathfrak n}_0(P_j)$ is the dimension of the $L^2$ null space of $P_j$.
However, we have defined $\mathfrak{n}_0(P_j)+\beta(V_j,d)$ to be the 
multiplicity of $0$ as a resonance of $P_j$, so that
$\tr(e^{-tP_1}-e^{-tP_2})=0$.  Therefore
\begin{align*}
0= \tr(e^{-tP_1}-e^{-tP_2}) 
\sim \sum_{l=1}^\infty (C_l(V_1)-C_l(V_2)) t^{l-d/2},\; \text {as $t\downarrow 0.$}
\end{align*}
This proves the result immediately.
\qed
\vspace{3mm}

To prove Theorem \ref{thm:inverse}, we shall use an intermediary step of 
considering Schr\"odinger operators on a flat torus $M$.  Given 
$R_0>0$, we shall define a corresponding flat torus $M=M(R_0)$ by identifying
opposite sides of $\{ x\in \Real^d: \max|x_j|\leq R_0+1\}$. Henceforth
we omit the $R_0$ dependence of $M$, as we shall hold $R_0$ fixed. If 
$V\in C^\infty_c(\Real^d;\Real)$ has its support in $B(0,R_0)$, then
we can consider
$V$ as an element of $  C^\infty_c(M;\Real)$ in a natural way. Thinking of 
$V\in C^\infty(M;\Real)$ gives a 
corresponding Schr\"odinger operator $P_{V,M}=-\Delta_M+V$ acting on 
(a domain in) $L^2(M)$, where $\Delta_M\leq 0$ is the Laplacian on the flat 
torus $M$.  Then it is well known that 
\begin{equation}
\tr_M e^{-tP_{V,M}}\sim t^{-d/2}\sum_{l=0}^\infty C_{l,M}(V)t^{-l} \; 
\text{as $t\downarrow 0$}
\end{equation}
where  $\tr_M$ denotes the trace on  $L^2(M)$.

Recall we  denote the heat coefficients of $V$ on $\Real^d$ by 
$C_l(V)$; see (\ref{eq:heatwellknown}).
\begin{lemma}\label{l:heatcoeff}
Suppose $V_1,\; V_2\in C_c^\infty(\Real^d;\Real)$ are supported in $B(0,R_0)$.
Then $C_l(V_1)=C_l(V_2)$ for all $l\in \Natural $ if and only if
$C_{l,M}(V_1)=C_{l,M}(V_2)$ for all $l\in \Natural$.
\end{lemma}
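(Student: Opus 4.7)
The plan is to exploit the local nature of the heat kernel short-time expansion: both $C_l(V)$ on $\Real^d$ and $C_{l,M}(V)$ on the flat torus $M$ are obtained by integrating the same universal polynomials in $V$ and its derivatives, so compact support of $V_j$ inside $B(0,R_0)\subset M$ forces the two families of coefficients to agree term by term (up to the trivial $l=0$ constant on $M$).

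First I would invoke the classical pointwise Minakshisundaram--Pleijel / Seeley--DeWitt expansion. On $\Real^d$, for $V\in C_c^{\infty}(\Real^d;\Real)$, the diagonal of the heat kernel of $-\Delta+V$ satisfies, as $t\downarrow 0$,
$$e^{-t(-\Delta+V)}(x,x)-e^{t\Delta}(x,x)\sim t^{-d/2}\sum_{l=1}^{\infty}u_l(V)(x)\,t^l,$$
uniformly for $x$ in compact sets, where each $u_l(V)(x)$ is a universal polynomial in the values at $x$ of $V$ and finitely many of its derivatives, with $u_l(0)\equiv 0$ for $l\ge 1$. On the flat torus $M$ the analogous expansion
$$e^{-tP_{V,M}}(x,x)\sim t^{-d/2}\sum_{l=0}^{\infty}u^M_l(V)(x)\,t^l$$
holds, with $u^M_0=(4\pi)^{-d/2}$ and, crucially, $u^M_l(V)(x)=u_l(V)(x)$ for $l\ge 1$. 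This last identification is the only substantive point: it follows because the Hadamard parametrix construction that produces the $u_l$ is purely local, and on a flat manifold all curvature contributions vanish, so the universal polynomials are the same on $M$ as on $\Real^d$ under the natural identification of $B(0,R_0)$ with its image in $M$.

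Next I would integrate termwise to identify the coefficients of the trace expansions. Comparing with the expansions of $H(t)$ and $\tr_M e^{-tP_{V,M}}$ gives
$$C_l(V)=\int_{\Real^d}u_l(V)(x)\,dx,\qquad C_{l,M}(V)=\int_{M}u_l(V)(x)\,dx\quad (l\ge 1),$$
while $C_{0,M}(V)=(4\pi)^{-d/2}\vol(M)$ is independent of $V$. Since every monomial in $u_l(V_j)$ for $l\ge 1$ contains at least one factor of $V_j$ or a derivative thereof, $u_l(V_j)(x)=0$ for $x\notin B(0,R_0)$, and therefore
$$C_{l,M}(V_j)=\int_{B(0,R_0)}u_l(V_j)(x)\,dx=C_l(V_j)\qquad(l\ge 1).$$

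The equivalence claimed in the lemma is then immediate: the $l=0$ coefficient on $M$ carries no information about $V_j$, and for $l\ge 1$ the two coefficients coincide potential by potential. I expect the only thing demanding genuine care is the identification $u^M_l=u_l$ on the flat torus, but this is a standard consequence of the local parametrix construction (see, e.g., Gilkey); no more than checking that curvature terms do not enter. Everything else is bookkeeping with compact supports.
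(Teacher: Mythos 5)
Your proposal is correct, but it takes a genuinely different route from the paper. The paper works at the level of heat \emph{traces} rather than heat \emph{kernels}: it characterizes $C_l(V_1)=C_l(V_2)$ for all $l$ as the statement $\tr(e^{-tP_1}-e^{-tP_2})=O(t^N)$ for all $N$, inserts a spatial cutoff $\chi$ supported in $B(0,R_0+1/2)$, uses expressions from \cite[Section~3]{sm-zw} to show replacing $\tr$ by $\tr(\chi\,\cdot\,\chi)$ changes the trace by $O(t^\infty)$, observes $\tr(\chi e^{-tP_j}\chi)=\tr_M(\chi e^{-tP_j}\chi)$ because $\chi$ and the metric are supported/flat in the fundamental domain, and then applies \cite[Lemma~1.5]{smi} twice to pass to $\tr_M(e^{-tP_{j,M}})$. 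Your argument instead invokes the classical pointwise Minakshisundaram--Pleijel expansion with universal local coefficients $u_l(V)$, identifies $u_l^M=u_l$ on the flat torus (no curvature), and integrates termwise; this actually proves the slightly sharper identity $C_l(V)=C_{l,M}(V)$ for $l\geq 1$, from which the biconditional is immediate. Both approaches are sound. What the paper's route buys is that it manipulates only operator traces and a comparison lemma, which is uniform with the techniques used elsewhere in the paper and does not require citing the full local parametrix machinery; what your route buys is a cleaner conceptual picture and the stronger conclusion $C_l=C_{l,M}$. The one step you wave at (``integrate termwise'') does require a tail estimate---that $e^{-t(-\Delta+V)}(x,x)-e^{t\Delta}(x,x)$ is $O(t^\infty)$ locally uniformly away from $\supp V$, with integrable decay in $x$---which is standard but worth naming explicitly, since the pointwise expansion by itself is only locally uniform while the trace is a global quantity.
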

\begin{proof}
Let us denote by $P_1=-\Delta+V_1$ and $P_2=-\Delta+V_2$ the Schr\"odinger 
operators on $\Real^d$, and by $P_{1,M}=-\Delta_M+V_1$ and $P_{2,M}=-\Delta_M+V_2$ the Schr\"odinger operators on $M$.

Note that $C_l(V_1)=C_l(V_2)$ for all $l\in \Natural $ if and only 
if $\tr(e^{-tP_1}-e^{-tP_2})=O(t^N)$ for all $N$ as $t\downarrow \infty$.  Let
$\chi \in C_c^\infty(\Real^d)$ have its support in $B(0,R_0+1/2)$ and be equal to 
$1$ on $B(0,R_0+1/4)$. Then one can see, for 
example from the expressions for
the heat trace of \cite[Section 3]{sm-zw},  that
$$\tr(e^{-tP_1}-e^{-tP_2})- \tr(\chi e^{-tP_1}\chi -\chi e^{-tP_2}\chi)=O(t^N)
\; \text{as $t\downarrow 0$}$$
 for all $N\in \Natural$.
Note that 
$$\tr(\chi e^{-tP_1}\chi -\chi e^{-tP_2}\chi)
=\tr_M(\chi e^{-tP_1}\chi -\chi e^{-tP_2}\chi)$$
since $\chi$ is supported in $B(0, R_0 +1/2)$, and $M$ is
equipped with the flat metric.
But then using \cite[Lemma 1.5]{smi}, we see that
$$ \tr_M(\chi e^{-tP_1}\chi -\chi e^{-tP_2}\chi) 
- \tr_M(\chi e^{-tP_{1,M}}\chi -\chi e^{-tP_{2,M}}\chi) =O(t^N)\;\text{as 
$t\downarrow 0$}$$
for all $N\in \Natural$, and by a second application of \cite[Lemma 1.5]{smi}
 that
$$\tr_M(\chi e^{-tP_{1,M}}\chi -\chi e^{-tP_{2,M}}\chi)
- \tr_M( e^{-tP_{1,M}} - e^{-tP_{2,M}}) =O(t^N)\; \text{as $t\downarrow 0$}$$
for all $N\in \Natural$.  But 
$\tr_M( e^{-tP_{1,M}} - e^{-tP_{2,M}}) =O(t^N)\; \text{as $t\downarrow 0$}$ for all $N\in \Natural$
if and only if $C_{l,M}(V_1)=C_{l,M}(V_2)$ for all $l\in \Natural$.
\end{proof}

We shall need another lemma for our proof of Theorem \ref{thm:inverse}.
\begin{lemma}\label{l:isresaway0} For $n\in \Natural$, let 
$V_n\in L^\infty_c(\Real^d;\Real) $, with 
$\supp V_n\subset B(R_0,0)$, and suppose $\mu_{R_{V_n}}(\lambda_0)= 
\mu_{R_{V_m}}(\lambda_0)$ for all $\lambda_0\in \Lambda$ and all $n,\;m\in 
\Natural$.  Then if $V_n\rightarrow V_*$ in $L^\infty(\Real)$, then
$\mu_{R_{V_n}}(\lambda_0)= \mu_{R_{V_*}}(\lambda_0)$ 
for all $\lambda_0\in \Lambda$.
\end{lemma}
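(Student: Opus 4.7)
The plan is to reformulate $\mu_{R_V}(\lambda_0)$ in terms of the order of vanishing of a scalar analytic function depending continuously on $V\in L^\infty$, and then apply Hurwitz's theorem. Fix $\chi\in C_c^\infty(\Real^d)$ equal to $1$ on $\overline{B(0,R_0)}$, so that $\chi V_n = V_n$ for every $n$ and, by $L^\infty$ convergence of the $V_n$ with supports in a fixed compact set, also $\chi V_* = V_*$. For each such potential $V$, set $K_V(\lambda) = V R_0(\lambda) \chi$ on $L^2(\Real^d)$; this is a holomorphic family of compact operators for $\lambda \in \Lambda$.

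First I would establish the Schatten-class convergence $K_{V_n}(\lambda)\to K_{V_*}(\lambda)$ in $\mathcal{S}_p$ for some $p > d/2$, locally uniformly in $\lambda \in \Lambda$. This follows from the factorization $K_{V_n} - K_{V_*} = (V_n - V_*) R_0(\lambda) \chi$ together with the standard bound $\|R_0(\lambda)\chi\|_{\mathcal{S}_p} \leq C(\lambda)$ locally uniform on $\Lambda$; compact subsets of $\Lambda$ are automatically bounded away from $\lambda=0$, where $R_0$ has its only singularity in even dimensions.

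Next I would introduce the regularized Fredholm determinant $D_V(\lambda) = \det_{\lceil p\rceil+1}(I + K_V(\lambda))$, a scalar function analytic on $\Lambda$. By standard Gohberg--Sigal theory, its order of vanishing at $\lambda_0$ equals the algebraic multiplicity of $\lambda_0$ as a characteristic value of the pencil $I + K_V(\cdot)$; combined with the identity $\chi R_V(\lambda)\chi = \chi R_0(\lambda)(I+K_V(\lambda))^{-1}\chi$, this order coincides with the rank of the residue of $\chi R_V(\lambda)\chi$ at $\lambda_0$, which (for our choice of $\chi$) agrees with $\mu_{R_V}(\lambda_0)$ from (\ref{eq:multresolvent}). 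Schatten convergence then yields $D_{V_n}\to D_{V_*}$ uniformly on compact subsets of $\Lambda$, and $D_{V_*}\not\equiv 0$ because $D_V(\lambda)\to 1$ as $\Im\lambda\to +\infty$ on the physical sheet (where $\|K_V(\lambda)\|\to 0$).

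Finally I apply Hurwitz's theorem. Let $Z\subset \Lambda$ denote the common zero set of the $D_{V_n}$ with common multiplicities $m(\lambda_0)$. For any $\lambda_0\in Z$, pick a small disk about $\lambda_0$ meeting $Z$ only at $\lambda_0$; uniform convergence on its boundary (where no $D_{V_n}$ vanishes) plus the argument principle yields exactly $m(\lambda_0)$ zeros of $D_{V_*}$ inside, which must concentrate at $\lambda_0$ as the radius shrinks. Conversely, about any $\lambda\notin Z$ a small zero-free disk for every $D_{V_n}$ is zero-free for $D_{V_*}$ by Hurwitz. This forces $\mu_{R_{V_*}}(\lambda_0) = \mu_{R_{V_n}}(\lambda_0)$ for every $\lambda_0 \in \Lambda$. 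The main technical hurdle is the identification of the order of vanishing of $D_V$ with the contour-integral multiplicity $\mu_{R_V}$ of (\ref{eq:multresolvent}); once the Gohberg--Sigal framework provides this, the rest is straightforward complex-analytic continuity.
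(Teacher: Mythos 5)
Your argument is correct and follows essentially the same route as the paper's: both define the scalar analytic function $\det_p(I+VR_0(\lambda)\chi)$, invoke the identity (from Gohberg--Sigal theory, cited in the paper to Dyatlov--Zworski) equating its order of vanishing with $\mu_{R_V}(\lambda_0)$, and conclude by locally uniform convergence plus Hurwitz's theorem, using that the zero set of $D_{V_n}$ is fixed so circles can be chosen to isolate $\lambda_0$. Your added remark that $D_{V_*}\not\equiv 0$ because $D_V\to 1$ deep in the physical half-plane makes explicit a point the paper leaves implicit, but the underlying argument is the same.
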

\begin{proof}
This proof is the same as in the odd-dimensional case.  We recall a proof as in \cite{hi-wo} 
for the convenience of the reader.

Since $V_n\rightarrow V_*$, $\supp V\subset \overline{B}(R_0,0)$. 
Let $\chi \in C_c^\infty(\Real^d)$ 
be $1$ on $\overline{B}(R_0,0)$.
For $\lambda_0\in \Lambda$, it is well known (cf. \cite[Section 3.4]{dy-zw}) that 
\begin{equation}\label{eq:multeq}
\mu_{R_{V_*}}(\lambda_0)=\msca(\det_p(I+V_*R_0(\lambda)\chi); \lambda_0),
\end{equation} where $p\in \Natural$,
$p>d/2$ is fixed and $\det_p$ is the  regularized determinant defined for 
operators of the type $I+B$, with $B$ in the $p$-Schatten class.  

Fix $p>d/2$, $p\in \Natural$, and set $h_n(\lambda)=  \det_p(I+V_nR_0(\lambda)\chi)$ and $h_*(\lambda)
= \det_p(I+V_*R_0(\lambda)\chi)$.  Then $h_n\rightarrow h_*$, uniformly on compact sets of
$\Lambda$.  Now we locally identify a neighborhood of $\lambda_0\in \Lambda$ with an open set in the
complex plane.
Given $\lambda_0$ in $\Lambda$, choose a small circle $\gamma_{\lambda_0}$ in $\Lambda$ so 
that no zeros of $h_*$ or of $h_n$ lie on $\gamma_{\lambda_0}$. We may in 
addition ensure that there are no zeros of  $h_n$ inside $\gamma_{\lambda_0}$, 
except, possibly, at $\lambda_0$.   This is possible since 
the zeros of both $h_*$ and $h_n$ are isolated, and the zeros of $h_n$ are independent of $n$.
Using Hurwitz's Theorem there is an $N\in \Natural$ so that if $n>N$, $h_n$ and $h_*$ have
the same number of zeros, counted with multiplicity, inside $\gamma_{\lambda_0}$.  By choosing $\gamma_{\lambda_0}$ appropriately, this
shows that if $\msca(h_n;\lambda_0)=0,$ then $\msca(h_*;\lambda_0)=0.$
Applying this again for other values of $\lambda_0$ shows that in general
$\msca(h_n;\lambda_0)=\msca(h_*;\lambda_0)$.
%By our choice of $\gamma_{\lambda_0}$, this means that for $n>N$, $\msca(h_n;\lambda_0)=
%\msca(h_*,\lambda_0)$, hence $\mu_{R_V{_*}}(\lambda_0)= \mu_{R_{V_n}}(\lambda_0)$.
% If the circle
%$\gamma_{lambda_ 0}$ is chosen to have sufficiently small radius, then 
%this is equal to $\mu_{R_V_*}(\lambda_0)$ by (\ref{eq:multeq}). 
 %For $\lambda_0\in \Lambda$, let $\gamma_{\lambda_0}$ be a 
%small circle in $\Lambda$ centered at $\lambda_0$ and such that $R_{V_*}(\lambda)$ has 
%no poles on $\gamma_{\lambda_0}$ and no poles inside $\gamma_{\lambda_0}$, except possibly
%at $\lambda_0$.  
% Then by definition and unique continuation 
%$$\mu_{R_{V_*}}(\lambda_0) =\rank
%\int_{\gamma_{\lambda_0}}  R_{V_*}(\lambda)  d\lambda=  \rank
%\int_{\gamma_{\lambda_0}} \chi R_{V_*}(\lambda) \chi d\lambda.$$
%Of course, the same is true with $V_*$ replaced by $V_n$ in all occurrences.
%Since $V_n\rightarrow V_*$ in $L^\infty_0$, 
%$\|\chi R_{V_n}(\lambda) \chi - \chi R_{V_*}(\lambda) \chi\| \rightarrow 0$ for 
%any $\lambda$ for which $\chi R_{V_*}(\lambda)\chi $ is a bounded operator.
%But this means
%$$  \rank
%\int_{\gamma_{\lambda_0}} \chi R_{V_n}(\lambda) \chi d\lambda
%\rightarrow  \rank
%\int_{\gamma_{\lambda_0}} \chi R_{V_*}(\lambda) \chi d\lambda = \mu_{R_{V_*}}(\lambda_0).$$
%No it doesn't!
\end{proof}

\noindent {\em Proof of Theorem \ref{thm:inverse}.}
By Theorem \ref{thm:heatcoefficients}, if $V\in Iso(V_0,R_0)$, then $C_l(V)=C_l(V_0)$ for all 
$l\in \Natural$.  By Lemma \ref{l:heatcoeff}, $C_{l,M}(V)=C_{l,M}(V_0)$ for
all $l\in \Natural$.

We use results of Br\"uning \cite{bruning}
and Donnelly \cite{donnelly}.  Although the results of \cite{bruning,donnelly}
are stated as results for isospectral Schr\"odinger operators on compact 
manifolds, a careful reading shows that the proofs of the results
therein use the isospectrality of the Schr\"odinger operators only to 
show  
 that the  operators have the same heat coefficients,
and not any additional 
properties of isospectral Schr\"odinger operators. 
Suppose $\{V_n\}\subset Iso(V_0,R_0)$ if $d=2$ (respectively,
$\{V_n\}\subset Iso(V_0,R_0,s,c_0)$ if $d\geq 4$).
Hence, from  the compactness results 
of \cite{bruning, donnelly}, since the $V_n$ have the same 
heat coefficients on $M$,  $\{ V_n\}$ has a subsequence
which converges in $C^\infty(M)$ to a function $V_*$, necessarily in 
$C^\infty(M)$.  By selecting a subsequence and 
relabeling if necessary, we can assume $V_n\rightarrow V_*.$
Because $V_n$ has its support in $B(R_0,0)$ for each
$n$, so does $V_*$.
Since the heat coefficients $C_l(V)$ are 
continuous functions of $V$ and $C_l(V_n)=C_l(V_0)$ for each $l,\;n\in \Natural$, 
$C_l(V_*)=C_l(V_0)$ for each $l\in \Natural$.
 If $d\geq 4$,  $\|V_*\|_{H^s}\leq c_0$,
since this holds for each $V_n$.  It remains only to show that 
$-\Delta+V_*$ is isoresonant with $-\Delta +V_0$. 

By Lemma \ref{l:isresaway0},  $V_*$ and $V_0$ have the same nonzero resonances with 
the same multiplicities.  Recall that this means that they have the same negative eigenvalues
(if any) with the same multiplicities.  Using (\ref{eq:b-k}) and Theorem
\ref{thm:samepoles},  for $t>0$
$$\tr(e^{t(\Delta -V_*)}-e^{t(\Delta-V_0)})= 
\fn_0(-\Delta+V_*)+\beta(V_*,d)-\fn_0(-\Delta+V_0)-
\beta(V_0,d)$$
where $\fn_0(-\Delta+V_*)$ is the dimension of the $L^2$
null space of $-\Delta +V_*$, and similarly for $V_*$ replaced by $V_0$.
However, since $-\Delta +V_0$ and $-\Delta+V_*$ have the same heat coefficients,
for any $N\in \Natural $ there
is a $C>0$ so that $|\tr(e^{t(\Delta -V_*}-e^{t(\Delta-V_0)})|\leq C t^N$ for $0<t<1$.
Hence $$\fn_0(-\Delta+V_*)+\beta(V_*,d)=\fn_0(-\Delta+V_0)+
\beta(V_0,d).$$
But we have chosen the left (respectively right) hand side
 to be the definition of the multiplicity of $0$ as 
a resonance of $-\Delta +V_*$ (resp. $-\Delta +V_0$), completing the proof.
\qed

\vspace{2mm}

%\section{Proofs of Corollary \ref{cor:infinitediff}}

\noindent {\em Proof of Corollary \ref{cor:infinitediff}.}  
Suppose to the the contrary that there are potentials $V_1$, 
$V_2\in L^\infty_c(\Real^d;\Real)$ so that the nonzero poles of the 
meromorphically continued resolvents $R_{V_j}(\lambda)$, including 
multiplicities, differ by a nonzero finite number of elements.  Then
since the poles of determinants of the associated scattering matrices 
$S_j(\lambda)$ are a subset of the poles of $R_{V_j}$, the hypotheses 
of Theorem \ref{thm:samepoles} are fulfilled.  Hence by
Theorem \ref{thm:samepoles}, $\det S_1(\lambda)/\det S_2(\lambda)\equiv1$.

Now we use the notation of Lemma \ref{l:multiplicities}.
Suppose for some $\tau_0>0$, $\theta_0\in \Real$, 
$$\mu_{R_1}(\tau_0 e^{i\theta_0})-\mu_{R_2}(\tau_0 e^{i\theta_0})=m_0\not =0.$$
This must hold for some $\tau_0>0$, $\theta_0\in \Real$ if the set of 
(nonzero) poles of the resolvents of $P_1$ and $P_2$ are 
not identical.
But then by Lemma \ref{l:multiplicities}, 
$$\mu_{R_1}(\tau_0 e^{i(\theta_0+k\pi)})-\mu_{R_2}(\tau_0 e^{i(\theta_0+k\pi)})=m_0\not =0\; \text{for all $k\in \Integers$}.$$
But this contradicts the assumption that the difference of
the number of poles of the 
resolvents, counted with multiplicity, is a finite number.
\qed

\section{The singular part of $R_V(\lambda)$ at $\lambda_0 e^{i \pi k}$
and linear independence}
\label{s:independence} 
In this section we denote by $\pr$ the natural projection,
$\pr: \Lambda \rightarrow \Complex$, which identifies points whose arguments
differ by an integral multiple of $2\pi$.

Suppose $R_V(\lambda)$ has poles at $\lambda _1, \lambda_2\in \Lambda$,
with, for $j=1,2$ nontrivial $f_j$ in the range of the singular part of $R_V(\lambda)$ 
at $\lambda_j$, with $(-\Delta +V-(\pr(\lambda_j))^2)f_j=0$.
It is easy to see from the condition that $f_j$ is in the null space
of $-\Delta +V-(\pr(\lambda_j))^2$ that if $(\pr(\lambda_1))^2\not = 
(\pr(\lambda_2))^2 $
and $f_1$ and $f_2$ are both 
nontrivial, then they are linearly independent.  This argument
can easily be extended to $m$ distinct points $\{\lambda_1,...,\lambda_m\}$
in $\Lambda$,
as long as the set $\{ (\pr(\lambda_1))^2,..., (\pr(\lambda_m))^2\}$
consists of $m$ {\em distinct} points in the complex plane.
This argument
does not work, however, if some elements of the 
set $\{ (\pr(\lambda_1))^2,..., (\pr(\lambda_m))^2\}$
coincide.  Since
on $\Lambda$ there are infinitely many points which project to any  
 given point in the complex plane, the question of linear independence of the
ranges of the singular parts of the resolvent may be complicated.  
We do not attempt to fully answer this question here.  However, we 
do show that in some sense, made precise in Proposition 
\ref{p:eigenvalue} below, the ranges of the singular part of the resolvent for a set of 
distinct points in $\Lambda$ are ``usually'' linearly independent.

To state our results, we introduce a little notation. Suppose
$\lambda_1\in \Lambda$ is a pole of $R_V(\lambda)$ of order $l$.
Then we will say $f$ is in the range of the most singular part of 
$R_V(\lambda)$ at $\lambda_1$ if $f$ is in the range of 
$((\lambda-\lambda_1)^l R_V(\lambda))_{\restrict \lambda=\lambda_1}$.
Here we locally identify a neighborhood of $\lambda_1$ in $\Lambda$ with 
an open set in the complex plane so that $(\lambda -\lambda_1)^l$ makes
sense.

 %Let $\rho>0$, $0<\varphi<\pi$ and 
%set $\lambda_0=\rho e^{i\varphi}\in \Lambda$.
%Suppose  $R_V(\lambda)$ has  
%poles at $\lambda=e^{i\pi k}\lambda_0$ for a set $E\subset \Integers $ 
%consisting of at least two points.  Then for each $k\in E$, there
%is a nontrivial $f_k $ in the range of the singular part of 
%$R_V(\lambda)$ at $\lambda=\lambda_0 e^{i\pi k}$ so that $(-\Delta +V -
%\rho^2 e^{2i\varphi})f_k=0$.  (In fact, it is possible that there is more
%than one such linearly independent function.)  The question 
%arises--are these functions linearly independent?  If not, then 
%there is the possibility that in some sense the poles of the resolvent
%are being over-counted.  We do not attempt to answer this question in
%full generality, but provide partial results here.

In even dimension $d$, for any $l\in \Integers$ 
\begin{equation}R_0(e^{i\pi l} \lambda)= R_0(\lambda)-il T(\lambda)
\end{equation}
where  $$\left( T(\lambda)f \right) (x) =\frac{1}{2} (2\pi)^{1-d}\lambda^{d-2}
\int \int e^{i\lambda \omega \cdot (x-y)} f(y) dy d\sigma_\omega$$
where $d\sigma_\omega$ is the usual measure on $\Sphere^{d-1}$.

\begin{lemma} \label{l:nullspace} Let $d$ be even, 
$V\in L^\infty_c(\Real^d;\Real)$, and $m\in \Integers, $ $m\geq 2$.
Suppose for some $\lambda_0=\rho e^{i\varphi}$, $0\leq \varphi <\pi$, $\rho>0$,
there are {\em distinct} integers $k_1,\; k_2,..., k_m$ and not identically $0$ 
 functions $f_{k_l}$
in the range of the most singular part of $R_V$ at $\lambda_0 e^{i\pi k_l}$,
%$(-\Delta +V-\rho^2 e^{2i\pi})f_{k_l}=0$, 
so that
$\{ f_{k_1},\; f_{k_2},...,f_{k_l}\}$ are linearly dependent.
Then for each integer $p$, $R_V(\lambda)$ has a pole at $
\lambda=e^{ip \pi}\lambda_0$.
%Then there is a nonzero function $g$ so that $(I+VR_0(\lambda_0))g=0$.
\end{lemma}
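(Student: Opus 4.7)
The plan is to reduce to a minimal linearly dependent subset of $\{f_{k_1},\dots,f_{k_m}\}$, extract two independent linear relations among the associated ``radiation patterns'' $h_l:=T(\lambda_0) V f_{k_l}$, and then for every integer $p$ produce a nontrivial resonance state at sheet $p$ as a linear combination of the $f_{k_l}$'s. After relabeling we may assume $\{f_{k_l}\}_{l=1}^{m-1}$ are linearly independent and the unique-up-to-scale dependence $\sum_{l=1}^{m} c_l f_{k_l}=0$ has every $c_l\neq 0$; in particular $m\geq 2$. Each $f_{k_l}$ satisfies $(-\Delta+V-\lambda_0^2) f_{k_l}=0$, and the top Laurent coefficient of $R_V=R_0-R_0 V R_V$ at the pole $\lambda_0 e^{i\pi k_l}$ gives the integral equation $f_{k_l}=-R_0(\lambda_0 e^{i\pi k_l}) V f_{k_l}$. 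Using $R_0(e^{i\pi k}\lambda)=R_0(\lambda)-ikT(\lambda)$ from the excerpt, this reads
\[
f_{k_l}+R_0(\lambda_0) V f_{k_l}=i k_l h_l,\qquad h_l:=T(\lambda_0) V f_{k_l}.
\]

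For an integer $p$, I would test the ansatz $f=\sum_{l}c'_l f_{k_l}$: by the identity above,
\[
f+R_0(\lambda_0 e^{i\pi p}) V f=i\sum_{l}c'_l(k_l-p)h_l.
\]
Thus it suffices to find $(c'_l)$ with $\sum c'_l(k_l-p)h_l=0$ and $\sum c'_l f_{k_l}\neq 0$: the resulting $f\neq 0$ satisfies $f=-R_0(\lambda_0 e^{i\pi p}) V f$, which forces $\lambda_0 e^{i\pi p}$ to be a pole of $R_V$ in the standard way. The key input is a rank bound on the $h_l$: applying $T(\lambda_0)V$ to $\sum c_l f_{k_l}=0$ gives $\sum c_l h_l=0$, while summing the previous display with weights $c_l$ (and using $\sum c_l f_{k_l}=0$ on the left) yields $\sum c_l k_l h_l=0$. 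Since the $k_l$ are distinct and all $c_l$ are nonzero, $(c_l)$ and $(c_l k_l)$ are linearly independent in $\Complex^m$, so the $h_l$ satisfy at least two independent linear relations, giving $\dim\mathrm{span}\{h_l\}\leq m-2$.

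For $p\in\{k_1,\dots,k_m\}$ the claim is already part of the hypothesis. For $p\notin\{k_1,\dots,k_m\}$ each factor $k_l-p$ is nonzero, so the map $\Psi_p\colon (c'_l)\mapsto \sum c'_l(k_l-p)h_l$ differs from $(c'_l)\mapsto \sum c'_l h_l$ by a diagonal isomorphism, whence $\dim\ker\Psi_p=m-\dim\mathrm{span}\{h_l\}\geq 2$. Since the kernel of the ``$f$-map'' $(c'_l)\mapsto \sum c'_l f_{k_l}$ is exactly $1$-dimensional by minimality, I can choose $(c'_l)\in\ker\Psi_p$ lying outside this kernel, producing the desired nonzero resonance state at sheet $p$. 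The main step that needs care is the derivation of the integral equation $f_{k_l}=-R_0(\lambda_0 e^{i\pi k_l}) V f_{k_l}$ from the hypothesis that $f_{k_l}$ lies in the range of the most singular part of $R_V$ at $\lambda_0 e^{i\pi k_l}$; this follows from extracting the leading Laurent coefficient of $R_V=R_0-R_0 V R_V$ and using the compact support of $V$, after which the rest of the argument is elementary linear algebra on $\Complex^m$.
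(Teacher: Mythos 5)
Your argument is correct and follows the same backbone as the paper's: reduce to a minimal linearly dependent subset, exploit $R_0(e^{i\pi k}\lambda)=R_0(\lambda)-ikT(\lambda)$ to rewrite the resonance condition at each sheet $k_l$ as a fixed operator relation plus $k_l$ times $T(\lambda_0)$, and then exhibit a nontrivial element of the null space of $I+VR_0(e^{ip\pi}\lambda_0)$ for every integer $p\notin\{k_1,\dots,k_m\}$. The only genuine variation is in the last step: the paper works with $\phi_{k_l}=-Vf_{k_l}$ and writes down the explicit combination $g=\sum_{l\geq 2}c_l\,\frac{k_1-k_l}{k_l-p}\,\phi_{k_l}$, verifying directly that $(I+VR_0(\lambda_0)-ipVT(\lambda_0))g=0$, whereas you derive the two independent relations $\sum_l c_l h_l=0$ and $\sum_l c_l k_l h_l=0$ among $h_l=T(\lambda_0)Vf_{k_l}$, conclude $\dim\operatorname{span}\{h_l\}\leq m-2$, and argue by dimension count that $\ker\Psi_p$ must contain a vector outside the one-dimensional kernel of the $f$-map. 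The two are equivalent: the paper's $g$ is precisely your choice $c_1'=0$, $c_l'=c_l(k_1-k_l)/(k_l-p)$ for $l\geq 2$. Your version makes it transparent \emph{why} such a combination exists rather than presenting the coefficients as given, at the cost of the extra lines needed to establish the rank bound, so both routes are reasonable and essentially interchangeable.
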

\begin{proof}  Since $f_{k_l}$ is in the image of the 
most singular part of $R_V$ at $e^{i\pi k_l}\lambda_0$, we have
$(-\Delta +V-\rho^2 e^{2i\varphi})f_{k_l}=0$.
Set $\phi_{k_l}=(-\Delta -\rho^2 e^{2i\varphi})f_{k_l}= -Vf_{k_l}.$  Since 
$\{f_{k_l}\}$ forms a linearly dependent set, so do $\{ \phi_{k_l}\}$.
Moreover, by unique continuation, none of the functions
$\phi_{k_l}$ are the zero function.
Since $R_V(\lambda)= R_0(\lambda) (I+VR_0(\lambda))^{-1}$
so that  $f_{k_l}$ is in the image of 
$R_0(\lambda_0 e^{i\pi k_l})$, the function
$\phi_{k_l}$ is in the null space of $I+VR_0(\lambda_0 e^{i\pi k_l})
= I+V(R_0(\lambda_0)-ik_l T(\lambda_0))$.
%Thus if $k_l=0$ for some $l$, we
%are done.  

%Suppose that $k_l\not = 0$ for $l=1,...,m$.  
By relabeling and decreasing $m$ if necessary, we
can assume that no proper subset of the $\{ \phi_{k_l}\}$ is linearly dependent.
Then there are nonzero constants $c_2,...,c_m$ so that 
$$\phi_{k_1}=\sum_{l=2}^m c_{l} \phi_{k_l}.$$
But 
\begin{align*}
0&= (I+VR_0(\lambda_0)-ik_1 V T(\lambda_0))\phi_{k_1} \\
&=  (I+VR_0(\lambda_0)-ik_1 V T(\lambda_0))\sum_{l=2}^m c_{l} \phi_{k_l} \\
&= -i VT(\lambda_0)\sum_{l=2}^m c_{l} (k_1-k_l)\phi_{k_l};
\end{align*}
that is, $\sum _{l=2}^m c_{l} (k_1-k_l)\phi_{k_l}$ is in the null space of 
$VT(\lambda_0)$.
If $p\in\{k_1,...,k_m\}$ there is nothing to prove.  So we 
assume $p \not \in\{k_1,...,k_m\}$.  
Then
\begin{align*}
 (I+VR_0(\lambda_0)-ip VT(\lambda_0))
 \sum_{l=2}^m c_{l} \frac{(k_1-k_l)}{k_l-p}\phi_{k_l}
& = i\sum_{l=2}^m c_{l} (k_1-k_l) VT(\lambda_0) \phi_{k_l}
\\ & = iVT(\lambda_0)\sum_{l=2}^m c_{l} (k_1-k_l)\phi_{k_l}
\\ & =0.
\end{align*}
By our assumption that no proper subset of $\{ \phi_{k_1},...,\phi_{k_m}\}$
is linearly dependent,  the  function 
$g= \sum_{l=2}^m c_{l} \frac{(k_1-k_l)}{k_l-p}\phi_{k_l}$ is nontrivial.

Hence $I+VR_0(e^{i p \pi}\lambda_0)$ has a nontrivial null space.
But it is well known that $R_V(\lambda)$ has a pole whenever
$I+VR_0(\lambda)$ has nontrivial null space.
\end{proof}

\begin{prop}\label{p:eigenvalue}
If the hypotheses of Lemma \ref{l:nullspace}  hold, then
$\varphi=\pi/2$ and $-\rho^2$ is an eigenvalue of $-\Delta+V$. 
\end{prop}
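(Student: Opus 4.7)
The plan is to apply the conclusion of Lemma \ref{l:nullspace} with $p=0$ to deduce that $\lambda_0$ itself is a pole of $R_V$, and then to use self-adjointness of $-\Delta+V$ to pin down where in $\Lambda$ such a pole can lie. Since $\varphi\in[0,\pi)$, the point $\lambda_0=\rho e^{i\varphi}$ lies in the closure of the physical sheet $\{0<\arg\lambda<\pi\}$ of $\Lambda$, and on the open physical sheet the meromorphic continuation of $R_V$ from $L^2_c(\Real^d)$ to $L^2_{\operatorname{loc}}(\Real^d)$ agrees with the usual self-adjoint resolvent $(-\Delta+V-\lambda^2)^{-1}:L^2\to L^2$, so the pole sets of the two coincide there.

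The main case is $0<\varphi<\pi$, in which $\lambda_0$ lies in the open physical sheet. A pole of the self-adjoint resolvent there forces $\lambda_0^2\in\sigma(-\Delta+V)$. Because $V$ is real, $-\Delta+V$ is self-adjoint and $\sigma(-\Delta+V)\subset\Real$, so $\lambda_0^2=\rho^2 e^{2i\varphi}$ must be real. Combined with $\varphi\in(0,\pi)$, this forces $\varphi=\pi/2$, hence $\lambda_0^2=-\rho^2<0$. Since $-\rho^2$ lies strictly below the essential spectrum $[0,\infty)$, it must be an eigenvalue of $-\Delta+V$, yielding both conclusions of the proposition at once.

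The remaining case $\varphi=0$ would place $\lambda_0=\rho>0$ on the positive real axis of the principal sheet. However, for $V\in L^\infty_c(\Real^d;\Real)$ the Kato--Agmon absence of positive eigenvalues of $-\Delta+V$, combined with the limiting absorption principle, shows that the meromorphically continued resolvent has no pole on the positive real axis of the principal sheet, contradicting the pole at $\lambda_0=\rho$ guaranteed by Lemma \ref{l:nullspace}; hence this case is impossible. I do not expect any serious obstacles here: the proof is essentially a bookkeeping exercise that rests on the standard fact that eigenvalues of a self-adjoint Schr\"odinger operator with real compactly supported potential account for all poles of the continued resolvent lying in the closed physical sheet.
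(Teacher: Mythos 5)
Your proof is correct and takes essentially the same approach as the paper: invoke Lemma \ref{l:nullspace} with $p=0$ to place a pole at $\lambda_0$ in the closed physical sheet, then use that poles of $R_V$ with $0\le\arg\lambda<\pi$ correspond to square roots of (real, non-positive) eigenvalues to force $\varphi=\pi/2$. The paper compresses the boundary case $\varphi=0$ into the blanket statement that the only poles in the closed physical sheet come from eigenvalues, whereas you spell it out via Kato's absence of positive eigenvalues and the limiting absorption principle — a slightly more explicit rendering of the same argument.
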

\begin{proof}
Suppose the hypotheses of Lemma \ref{l:nullspace} hold.  Then by Lemma 
\ref{l:nullspace} $R_V(\lambda)$ has a pole at
$\lambda_0$, and $\lambda_0$ lies in the closure of the  physical region,
more particularly in the region with $0\leq \arg \lambda <\pi$.
%Then
%since $I+VR_0(\lambda_0)$ has nontrivial null space 
%and $R_V=R_0(I+VR_0)^{-1}$, $\lambda_0$ is a pole of $R_V(\lambda)$.
But the only poles of $R_V(\lambda)$ with $0\leq\arg \lambda<\pi$ correspond
to the square roots of eigenvalues.  Since $V$ is real-valued, the eigenvalues of
$-\Delta +V$ are real, and $\varphi = \pi/2$.
\end{proof}

%\begin{lemma} Let $d$ be even and $V\in ^\infty_c(\Real^d;\Real)$.
%Suppose $-\rho^2$ is the smallest eigenvalue of $-\Delta +V$.
%Then if for distinct integers 
%$k=k_1,...,k_m$, the nontrivial functions $f_{k_l}$ are
%in the range of the singular part of $R_V$ at $\lambda_0 e^{i\pi k_l}$, with
%$(-\Delta +V-\rho^2 e^{2i\pi})f_{k_l}=0$, then $\{f_{k_1}, f_{k_2},...,f_{k_m}\}$
%are linearly dependent.
%\end{lemma}
%\begin{proof}

%\end{proof}

\vspace{2mm}
\noindent 
{\bf Acknowledgments.}  It is a pleasure to thank 
Rafael Benguria, Peter Hislop, 
Ant\^onio S\'a Barreto, and Maciej Zworski for helpful conversations.
The author is grateful to the Simons Foundation for its support through the Collaboration Grants for Mathematicians program.

\end{document}